\renewcommand\@makefnmark{%
  \hbox{\textsuperscript{\normalfont\color{red}\@thefnmark}}}
\providecommand\@dotsep{5}
\def\listtodoname{List of Todos}
\def\listoftodos{\@starttoc{tdo}\listtodoname}
\newcommand{\bookmarktodo}[2][]{\global\advance\count66 by 1
\footnote{\color{red}\textbf{#2}}%
  \addcontentsline{tdo}{todo}{\protect{
    {\color{red}[\the\count66]}\hskip 1em
    \parbox{14cm}{#2}%
  }}%
}
\def\@evenfoot{\rule{0pt}{20pt}[\myformat\today] \hfill [{\tt \jobname.tex}]}
\def\@oddfoot{\rule{0pt}{20pt}{[\tt \jobname.tex}]\hfill [\myformat\today]}
\newtheorem{theorem}{Theorem}
\newtheorem{corollary}[theorem]{Corollary}
\newtheorem{lemma}[theorem]{Lemma}
\newtheorem{proposition}[theorem]{Proposition}
\theoremstyle{definition}
\newtheorem{example}[theorem]{Example}
\newtheorem{remark}[theorem]{Remark}
\newtheorem{definition}[theorem]{Definition}
\def\Xrightarrow#1{\stackrel{#1}\longrightarrow}
\def\dec{{\rm dec}}
\def\Dec{\mathbb D}
\def\spicka{\raisebox{.8em}{\rotatebox{-90}{$\triangleright$}}}
\def\O{\tt O}
\def\Fib{\triangleright}
\def\Fin{\tt Fin}
\def\T{\mathbb{T}}
\def\Ar{\mathbb{A}}
\def\Com{\mathbb{C}}
\def\ee{\varepsilon}
\def\redukce#1{\vbox to .3em{\vss\hbox{#1}}}
\def\tecka{\, . \hspace {-.8em}}
\def\Xrightarrow#1{\stackrel {#1}\longrightarrow}
\def\dminus{{d^\Ar_{-1}}}
\def\sminus{{s^\Ar_{-1}}}
\def\Acoalg{\Cat_\Ar}
\def\oA{{\overline {\Ar}}}
\def\S{\between}
\def\Rada#1#2#3{#1_{#2},\dots,#1_{#3}}
\def\rada#1#2{{#1,\ldots,#2}}
\def\ttN{{\mathbb N}}
\def\ttNw{{\mathbb N}^w}
\def\ttNW{{\widetilde{\mathbb N}}^w}
\def\ttn{{\tt n}}
\def\id{{\mathbb 1}}
\def\F{{\EuScript F}}
\newcommand{\C}{{\tt C}}
\newcommand{\D}{\tt{D}}
\newcommand{\Cat}{{\tt Cat}}
\newcommand{\uu}{\mathbb{1}}
\newcommand{\NN}{\mathbb{N}}
\newcommand{\K}{\EuScript{K}}
\newcommand{\x}{\times}
\renewcommand{\epsilon}{\varepsilon}
\renewcommand{\phi}{\varphi}
\renewcommand{\rho}{\varrho}
\newcommand{\jednajedna}{\mathbf{1}}
\newcommand{\dvedve}{\mathbf{2}}
\title[Kernels, algebras, d\'ecalage, supercoherence]
{Kernels, lax algebras, d\'ecalage, and supercoherence}
\author[M.\ Markl, D.\ Trnka]{Martin Markl and Dominik Trnka}
\address{Martin~Markl: Institute of Mathematics, The Czech
  Academy of Sciences, {\v Z}itn{\'a} 25, 110 00 Praha 1, The Czech Republic}
\address{Dominik~Trnka: Institute of Mathematics, University of Technology,  Technick\'a 2896, 616 69 Brno, The Czech Republic}
\email{markl@math.cas.cz; trnka@fme.vutbr.cz}
\subjclass[2010]{Primary 18C15, 18N50, 18M60}
\keywords{kernels, supercoherence, lax algebra, operadic category.}
\begin{document}

\baselineskip 17pt plus 2pt minus 1pt

\begin{abstract}
We prove that a pointed category has kernels if and only if it is a
lax algebra for the arrow $2$-monad, and that this holds if and only
if it is the d\'ecalage of a supercoherent structure. We will then
interpret categories with kernels as the sought-after weak version of
unary operadic categories.
\end{abstract}

\thanks{M.M. supported by  RVO: 67985840. D.T. supported by FSI-S-26-8958.}
\maketitle

\tableofcontents

\section*{Introduction}

A {\/\em   kernel\/} of a morphism $f: a \to b$ 
of a pointed category is a morphism
$\kappa :k \to a$ such that the composition $f \kappa : k \to b$ is a
null morphism, and such that $\kappa$ is terminal among all morphisms
with this property.
A category with kernels is a category in which every morphism has a
kernel. Examples are abundant: each Abelian category has kernels as
does the category of groups, \&c. 

We first prove that pointed categories are
coalgebras for the $2$-comonad $\Ar$ that sends a category~$\C$ to
its category of arrows  $\Ar(\C)$. 
We then identify categories with kernels with  lax 
algebras for the induced $2$-monad $\overline \Ar$ on the
category of $\Ar$-coalgebras.

John F.~Jardine, in his remarkable 1991 article~\cite{Jardine},
introduced supercoherent structures as a special kind of lax simplicial objects
$X_\bullet$ in the category $\Cat$ of categories. Our second main
result states that a pointed category $\C$ is a category with kernels
if and only if its nerve $\ttN_\bullet(\C)$ is the d\' ecalage
$\dec_\bullet(X)$ of a supercoherent structure $X_\bullet$. Categories with
kernels thus provide another example of supercoherence, 
alongside Jardine's classical example of the bar construction of a 
monoidal~category. 

Leaving out the details, we therefore proved that, for a pointed
category $\C$, the following are equivalent:
\hfill\break 
\noindent
\hphantom{1em} -- the category $\C$ has kernels,
\hfill\break 
\noindent 
\hphantom{1em} --  the category $\C$ is a lax algebra for the arrow $2$-monad $\overline \Ar$, and
\hfill\break 
\noindent 
\hphantom{1em} -- the nerve of  the category 
$\C$ is a d\'ecalage of a supercoherent structure.
\hfill\break \noindent 
The above equivalences combine
Theorems~\ref{theorem:kernels} and~\ref{Jen aby si
  stihla vyridit pas.}, which together constitute the  main result of this work. 

\vskip .05em
\noindent 
{\bf Motivations and inspirations.}
Operadic categories, introduced in~\cite{duo}, are categories in
which every morphism has fibers. 
In unary operadic categories, such a fiber is unique.  From a
conceptual perspective, Abelian categories should provide examples, 
with fibers given by
kernels. However, such a structure does not satisfy
the main axiom of operadic categories, which requires that the kernel
of the induced morphism between the kernels should {\em coincide\/} with 
the kernel of the original morphism;  
in general Abelian categories, these kernels are only
isomorphic. 
This fact, together with standard categorical objection to strict 
equalities, 
suggested the need for a weak version of operadic categories.
Further discussion of this perspective can
be found in Section~\ref{Jarka pojede do Kolumbie, ale Nove Eldorado
  je legenda.}. 

Operadic categories and their relation to other structures have been
the subject of several works. Particularly important for us
was~\cite{GarnerKockWeber}, which brought into focus the d\'ecalage comonad
$\Dec : \Cat \to \Cat$. The authors proved that unary operadic
categories are algebras for the induced monad on the category of
\,$\Dec$-coalgebras. Another source of our inspiration was the
characterization of unary operadic categories as categories whose
nerve is the upper d\'ecalage of a simplicial set; see, for
example,~\cite{blob}.

Our strategy, therefore, was to {\em define} \/ weak operadic
categories either as lax algebras for the induced d\'ecalage monad
$\Dec$, or alternatively as the d\'ecalage of a lax version of a
simplicial set. The difficulty with the former approach is that $\Dec$
is not a $2$-(co)monad, so the notion of lax algebras does not
make sense. It must be replaced by the arrow (co)monad $\Ar$, which is,
ideologically, a \hbox{$2$-completion} of $\Dec$.  As for the latter
approach, replacing simplicial sets by Jardine’s supercoherent
structure requires interpreting the nerve $\ttN_\bullet(\C)$ as a
simplicial object in $\Cat$ rather than as a simplicial set.

This shift in perspective to the $2$-categorical level dramatically
changed the picture. It turned out that the fiber of a morphism in
a ‘weak’ unary operadic category, defined along the above lines, is
automatically equipped with a morphism to its domain, so it
resembles a kernel rather than a fiber. Remarkably, the universal
property of kernels is also satisfied, leading us to the realization that a weak
unary operadic category is in fact a category with kernels! More
details can be found in Section~\ref{Jarka pojede do Kolumbie, ale
  Nove Eldorado je legenda.}.

Our ultimate aim is to formulate a bivariant version of the theory
that accommodates both kernels and cokernels and, together with
additivity, characterizes abelian categories as lax bialgebras for a
$2$-bimonad, or equivalently as a double---upper and lower---d\'ecalage of
a supercoherent structure. This will be the subject of future
work. The first steps along this path have already been taken
in~\cite{bivar}.

\vskip .5em
\noindent 
{\bf Categorical context.}
While in the present article we start with the {\em comonad}
structure of the arrow category functor~$\Ar$ and then work with the
induced monad $\overline{\Ar}$ on the category of $\Ar$-co\-al\-geb\-ras, 
a somewhat dual approach,
based on a natural {\em monad} structure on $\Ar$  was studied 
in~\cite{Tholen}, where $\Ar$-algebras were identified 
with orthogonal factorization systems. 
We acknowledge that Section~\ref{Bures vladne vsem.} 
of this article was inspired by the methods of that paper.

Our work touches on the realm of
property-like structures and (co)lax-idempotent 2-monads, as discussed
for instance in \cite{KellyLack}. A 2-monad $\T$ with unit $\eta$ is
lax-idempotent if, for every algebra
$a\colon \T(A)\to A $, there exists a 2-cell
$\theta \colon {\rm id}_{\T (A)}\Rightarrow \eta_A\circ a$ such that
$({\rm id},\theta)$ is the counit-unit pair  of an adjunction
$a\dashv \eta_A$.  Equivalently,  
given two \hbox{$\T$-algebras $a\colon \T (A )\to A$},
$b\colon \T (B )\to B$ and a morphism $f\colon A \to B$, there exists a
unique 2-cell $\overline{f}$ extending $f$ to a lax morphism of
$\T$-algebras; see \cite[Theorem~6.2]{KellyLack}. In particular, any two
$\T$-algebra structures $a', a''\colon \T A\to A$ on the same object
$A$ are related by a unique $\T$-algebra isomorphism, so algebras for
a lax-idempotent \hbox{$2$-monad} on $\Cat$ are typically categories equipped
with structure satisfying a specified universal property.  A paradigmatic
examples are $2$-monads $\T$ that assigns to each category $\C$ its
free cocompletion under a given class of colimits. A $\T$-algebra is
then a category admitting all such colimits. As we show, for a pointed
category $\C$, an $\overline{\Ar}$-algebra structure corresponds to a
choice of a kernel for each morphism in $\C$. By the universal
property of kernels, any two choices are isomorphic, and
hence so are the corresponding 
algebra~structures.

\vskip .5em
\noindent {\bf Plan of the paper.}
In Section~\ref{Bures vladne vsem.} we introduce the arrow $2$-comonad
$\Ar \colon \Cat \to \Cat$ and analyze its coalgebras. In
Section~\ref{Dominik prileti v pondeli.} we then explicitly describe lax
  algebras for the $2$-monad  $\overline \Ar$ induced by $\Ar$ on the
  category of its coalgebras. Section~\ref{Zacina Zimni skola. Idealni
    snehove podminky ale ja trcim v Bonnu!} contains the first main
  result of the paper, Theorem~\ref{theorem:kernels}, which characterizes
  kernels as normalized lax algebras for the arrow monad $\overline \Ar$.
Section~\ref{Jarka upekla skvely jablecny zavin.} contains auxiliary
material relating the combinatorics of the nerve to the arrow
category; this will be used in Section~\ref{Na tu novou vladu opravdu
  nemam nerv ani jako kategorie.} to describe the nerve of lax algebras.
Section~\ref{Jarka si koupila novy mobil.} contains the second main
result, Theorem~\ref{Jen aby si stihla vyridit pas.}, which
characterizes 
kernels via the d\'ecalage of a
supercoherent structure. The final Section~\ref{Jarka pojede do
  Kolumbie, ale Nove Eldorado je legenda.} interprets categories
with kernels as weak unary operadic~categories. 

\vskip .5em
\noindent {\bf Conventions.}
We will assume that all categories are small relative to a
sufficiently large ambient universe. We will also assume the axiom of
choice, so that we need not distinguish between categories with
kernels and categories with {\em algebraic} kernels,
i.e.~categories in which each morphism has a {\em chosen}
kernel.

\vskip .5em
\noindent {\bf Acknowledgment.}  Both authors are
grateful to the Max-Planck-Institut f\"ur Mathematik in Bonn for its
hospitality and financial support during the last stages of the work.

\section{The arrow comonad}
\label{Bures vladne vsem.}

In the rest of the paper, the symbol $\id$, with or without
subscript, will denote---depending on the context---the identity
endomorphism, functor or transformation.
Let $\jednajedna$ be the terminal category, and denote its unique
object by $0$.  Let $\dvedve$ be the category with two objects $0$,
$1$ and one non-identity morphism $a\colon 0\to 1$. The category
$\dvedve\x\dvedve$ consists of one commutative square
\begin{equation}\label{equation:4-square}
    \raisebox{-1.8em}{$\dvedve\x\dvedve=$ \ }
\xymatrix{(00)\ar[r]^{a1} \ar[d]_{0a}  &(10)  \ar[d]^{1a} 
\\
(01)\ar[r]^{a0} & \, (11).
}
\end{equation}

Denote by $m\colon\dvedve\x\dvedve\to\dvedve$ the functor which sends an
object $(ij) \in \dvedve\x\dvedve$ 
to $s(ij)=\max \{i,j\}$; its value on morphisms is thus
determined. Explicitly, $m(01) = m(10) = m(11) = 1$, $m(00) = 0$, and
$m(a1) = 1 = m(1a)$, $m(a0) = a = m(0a)$.
Consider also the functor $e\colon \jednajedna \to \dvedve$ with
$e(0)=0$.  Assuming the canonical isomorphisms
$(\dvedve\x\dvedve)\x\dvedve\cong \dvedve\x(\dvedve\x\dvedve)$ and
$\dvedve\x \jednajedna\cong\dvedve\cong \jednajedna\x\dvedve$, the diagrams
\[
\xymatrix@C=3em{\dvedve  \x \dvedve  \x \dvedve \ar[r]^{\dvedve \x m}     
\ar[d]_{m \x \dvedve}
&\dvedve  \x \dvedve  \ar[d]^m
\\
\dvedve  \x \dvedve \ar[r]^m   &\dvedve
}
\hspace{2em}
\xymatrix@C=3em{\dvedve  \ar[r]^(.4){\dvedve \x e} \ar[rd]_{\id}  &   
\dvedve \x \dvedve \ar[d]^m
\\
&\dvedve
}
\hspace{2em}
\xymatrix@C=3em{\dvedve  \ar[r]^(.4){e \x \dvedve} \ar[rd]_{\id}  &   
\dvedve \x \dvedve \ar[d]^m
\\
&\dvedve
} 
\]
commute. Hence
$(\dvedve,m,e)$ is a monoid in~$(\Cat,\x,\jednajedna)$. Denote by
$\tau\colon \dvedve\x\dvedve\cong\dvedve\x\dvedve$ the symmetry
isomorphism of the cartesian product in $\Cat$. The equality
$\max \{i,j\}=\max \{j,i\}$ 
translates to 
\begin{equation}
\label{Predvcerem jsem dojel do Bonnu.}
m\tau=m,
\end{equation}
so $\dvedve$ is a strict symmetric, 
strictly associative monoidal category. The monoidal structure of $\dvedve$ leads to a comonad $\Ar $ on $\Cat$,
with
\begin{equation}
\label{Ztratil jsem klicek od kola - porad neco ztracim.}
\Ar (\C):=[\dvedve\to\C], \ \C \in \Cat,
\end{equation}
the category of functors and natural
transformations.

Let us describe the arrow category 2-functor $\Ar \colon \Cat \to \Cat$
in more detail. The objects of $\Ar (\C)$ are morphisms of $\C$ and the
morphisms of $\Ar (\C)$ are commutative squares
in~$\C$. A~morphism 
\[
S=(h_0,h_1)\colon f\longrightarrow g
\] 
in $\Ar (\C)$ will be
depicted as the square
\begin{equation}\label{equation:square}  
    \raisebox{-1.8em}{$S\ =$ \ }
\xymatrix{a \ar[r]^{h_0} \ar[d]_{f}  & c \ar[d]^{g}
\\
b  \ar[r]^{h_1} & d.
}
\end{equation}
By convention, the morphisms in $\Ar (\C)$ always go 
from the left edge of a square to the right edge.
For a functor $F\colon \C\to \D$, the value
$\Ar  (F)\colon \Ar (\C) \to \Ar (\mathtt{D})$ 
is given by post-composition with $F$, that is 
\begin{align*}
\Ar  (F)(f\colon a\to b)&=F(f)\colon F(a) \longrightarrow F(b),
\\
\Ar  (F)\big((h_0,h_1)\colon f\to g\big)&=
\big(F(h_0), F(h_1)\big)\colon F(f)\longrightarrow F(g).
\end{align*}
Finally, given a natural 
transformation $\omega\colon F\Rightarrow G$, the transformation 
$\Ar  (\omega)\colon \Ar  (F) \Rightarrow \Ar  (G)$ has components 
\begin{equation}
    \label{Na koncert jede cely sbor vlakem}
\Ar (\omega)_{f}=(\omega_a,\omega_b)\colon F(f)\longrightarrow G(f).
\end{equation}

The comonad structure of $\Ar $ is given by precompositions with $m$
and $e$, using the natural isomorphisms
\[
[\dvedve\x\dvedve\to\C] \cong \big[\dvedve\to[\dvedve\to\C]\big] = \Ar
^2\C, \hbox { and }\
[\jednajedna \to \C]\cong \C. \]
Let us describe the comonadic comultiplication 
$\delta_\C := m^*_\C\colon \Ar \C\to \Ar ^2\C$ explicitly.
Firstly, 
\[
\raisebox{-1.8em}{$\delta_\C (f\colon a\to b)\ =$ \ }
\xymatrix{a \ar[r]^f \ar[d]_f  & b \ar[d]^\id
\\
b  \ar[r]^\id & b
}
\]
and, for a morphism $S = (h_0,h_1)\colon f \to g$ in $\Ar(\C)$ 
as in \eqref{equation:square}, we have
\[
\raisebox{-4.8em}{$\delta_\C (S) \ =$ \ }
\xymatrix@C=2.3em@R=2em{
& b  \ar[dd]|\hole^(.75){\id} \ar[rr]^{h_1}  &&d  \ar[dd]^{\id}
\\
a  \ar[ur]^{f}\ar[dd]_{f} \ar[rr]^(.7){h_0} &&c
\ar[dd]^(.2){g} 
\ar[ur]^g
\\
&b  \ar[rr]|\hole^(.3){h_1}&&d
\\
b \ar[ur]^\id \ar[rr]^(.4){h_1}&&
d \ar[ur]^\id
}
\]
Our convention is that the morphisms in $\
\Ar ^2(\C)$ always go from the left face of the cube to the right
face. 
The counit $\epsilon_\C:= e^*_\C \colon \C \to \Ar (\C)$ 
sends $f \colon a\to b \in \Ar (\C)$ to $a$, and a
morphism $S$ as in~\eqref{equation:square} to $h_0$.

By definition, an $\Ar $-coalgebra is a category $\C$ with a 
functor $\beta\colon \C\to \Ar (\C)$ that satisfies
\begin{subequations} 
\begin{equation}
\label{equation:coalgebra_counit}
 \epsilon_\C \circ \beta=\uu   
\end{equation} 
and 
\begin{equation}\label{equation:coalgebra_comultiplication}
\Ar (\beta)\circ \beta=\delta_\C\circ \beta.
\end{equation}
\end{subequations}
In elementary terms, the functor $\beta$ equips $\C$ with functorial
choices of commutative squares
\[
\raisebox{-2em}{$\beta_f = \hskip 1em $}
\xymatrix@C=3em{
\beta^0_a \ar[d]_{\beta^0_f} \ar[r]^{\beta_a}   & \beta^1_a  \ar[d]^{\beta^1_f} 
\\
\beta^0_b  \ar[r]^{\beta_b}  & \beta^1_b
}
\]
which are
specified for any $f\colon a\to b$ in $\C$.  Equation
\eqref{equation:coalgebra_counit} gives $\beta_f^0=f$. If we denote
$\beta_f^1 :\beta_a^1 \to \beta_b^1$ by
$u_f: u_a\to u_b$, the square $\beta_f$ assumes the
form
\[
\raisebox{-2em}{$\beta_f = \hskip 1em $}
\xymatrix@C=3em{
a \ar[d]_{f} \ar[r]^{\beta_a}   & u_a  \ar[d]^{u_f} 
\\
b  \ar[r]^{\beta_b}  &\ u_b .
}
\]
Equation \eqref{equation:coalgebra_comultiplication} evaluated on
a morphism $f\colon a\to b \in \C$ gives the equality of cubes
\[
\xymatrix@C=2.3em@R=2em{
&u_a \ar[dd]|\hole_(.75){\beta_{u_a}} \ar[rr]^{u_f}  &&u_b  \ar[dd]^{\beta_{u_b}}
\\
a  \ar[ur]^{\beta_a}\ar[dd]_{\beta_a} \ar[rr]^(.7){f} &&b
\ar[dd]^(.2){\beta_b} 
\ar[ur]^(.4){\beta_b}
\\
&u_{u_a}  \ar[rr]|\hole^(.3){u_{u_f}}&&u_{u_b}
\\
u_a \ar[ur]^(.55){u_{\beta_a}} \ar[rr]^(.55){u_f}&&
u_b \ar[ur]^{u_{\beta_b}}
} 
\hskip 2em
\raisebox{-4.5em}{$=$} \hskip 2em
\xymatrix@C=2.3em@R=2em{
&u_a \ar[dd]|\hole_(.75){\id} \ar[rr]^{u_f}  &&u_b  \ar[dd]^{\id}
\\
a  \ar[ur]^{\beta_a}\ar[dd]_{\beta_a} \ar[rr]^(.7){f} &&b
\ar[dd]^(.2){\beta_b} 
\ar[ur]^(.4){\beta_b}
\\
&u_a  \ar[rr]|\hole^(.3){u_f}&&u_b
\\
u_a \ar[ur]^{\id} \ar[rr]^(.55){u_f}&&
u_b \ar[ur]^{\id}
}
\]
which implies that $u_{\beta_a} = \beta_{u_a} = \id_{u_a}$, and thus
$u_{u_a} = u_a$ for an arbitrary object $a$ of $\C$, and
that $u_{u_f}={u_f}$  for any morphism $f$ of $\C$.

The above equations say that an $\Ar $-coalgebra is an idempotent
functor 
$u\colon \C \to \C$
together with a natural transformation
$\beta\colon \id_\C \Rightarrow u$, such that the equality of
transformations $u\beta=\beta u=\uu_u$ holds. This means that
$(u,\uu\colon u^2=u,\beta\colon \uu \Rightarrow u)$ is a monad on
$\C$.
\begin{example}
\label{example:pointed}
Any category with a terminal object $\texttt{t}$ is an $\Ar $-coalgebra
with $\beta_a:=!_a$, the unique map from $a$ to
$\texttt{t}$. Different choices of a (necessarily isomorphic)
terminal object give different, but isomorphic, coalgebra structures.
\end{example}

\section{From coalgebras to algebras}
\label{Dominik prileti v pondeli.}

Any comonad $(\Com,\delta,\ee)$ on $\Cat$ induces
a monad $\overline{\Com}$ on the category $\Cat_\Com$ of $\Com$-coalgebras,
which is the monad generated by the forgetful-cofree
adjunction
\[
\rule{0em}{1.5em}
\xymatrix@1@C=1em{     *{ \quad \ \  \quad \Cat_\Com\ } 
\ar@(dl,ul)[]^{\overline \Com}\ \ar@/^1em/[rr]&\bot \hskip .8em & *{\
\raisebox{.1em}{\Cat}  \raisebox{-.1em}{\rule{0em}{1.2em}}}
\ar@/^1em/[ll]}\ .
\]
Given a $\Com$-coalgebra $(\C,\beta_\C)$, the value of the induced monad
$\overline{\Com}$ on $\C$ is the cofree coalgebra $(\Com(\C),c_\C)$ with the
commultiplication $c_\C : = \delta_\C : \Com(\C) \to \Com^2(\C)$. The
components of the multiplication $\mu_\C$ and the unit 
$\eta_\C$ of the monad $\overline{\Com}$ are given by
$\mu_\C=\Com\ee_\C$ and $\eta_\C=\beta_\C$, $\C \in \Cat_\Com$. 
Indeed, $\mu_\C : \Com^2(\C) \to \Com(\C)$ and 
$\beta_\C\colon(\C,\beta_\C) \to (\Com(\C),c_\C)$ are morphisms 
of coalgebras,
and the unitality and associativity of $(\overline{\Com},\mu,\eta)$
follows from the properties of the comonad $\Com$ and coalgebra
structure maps $\beta_\C$.

We will apply this construction to the arrow comonad $\Ar$ introduced in
Section~\ref{Bures vladne vsem.}. 
The induced monad on the category of 
$\Ar$-coalgebras will be denoted by~$\overline{\Ar}$. It is, in fact, a
2-monad in the $2$-category $\Cat$, 
hence we can consider its normalized lax algebras which we
recall from \cite[Chapter~6]{Bourke}. `Lax normalized' means that the algebra
structure map $\K\colon \Ar(\C) \to \C$ interacts with the monad
multiplication via a
coherent natural transformation (the laxness) and the composite $\K\circ
\beta_\C\colon\C\to\C$ is strictly the identity on $\C$ (the
normalization). Here is the precise

\begin{definition}
\label{definition:lax_algebra}
Let $(\T,\mu,\eta)$ be a 2-monad on $\Cat$. A normalized lax
$\T$-algebra is a category~$\C$ together with a functor
$a\colon \T(\C)\to\C$ and a natural transformation $\phi\colon a \circ\T a
\Longrightarrow a\circ \mu_\C$ in
\begin{equation}
\label{V Mercine bude pekna zima.}
\xymatrix@R=2.5em@C=2.5em{
\T^2(\C) \ar[r]^{\T a} \ar[d]_{\mu_\C} 
& \T(\C) \ar[d]^a \ar@{=>}[dl]|{\phi\rule{.1em}{0pt}}
\\
\T(\C) \ar[r]^a & \C
}
\end{equation}
satisfying the following four conditions:

\begin{itemize}
\item[(i)]
$a \circ \eta_\C=\uu_\C$,
\item[(ii)] 
there is an equality of 2-cells
\[
\xymatrix@R=1.2em@C=1.7em{
\T^3(\C) \ar[rr]^{\T^2 a}\ar[dd]_{\mu_{\T(\C)}} && \T^2(\C) \ar[rd]^{\T a} \ar[dd]_{\mu_\C}
\\
&&& \T(\C) \ar[dd]^a \ar@{=>}[ld]|{\phi\rule{.1em}{0pt}}
\\
\T^2(\C)  \ar[rr]^{\T a} \ar[rd]_{\mu_\C} && \T(\C)
\ar@{=>}[ld]|{\phi\rule{.1em}{0pt}}
\ar[dr]^a
\\
&\T(\C) \ar[rr]^a && \C
}
\hskip 1em
\raisebox{-3em}{$=$} \hskip 1em
\xymatrix@R=1.2em@C=1.7em{
\T^3(\C) \ar[rr]^{\T^2 a}\ar[dd]_{\mu_{\T(\C)}} \ar[rd]^(.6){\T\mu_\C}   && 
\T^2(\C) \ar[rd]^{\T a} \ar@{=>}[ld]|(.45){\T{\phi\rule{.1em}{0pt}}}
\\
&\T^2(\C)  \ar[rr]^{\T a}   \ar[dd]_{\mu_\C}&& \T(\C) \ar[dd]^a 
\ar@{=>}[lldd]|{\phi\rule{.1em}{0pt}}
\\
\T^2(\C) \ar[rd]_{\mu_\C} &&
\\
&\T(\C) \ar[rr]^a && \C,
}
\]
\item[(iii)]
$\phi \cdot \T\eta_\C=\uu_a$, and
\item[(iv)]
$\phi \cdot \eta_{\T(\C)}=\uu_a$.
\end{itemize}
\end{definition}

Let again $(\Com,\delta,\ee)$ be a $2$-comonad 
and $\overline{\Com}=(\Com,\Com \ee,\beta)$ the
induced 2-monad on the category of $\Com$-coalgebras explicitly
described at the beginning of this section. 
Fix a $\Com$-coalgebra $\C$ with the structure map $\beta_\C : \C \to
\Com(\C)$ and recall that the structure map of the free
$\Com$-coalgebra on~$\C$ is $\delta_\C: \Com(\C) \to \Com^2(\C)$. 
We are going to describe, in
Theorem~\ref{theorem:big} below, a
useful way leading to a normalized lax algebra structures on $\C$.

Let $K\colon \Com(\C) \to \C$ be a morphism of $\Com$-coalgebras, i.e.\ a functor
satisfying  $\Com K\circ \delta_\C=\beta_\C\circ K$. Suppose also 
that $K\circ
\beta_\C = \uu_\C$ and assume the existence of a natural
transformation \hbox{$\nu\colon \uu_{\Com(\C)}\Rightarrow \beta_\C \circ
  \ee_\C$}. We will consider composed natural transformations 
\begin{equation}
\label{equation:rho_and_phi}
\kappa:=(K\cdot\nu)\colon
  K\Longrightarrow \ee_\C\ \text{ and }\ \phi:=(K\cdot \Com\kappa)\colon K\circ \Com K
  \Longrightarrow K\circ \Com \ee_\C.
\end{equation}

\begin{lemma}
\label{lemma:small_coherence}
We have
\begin{equation}
\label{equation:coherence_of_nu}
(\kappa\cdot \ee_{\Com(\C)})\square(\kappa\cdot \Com K)=(\kappa\cdot \Com \ee_\C)
\square (K\cdot \Com\kappa).
\end{equation}
In the above equation, as before, $\cdot$ denotes horizontal
composition and\hphantom{k}$\square$ \ 
vertical composition of transformations and functors.
\end{lemma}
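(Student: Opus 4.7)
The plan is to recognize both sides of \eqref{equation:coherence_of_nu} as two expansions of one and the same horizontal composite $\kappa\cdot \Com\kappa\colon K\circ \Com K\Rightarrow \ee_\C\circ \Com\ee_\C$ via the middle-four interchange law, after a single application of 2-naturality. Since $\kappa$ is a genuine 2-cell in the 2-category $\Cat$ and $\Com$ is assumed to be a 2-comonad, these two tools are available.

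The first step is to unpack types. The 2-cell $\kappa\cdot \Com K$ has codomain $\ee_\C\circ \Com K$, which by 1-naturality of $\ee$ at the functor $K$ coincides with $K\circ \ee_{\Com(\C)}$, the domain of $\kappa\cdot \ee_{\Com(\C)}$; thus the vertical composition on the LHS is well-defined, and lands in $\ee_\C\circ \ee_{\Com(\C)} = \ee_\C\circ \Com\ee_\C$ (using 1-naturality of $\ee$ once more, now at $\ee_\C$). The RHS is manifestly a 2-cell $K\circ \Com K\Rightarrow \ee_\C\circ \Com\ee_\C$ as well, so both sides share the same source and target.

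Next, I would invoke 2-naturality of the counit $\ee\colon \Com\Rightarrow \id$ applied to the 2-cell $\kappa\colon K\Rightarrow \ee_\C$, which yields
\[
\kappa\cdot \ee_{\Com(\C)}\ =\ \ee_\C\cdot \Com\kappa.
\]
Substituting this into the LHS converts it into $(\ee_\C\cdot \Com\kappa)\square(\kappa\cdot \Com K)$. By the middle-four interchange law applied to the 2-cells $\kappa$ (on $\Com(\C)\to\C$) and $\Com\kappa$ (on $\Com^2(\C)\to \Com(\C)$), both this expression and the RHS $(\kappa\cdot \Com\ee_\C)\square(K\cdot \Com\kappa)$ compute the horizontal composite $\kappa\cdot \Com\kappa$; the desired equality follows.

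The only real obstacle is bookkeeping: keeping track of which occurrences of $\cdot$ are whiskerings of a 2-cell with an identity 2-cell and which are genuine horizontal compositions of two 2-cells, and correctly interpreting $\Com\kappa$ as the image of $\kappa$ under the 2-functor $\Com$. Once the types are sorted out, the identity becomes a formal consequence of 2-naturality together with the interchange law, and no honest computation is required. Notably, the explicit formula $\kappa=K\cdot \nu$ is not used in the argument — it suffices that $\kappa$ is a 2-cell in $\Cat$ with codomain $\ee_\C$.
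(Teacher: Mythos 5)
Your proposal is correct and takes essentially the same route as the paper: the paper also uses the $2$-naturality of the counit $\ee$ to identify $\kappa\cdot\ee_{\Com(\C)}$ with $\ee_\C\cdot\Com\kappa$, and then evaluates at an object $z\in\Com^2(\C)$ to recognize the resulting square as the naturality square of $\kappa$ at the morphism $\Com\kappa(z)$. That pointwise naturality square is exactly the componentwise content of your middle-four interchange step exhibiting both sides as expansions of $\kappa\cdot\Com\kappa$, so the two arguments differ only in phrasing (global versus pointwise).
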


\begin{proof}
The naturality of a transformation
$\alpha\colon F\Rightarrow G$ means that, for each 
morphism $f\colon x\to y$, the diagram
\begin{subequations}
\begin{equation}
\label{Za tyden jedu na Vanoce do Prahy.}
\xymatrix{F(x)\ar[d]_{F(f)}  \ar[r]^{\alpha_x}   & G(x)\ar[d]^{G(f)}
\\
F(y)  \ar[r]^{\alpha_y}&G(y)
}
\end{equation}
commutes.
Evaluating equation \eqref{equation:coherence_of_nu} 
on an object $z \in \Com^2(\C)$ leads to the diagram
\begin{equation}
\label{Vcera jsem byl na koncerte s Matyasem Kreckem.}
\xymatrix@C=4.5em{
K\Com K(z)  \ar[r]^(.37){(\kappa \cdot \Com K)(z)} \ar[d]_{(K \cdot \Com\kappa)(z)}  
& \ee_\C \Com K(z) =K\ee_{\Com(\C)}(z)
\ar[d]^{(\kappa\cdot \ee_{\Com(\C)})(z) =(\ee_\C\cdot \Com\kappa)(z)}
\\
K\Com \ee_\C(z) 
\ar[r]^(.37){(\kappa \cdot \Com \ee_\C)(z)}   &   
\ee_\C \Com \ee_\C(z) =   \ee_\C \ee_{\Com(\C)}(z)
}
\end{equation}
\end{subequations}
where the equalities follow from the $2$-naturality of the counit
$\ee\colon \Com \to \id$.  Taking~(\ref{Za tyden jedu na Vanoce do Prahy.})
with $F=K$, $G=\ee_\C$, $x=\Com K(z)$, $y=\Com \ee_\C(z)$, $f=\Com\kappa(z)$
and $\alpha=\kappa$,
we see that~(\ref{Vcera jsem byl na koncerte s Matyasem Kreckem.}) 
expresses the naturality of~$\kappa$. Since the object $z$ was arbitrary,
equality~\eqref{equation:coherence_of_nu} holds.
\end{proof}

\begin{theorem}
\label{theorem:big}
Let $(\Com,\delta,\ee)$ be a 2-comonad and
$\overline{\Com}=(\Com,\Com \ee,\beta)$ the induced 2-monad on the
category of \/ $\Com$-coalgebras.  Let $\beta_\C\colon \C\to \Com(\C)$ be
a coalgebra structure on a category $\C$ and
$\nu\colon \uu_{\Com(\C)}\Rightarrow \beta_\C \circ \ee_\C$ a natural
transformation such that $\ee_\C\cdot\nu=\uu_{\ee_\C}$, and
$\nu\cdot\beta_\C=\uu_{\beta_\C}$.  Finally, let
$K\colon \Com(\C) \to \C$ be a morphism of \/ $\Com$-coalgebras such
that $K\circ\beta_\C=\uu_\C$, and $\phi$ be as in
\eqref{equation:rho_and_phi}.  If $\phi\cdot \delta_\C = \uu_K$, then $(\C,K,\phi)$ is a normalized lax $\overline{\Com}$-algebra. Moreover, the coherence $\phi$ in any normalized lax $\overline{\Com}$-algebra $(\C,K,\phi)$ is uniquely determined by $K$ and $\nu$.
\end{theorem}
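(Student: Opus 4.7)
My plan is to verify the four conditions of Definition~\ref{definition:lax_algebra} for the triple $(\C,K,\phi)$ and then to establish the uniqueness clause. Recall that the unit of the induced monad $\overline{\Com}$ at $(\C,\beta_\C)$ is $\eta_\C=\beta_\C$, at the cofree coalgebra $(\Com(\C),\delta_\C)$ it is $\delta_\C$, and the multiplication at $(\C,\beta_\C)$ is $\mu_\C=\Com\ee_\C$. With these identifications, condition (i) is exactly the assumption $K\circ\beta_\C=\uu_\C$, and condition (iv) is the standing hypothesis $\phi\cdot\delta_\C=\uu_K$. For condition (iii), unfolding $\phi=K\cdot\Com(K\cdot\nu)$ gives $\phi\cdot\Com\beta_\C = K\cdot\Com\bigl(K\cdot(\nu\cdot\beta_\C)\bigr)$, which collapses to $\uu_K$ via $\nu\cdot\beta_\C=\uu_{\beta_\C}$ and $K\circ\beta_\C=\uu_\C$.

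The substantive axiom is the associativity pentagon (ii), and this is where I expect the main work to lie. Using 2-naturality of $\ee$, namely $\ee_\C\circ\Com K=K\circ\ee_{\Com(\C)}$, to match intermediate functors, and the 2-functoriality of $\Com$ to pull $K\cdot\Com(-)$ outside, I would rewrite the left-hand pasting of (ii) as $K\cdot\Com\bigl((\kappa\cdot\ee_{\Com(\C)})\square(\kappa\cdot\Com K)\bigr)$ and the right-hand pasting as $K\cdot\Com\bigl((\kappa\cdot\Com\ee_\C)\square(K\cdot\Com\kappa)\bigr)$. Lemma~\ref{lemma:small_coherence} identifies the two bracketed 2-cells, so (ii) follows. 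The main obstacle is precisely this translation: unwinding the pentagon pasting into a vertical composite of whiskerings of $\kappa$ and recognising the resulting equation as $K\cdot\Com$ applied to the equality from Lemma~\ref{lemma:small_coherence}.

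For uniqueness, let $\phi'$ be any coherence making $(\C,K,\phi')$ a normalized lax $\overline{\Com}$-algebra with the given $K$. For each object $x\in\Com^2(\C)$, I would consider the naturality square of $\phi'$ along the morphism $(\Com\nu)_x\colon x\to\Com\beta_\C\Com\ee_\C(x)$ in $\Com^2(\C)$. Its right vertical is $K$ applied to $(\Com(\ee_\C\cdot\nu))_x=\uu$ and is therefore the identity; its bottom horizontal is a component of $\phi'\cdot\Com\beta_\C$, which equals $\uu_K$ by axiom (iii). Commutativity of the square then forces $\phi'_x=K\Com K(\Com\nu)_x=(K\cdot\Com(K\cdot\nu))_x=(K\cdot\Com\kappa)_x=\phi_x$, so $\phi'=\phi$.
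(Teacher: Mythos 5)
Your proposal is correct and follows essentially the same route as the paper: conditions (i), (iii), (iv) are read off from the hypotheses, condition (ii) is reduced to Lemma~\ref{lemma:small_coherence} by factoring the two pastings as $K\cdot\Com(-)$ applied to its two sides, and uniqueness is obtained from the naturality of the coherence along $\Com\nu$ together with the normalization $\phi\cdot\Com\beta_\C=\uu_K$ and $\ee_\C\cdot\nu=\uu_{\ee_\C}$. The only cosmetic difference is that you run the uniqueness argument componentwise at each object $x\in\Com^2(\C)$, whereas the paper phrases the same computation as the interchange law for the horizontal composite of $\phi$ with $\Com\nu$.
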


\begin{proof}
We need to verify the four conditions formulated in
Definition~\ref{definition:lax_algebra}. Condition (i) is assumed 
by the theorem. The equality of the
composites of natural transformations required in (ii) follows from the
chain of equalities
\begin{subequations}
\begin{align}
 \label{A}    (\phi\cdot \Com \ee_{\Com (\C)}) \square (\phi\cdot \Com ^2K)=&
(K\cdot \Com \kappa\cdot \Com \ee_{\Com (\C)}) \square (K\cdot \Com \kappa\cdot \Com ^2K)
\\
\label{B}  =& K\cdot \Com   \big((\kappa\cdot \ee_{\Com (\C)}) \square ( \kappa\cdot
                   \Com K)\big)
\\
\label{C}  =&(K\cdot \Com \big((\kappa\cdot \Com \ee_\C)\square (K\cdot
                  \Com \kappa)\big)
\\
\label{D}  =&(K\cdot \Com \kappa \cdot \Com ^2\ee_\C)\square 
(K\cdot \Com K\cdot \Com ^2\kappa )= 
(\phi\cdot \Com ^2\ee_\C)\square(K\cdot \Com \phi).
\end{align}
\end{subequations}
The terms in~\eqref{A} represent the left hand side of the equation
in~(ii),~\eqref{B} is obtained by factoring out $K\cdot \Com$,~\eqref{C} follows
from~\eqref{equation:coherence_of_nu}, and~\eqref{D} is the right hand
side of the equation in~(ii). Condition~(iii) follows from the
straightforward chain of equations
\[
\phi\cdot \Com {\beta_\C}=K\cdot \Com \kappa\cdot \Com {\beta_\C}=K\cdot (\Com K\cdot \Com \nu)\cdot \Com {\beta_\C}=(K\circ \Com  K)\cdot \Com \nu\cdot \Com {\beta_\C}=(K\circ \Com  K)\cdot\uu_{\Com {\beta_\C}}=\uu_K.
\]
Condition~(iv) leading to   $\phi\cdot \beta_{\Com(\C)} = \phi\cdot
\delta_\C = \uu_K$ is satisfied by the remaining
assumption of the theorem.

To show the uniqueness of coherence, let $(\C,K,\phi)$ be a normalized
lax $\overline{\Com}$-algebra. In particular, we assume that
$\phi\circ\id_{\Com(\beta_\C)}=\phi\cdot\Com(\beta_\C)=\id_K$ and we remind
that $\id_{\ee_\C}\circ\nu=\ee_\C\cdot \nu=\uu_{\ee_\C}$ is assumed by
the theorem.  The uniqueness of $\phi$ follows from the uniqueness of the
horizontal composition of 2-cells in the diagram
\[
\xymatrix@R=1em{
& \ar@{=>}[dd]^(.4){\Com\nu} && \Com(\C) \ar@{=>}[dd]^\phi \ar@/^1em/[rd]^K 
\\
\Com^2(\C)   \ar@/_1em/[rd]^{\Com\epsilon_\C}
\ar@/^2.9em/[rr]^{\id}  
&& \Com^2(\C) 
\ar@/^1em/[ru]^{\Com K} \ar@/_1em/[rd]^{\Com\epsilon_\C} 
&& \hphantom{m}\C \hphantom{m}
\\
&\Com(\C)  \ar@/_1em/[ru]^{\Com\beta_\C}  && \Com(\C)  \ar@/_1em/[ru]^K
}
\]
which means the commutativity of the diagram
\[
\xymatrix@C=6em@R=3em{K \circ \Com K  
\ar@{=>}[r]^(.37){(K \circ \Com K ) \cdot \Com \nu }
\ar@{=>}[d]_\phi
& (K \circ \Com K )\circ (\Com \beta_\C 
  \circ\Com \epsilon_\C )   \ar@{=>}[d]^{\phi \cdot (\Com \beta_\C 
    \circ \Com \epsilon_\C) }
\\
K \circ \Com \epsilon_\C  \ar@{=>}[r]^{(K \circ \Com \epsilon_\C )
  \cdot \Com(\nu)}
& K \circ \Com \epsilon_\C 
}
\]
of natural transformations. In that diagram the vertical right and
horizontal bottom transformations are identities by assumption.
\end{proof}

\begin{remark}\label{remark:adjunction}
The conditions $\ee_\C\cdot\nu=\uu_{\ee_\C}$ and
$\nu\cdot\beta_\C=\uu_{\beta_\C}$ in Theorem~\ref{theorem:big}
are equivalent to the adjunction
$(\uu,\nu)\colon \ee_\C\dashv \beta_\C$ with
unit $\nu\colon \id_{\Com(\C)}\Rightarrow \beta_\C \circ \ee_\C$ and
counit $\uu\colon \ee_\C\circ\beta_\C=\uu_{\C}$.  If we define
$\theta\colon \beta_\C\circ K \Rightarrow\uu$~as
\[
\theta:=\Com K\cdot \Com\nu\cdot\delta_\C
\] 
we get another adjunction
$(\theta,\uu)\colon \beta_\C\dashv K\colon \Com(\C)\to \C$, and
Theorem~\ref{theorem:big} then follows from the dual of
\cite[Lemma~2.4.16]{Milos}.
\end{remark}

A pointed category is a category with a chosen object $0$ which
is both initial and terminal. For an object $a$, we denote the unique
maps by $!_a\colon a\to0$ and~$!^a\colon 0\to a$.
The zero morphism $0\colon a \to b$ is, 
by definition, a~morphism that factorizes through $0$.

We will now apply Theorem~\ref{theorem:big} to the case when $\Com=\Ar$ is the
arrow category functor with the
comonad structure $(\Ar ,\delta,\ee)$ described in 
Section~\ref{Bures vladne vsem.}. As explained in
Example~\ref{example:pointed}, each  pointed category $\C$ 
is an $\Ar $-coalgebra with $\beta_\C(a)= \ !_a$
 and  $\beta_\C(f\colon a\to b)=(!_a,!_b)\colon f\to 0$. In this
 particular case we have a canonical explicit natural transformation 
$\nu\colon \uu_{\Ar (\C)}\Rightarrow
    \beta_\C \circ \ee_\C$ given by 
\begin{equation}
\label{Pul pate a uz je tma.}
\nu_{(f\colon a\to b)}=(\uu_a,!_b)\colon f\longrightarrow \ !_a.
\end{equation}

\begin{proposition}
\label{Ke sboru pristoupim az v Bilovicich.}
Let \ $\C$ be a pointed category,  $\K \colon \Ar (\C) \to \C$ a functor and
$\phi \colon \K  \circ \Ar \K  \Rightarrow \K  \circ \Ar \ee_\C$ the transformation
defined via $\nu$ in~(\ref{Pul pate a uz je tma.}) and \/ $\K $
by the formula in~\eqref{equation:rho_and_phi}. 
If \/ $\K \circ\beta_\C=\uu_\C$, $\Ar  \K \circ
\delta_\C=\beta_\C\circ \K $ and $\phi\cdot \delta_\C = \uu_\K $, 
then $(\C,\K ,\phi)$ is a normalized lax
$\overline{\Ar }$-algebra. Moreover, the coherence $\phi$ in any normalized lax
$\overline{\Ar }$-algebra $(\C,\K,\phi)$  is
uniquely determined by $\K $.
\end{proposition}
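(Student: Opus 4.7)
The plan is to deduce this directly from Theorem~\ref{theorem:big} with $\Com = \Ar$. The $\Ar$-coalgebra structure $\beta_\C$ on the pointed category $\C$ was already recorded in Example~\ref{example:pointed}, and the remaining data ($\K$ being a coalgebra morphism, $\K\circ\beta_\C = \uu_\C$, and $\phi\cdot\delta_\C = \uu_\K$) appear verbatim among the hypotheses. The only thing not immediately provided is a verification that the specific $\nu$ of~(\ref{Pul pate a uz je tma.}) satisfies the two axioms $\ee_\C\cdot\nu=\uu_{\ee_\C}$ and $\nu\cdot\beta_\C=\uu_{\beta_\C}$ that Theorem~\ref{theorem:big} demands.

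The first step is therefore to check these two identities on components. For an object $(f\colon a\to b)\in\Ar(\C)$, one has $\nu_f = (\uu_a,!_b)\colon f \to\, !_a$, and since $\ee_\C$ picks out the left edge of a square, $\ee_\C(\nu_f)=\uu_a=\uu_{\ee_\C(f)}$, which gives the first identity. For the second, evaluate on an object $a\in\C$: then $(\nu\cdot\beta_\C)_a = \nu_{!_a} = (\uu_a,!_0)$, and $!_0\colon 0\to 0$ must be $\uu_0$ because $0$ is terminal, so $\nu_{!_a}=(\uu_a,\uu_0)$ is the identity on $!_a=\beta_\C(a)$ in $\Ar(\C)$. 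Both identities thus reduce to elementary bookkeeping about the pointed structure, and this is the only computational content required.

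With these two identities in hand, the existence half of the proposition is immediate from Theorem~\ref{theorem:big}, since every remaining hypothesis of that theorem (the coalgebra-morphism condition $\Ar\K\circ\delta_\C=\beta_\C\circ\K$, the normalization $\K\circ\beta_\C=\uu_\C$, and the cocycle condition $\phi\cdot\delta_\C=\uu_\K$) is part of the hypothesis of the present proposition, and $\phi$ is defined through $\K$ and $\nu$ by the formula in~\eqref{equation:rho_and_phi}. The uniqueness assertion is likewise inherited: the last sentence of Theorem~\ref{theorem:big} already states that, once $\nu$ is fixed and $\K$ is given, the coherence $\phi$ of any normalized lax $\overline{\Com}$-algebra on $\C$ with structure map $\K$ is determined; specialising $\Com=\Ar$ and noting that~(\ref{Pul pate a uz je tma.}) provides a canonical, distinguished choice of $\nu$ intrinsic to the pointed category, uniqueness of $\phi$ follows. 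No serious obstacle is expected; the only point that warrants care is to confirm that the canonical $\nu$ of~(\ref{Pul pate a uz je tma.}) is the one used in computing $\phi$ from~\eqref{equation:rho_and_phi}, which is just a matter of matching definitions.
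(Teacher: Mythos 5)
Your proposal is correct and follows exactly the paper's route: the paper's own proof consists of observing that the two identities $\ee_\C\cdot\nu=\uu_{\ee_\C}$ and $\nu\cdot\beta_\C=\uu_{\beta_\C}$ are obvious for the canonical $\nu$ of~(\ref{Pul pate a uz je tma.}) and then invoking Theorem~\ref{theorem:big}, including its final uniqueness clause. The only difference is that you spell out the component-wise verification of those two identities, which the paper leaves implicit.
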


\begin{proof}
The conditions $\ee_\C\cdot\nu=\uu_{\ee_\C}$, and
$\nu\cdot\beta_\C=\uu_{\beta_\C}$ 
are obvious, so Theorem~\ref{theorem:big} applies.
\end{proof}

\section{Kernels as algebras}
\label{Zacina Zimni skola. Idealni snehove podminky ale ja trcim v Bonnu!}

From this section on, 
we will always assume that the category $\C$ 
is pointed and understood as an $\Ar $-coalgebra with $\beta_\C(a)= \ !_a$
and  $\beta_\C(f\colon a\to b)=(!_a,!_b)\colon f\to 0$, cf.\
Example~\ref{example:pointed}. 
\begin{definition}
\label{definition:algebraic_kernels}
A pointed category $\C$ has  algebraic kernels if for
every morphism $f\colon a\to b$ in $\C$ there is a choice of a  {\/\em
  kernel} of $f$ which is a
morphism $\kappa_f\colon \ker f \to a$ 
such that
\begin{itemize}
\item[(i)] 
the composite \, \redukce{$\ker f \stackrel {\kappa_f}\longrightarrow a \stackrel
  f\longrightarrow b$} is the zero morphism, 
\item[(ii)]  
$\kappa_{\uu_a}=!^a$
and $\kappa_{!_a}=\uu_a$ for any~$a\in \C$ (the
normalization), and
\item[(iii)] the following universal property holds: for any $g \colon c
  \to a$ such that $fg$ is the zero morphism, there exists a unique $h
  \colon c \to \ker f$ in the diagram
\[
\xymatrix@R=.5em{
\ker f \ar[rd]^{\kappa_f}
\\
&a \ar[r]^f &b
\\
c \ar[ur]^g \ar@{-->}@/^1em/[uu]^h
}
\]
with $\kappa_fh=g$.
\end{itemize}
\end{definition}
Assuming the axiom of choice, every category with
kernels has algebraic kernels. The notions of kernels
and algebraic kernels are then equivalent, and we shall always 
assume kernels to be algebraic without further mention. The main result of this
section reads

\begin{theorem}
\label{theorem:kernels}
Let $\C$ be a pointed category. The~following conditions are equivalent.
    \begin{itemize}
        \item[(i)] The category $\C$ carries a normalized lax $\overline{\Ar}$-algebra structure.
     \item[(ii)] The category $\C$ has kernels.
\end{itemize}
\end{theorem}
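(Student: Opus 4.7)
The plan is to prove both directions by leveraging Proposition~\ref{Ke sboru pristoupim az v Bilovicich.}, which reduces condition~(i) to the existence of a functor $\K\colon\Ar(\C)\to\C$ satisfying three requirements:
(a) $\K\circ\beta_\C=\uu_\C$,
(b) $\Ar\K\circ\delta_\C=\beta_\C\circ\K$, and
(c) $\phi\cdot\delta_\C=\uu_\K$,
where $\phi$ is built from $\K$ and the canonical $\nu$ of~\eqref{Pul pate a uz je tma.}. I will therefore construct a mutual translation between such functors $\K$ and choices of algebraic kernels on~$\C$.

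For (ii)$\Rightarrow$(i), I would set $\K(f):=\ker f$ on objects, and for a morphism $S=(h_0,h_1)\colon f\to g$ of $\Ar(\C)$ define $\K(S)\colon \ker f\to \ker g$ as the unique factorisation guaranteed by the universal property of $\ker g$, which is available because $gh_0\kappa_f=h_1 f\kappa_f=0$; functoriality is immediate from uniqueness. Condition~(a) is equivalent to the normalisation $\kappa_{!_a}=\uu_a$ from Definition~\ref{definition:algebraic_kernels}. Condition~(b) reduces to $\ker\uu_b=0$ (from $\kappa_{\uu_b}=!^b$) together with the uniqueness of morphisms into the zero object. Condition~(c), after unwinding~\eqref{equation:rho_and_phi} and~\eqref{Na koncert jede cely sbor vlakem}, amounts to the identity $\K((\kappa_f,!^b))=\uu_{\ker f}$, which follows from the uniqueness clause of the universal property applied to $\kappa_f$ itself, since both $\K((\kappa_f,!^b))$ and $\uu_{\ker f}$ post-composed with $\kappa_f$ yield $\kappa_f$.

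For (i)$\Rightarrow$(ii), I would conversely define $\ker f:=\K(f)$ and $\kappa_f:=\K(\nu_f)\colon \K(f)\to a$. The crucial observation is the twin factorisation
\[
(f,!_b)\,=\,(f,\uu_0)\circ\nu_f\,=\,\nu_{\uu_b}\circ\delta_\C(f)
\]
in $\Ar(\C)$, where $(f,\uu_0)\colon !_a\to !_b$ is the image of $f$ under the coalgebra structure functor. Applying $\K$ to the left-hand expression, together with $\K((f,\uu_0))=f$ from~(a), gives $\K((f,!_b))=f\kappa_f$; applying $\K$ to the right-hand expression and using~(b) to rewrite $\K(\delta_\C(f))=!_{\K(f)}$ and $\K(\uu_b)=0$ shows that $\K((f,!_b))=!^b\circ!_{\K(f)}$ factors through~$0$. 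Hence $f\kappa_f=0$. The normalisation $\kappa_{\uu_a}=!^a$ is immediate from $\K(\uu_a)=0$, and $\kappa_{!_a}=\uu_a$ is obtained by comparing~(a) applied to the morphism $\kappa_{!_a}$, which gives $\K((\kappa_{!_a},\uu_0))=\kappa_{!_a}$, with the instance of~(c) at $f=!_a$, which gives $\K((\kappa_{!_a},\uu_0))=\uu_a$. For the universal property, given $g\colon c\to a$ with $fg=0$, the square $(g,!^b)\colon !_c\to f$ is a morphism in $\Ar(\C)$; the map $h:=\K((g,!^b))\colon c\to\ker f$ satisfies $\kappa_f h=g$ by naturality of $\kappa$ at $(g,!^b)$ and $\kappa_{!_c}=\uu_c$. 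Uniqueness follows from the factorisation $(g,!^b)=(\kappa_f,!^b)\circ(h',\uu_0)$ valid for any alternative $h'$ with $\kappa_f h'=g$, combined with $\K((\kappa_f,!^b))=\uu_{\K(f)}$ from~(c) and $\K((h',\uu_0))=h'$ from~(a).

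The main obstacle is organisational rather than conceptual: the argument requires systematic translation between compositions of morphisms in $\Ar(\C)$ and the corresponding commutative squares in $\C$, with the unit axiom~(a) as the bridge. The most delicate point is the normalisation $\kappa_{!_a}=\uu_a$, since neither~(a) nor~(c) alone determines this value; both must be evaluated at the morphism $!_a$ and compared to pin it down.
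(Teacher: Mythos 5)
Your proposal is correct and follows essentially the same route as the paper: both directions reduce to the three hypotheses of Proposition~\ref{Ke sboru pristoupim az v Bilovicich.}, with $\K f=\ker f$ in one direction and $\kappa_f=\K(\nu_f)$ in the other, and your uniqueness argument (factoring $(g,!^b)=(\kappa_f,!^b)\circ\beta_\C(h')$ and applying $\K$) is just a flattened version of the paper's cube in $\Ar^2(\C)$. One minor remark: $\kappa_{!_a}=\uu_a$ follows already from (a) alone, since $\nu_{!_a}=(\uu_a,\uu_0)=\uu_{!_a}$ forces $\kappa_{!_a}=\K(\uu_{!_a})=\uu_{\K(!_a)}=\uu_a$ by functoriality, so the comparison with (c) you describe is not actually needed.
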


Before proving the theorem we describe the structure of a normalized lax
$\overline{\Ar}$-algebra explicitly. 
The data of Definition~\ref{definition:lax_algebra}
for the monad
$\overline{\Ar}$ translate to $\T=\Ar$, $\mu_\C=\Ar\ee_\C=\Ar e^*_\C$, $\eta_\C=\beta_\C$ or 
simply~$\beta$ when~$\C$ is understood, and $\eta_{\T(\C)}=\beta_{\Ar(\C)}=\delta_\C=m^*_\C$,
the structure map of the cofree coalgebra~$\Ar(\C)$. Further, we will
write $\K$ instead of
$a$. A normalized lax $\overline{\Ar}$-algebra is then 
an $\Ar$-coalgebra $(\C,\beta\colon \C \to \Ar(\C))$ with an algebra
structure map $\K\colon \Ar(\C) \to \C$ and a coherent natural
transformation $\phi$ in
\begin{equation}
\label{Uz je druhy den kolem nuly.}
\xymatrix@R=2.5em@C=2.5em{
\Ar^2(\C) \ar[r]^{\Ar \K} \ar[d]_{\Ar\ee_\C} 
& \Ar(\C) \ar[d]^\K \ar@{=>}[dl]|{\phi\rule{.1em}{0pt}}
\\
\Ar(\C) \ar[r]^\K &\ \C \ .
}
\end{equation}
We will 
use the notation \hbox{$\K\! f \ \triangleright \ a \Xrightarrow{f} b$}
inspired by \cite{duo} to express that $\K f$ is the value of~$\K$ on a morphism $f : a \to b$. 
A typical diagram capturing the
lax $\overline{\Ar}$-algebra structure~is
\begin{equation}
\label{Komplikace s nahranim clanku se vrsi.}
\xymatrix@C=-.2em@R=.0em{\K({\K} S) \ar@{->}[dd]_{\phi_S} 
& \triangleright & \K \ar[rrrrrr]^{\K S} f &&&&&& \K g
\\
&& \raisebox{.8em}{\rotatebox{-90}{$\triangleright$}} &&&&&& 
\raisebox{.8em}{\rotatebox{-90}{$\triangleright$}}
\\
\K h_0 &  \triangleright  & a \ar[dddd]_f  \ar[rrrrrr]^{h_0} &&&&&&
c  \ar[dddd]^g
\\
\\
&&&&&\raisebox{.2em}{$S$}
\\
\\
&& b  \ar[rrrrrr]^{h_1} &&&&&& d\, . \hspace {-.8em}
}
\end{equation}
The normalizing condition $\K\circ \beta=\uu_\C$ gives, for $a\in
\C$, $\K\beta_a = a$. For a morphism $f$ in $\C$ and the
associated morphism $\beta_f=(f,0)\colon !_a \to !_b$ in $\Ar(\C)$ it
gives the equality
$\K \beta_f = f$. Together with the normalizing condition $\phi\cdot \Ar\beta=\id_\K$, this is expressed~by 
\begin{equation}
\label{diagram: first unit coherence of A}
\xymatrix@C=-.2em@R=.0em{\K f \ar@{->}[dd]_{\phi_{\beta_f}=\id_{\K f}} 
& \triangleright & a \ar[rrrrrr]^{f} &&&&&& b
\\
&& \raisebox{.8em}{\rotatebox{-90}{$\triangleright$}} &&&&&& 
\raisebox{.8em}{\rotatebox{-90}{$\triangleright$}}
\\
\K f &  \triangleright  & a \ar[dddd]_{!_a}  \ar[rrrrrr]^{f} &&&&&&
b  \ar[dddd]^{!_b}
\\
\\
&&&&&\raisebox{.2em}{$\beta_f$}
\\
\\
&& 0  \ar[rrrrrr]^{0} &&&&&& 0\, . \hspace {-.8em}
}
\end{equation}
The functor $\K \colon \Ar(\C) \to \C$ must be an $\Ar$-coalgebra homomorphism from
the cofree $\Ar$-coalgebra $(\Ar(\C),\delta_\C)$ to $(\C,\beta)$. This means
that the diagram
\begin{equation}
\label{Dnes jsem vlastne v Bonnu druhy tyden.}
\xymatrix{\Ar(\C) \ar[r]^\K   \ar[d]_{\delta_\C} & \C \ar[d]^\beta
\\
\Ar^2(\C)\ar[r]^{\Ar\K} & \Ar(\C)
}
\end{equation}
commutes. Evaluating~(\ref{Dnes jsem vlastne v Bonnu druhy tyden.}) at 
$f\colon a\to b \in \C$ gives the equality of morphisms
\begin{equation}
\label{Dnes zacina seminar v 16:30 - magori.}
\big(\Ar\K( \delta_\C(f)) \colon \K f \longrightarrow \K \id_b\big)\  
=\ (\beta_{\K f} \colon \K f \longrightarrow   0).
\end{equation}
Together with the second normalizing condition $\phi\cdot \delta_\C = \id_{\K}$, this is expressed via the diagram
\begin{equation}
\label{diagram: second unit coherence of A}
\xymatrix@C=-.2em@R=.0em{\K f \ar@{->}[dd]_{\phi_{\delta_\C(f)}=\id_{\K f}} 
& \triangleright & \K f \ar[rrrrrr]^{\beta_{\K f}} &&&&&& 0
\\
&& \raisebox{.8em}{\rotatebox{-90}{$\triangleright$}} &&&&&& 
\raisebox{.8em}{\rotatebox{-90}{$\triangleright$}}
\\
\K f &  \triangleright  & a \ar[dddd]_{f}  \ar[rrrrrr]^{f} &&&&&&
b  \ar[dddd]^{\id_b}
\\
\\
&&&&&\raisebox{.2em}{$\delta_\C(f)$}
\\
\\
&& b  \ar[rrrrrr]^{\id_b} &&&&&& b\, . \hspace {-.8em}
}
\end{equation}
In particular, 
\begin{equation}
\label{Za hodinu se musim zacit balit na koncert.}
    \K \uu_b=0 
\end{equation}
for any $f$ with target $b$.

Evaluating~(\ref{Dnes jsem vlastne v Bonnu druhy tyden.}) at the
morphism $S = (h_0,h_1)$ represented by the square in~(\ref{equation:square}) leads to the equality of diagrams
\[
\xymatrix{
\K f  \ar[r]^{\K S} \ar[d]_{\Ar \K (\delta_\C(f))}   & \K g \ar[d]^{\Ar \K (\delta_\C(g))}
\\
\K\id_b \ar[r]^{\K P} & \K \id_d 
}
\ \raisebox{-1.7em}{$=$} \
\xymatrix{
\K f  \ar[r]^{\K S} \ar[d]_{\beta_{\K f}}   & \K g \ar[d]^{\beta_{\K g}}
\\
0 \ar[r]^0 & 0
}
\]
in which $P = (h_1,h_1)\colon \id_b \to \id_d$.
Combined with~(\ref{Dnes zacina seminar v 16:30 - magori.}) it gives
the equality $\K P  = 0$ 
which can be expressed as
\[
\xymatrix@C=-.1em@R=-.2em{
0 \ar[rrrrrr]^{0} &&&&&& 0
\\
\raisebox{.8em}{\rotatebox{-90}{$\triangleright$}}  &&&&&& 
\raisebox{.8em}{\rotatebox{-90}{$\triangleright$}}
\\
b \ar[dddddd]_{\id_b} \ar[rrrrrr]^{h_1} &&&&&& d  \ar[dddddd]^{\id_d} 
\\
\\
\\ 
&&& P
\\
\\
\\
b  \ar[rrrrrr]^{h_1} &&&&&& d \tecka
}
\]

\begin{proof}[Proof of Theorem~\ref{theorem:kernels}]
Let
$\C=(\C,\beta_\C,\K,\phi\big)$
  be a lax normalized $\overline{\Ar}$-algebra 
and recall the natural transformation 
$\nu\colon \uu_{\Ar(\C)}\Rightarrow
    \beta_\C \circ \ee_\C$ of \eqref{Pul pate a uz je tma.} given by $$\nu_{(f\colon a\to b)}=(\uu_a,!_b)\colon f\longrightarrow !_a.$$ For any $f\colon a \to b$ of $\C$ consider the component 
\begin{equation}\label{equation:AKvf}
\kappa_f :=(\K\cdot\nu)_f=\K\left(
\raisebox{2em}{\xymatrix{
	a \ar[r]^{\id_a}\ar[d]_f& a \ar[d]^{!_a}
\\
	b \ar[r]^{!_b}& 0
}}
\right)=\left(
	\K(f) \xrightarrow{\K(\uu_a,!_b)}\K(!_a)
   \right) =
	\left(\K(f) \xrightarrow{\K(\uu_a,!_b)} a
\right).
\end{equation}
The object $\K (f)$ is therefore connected to the domain of $f$ by
a component of the natural transformation $\kappa =\K\cdot \nu$. 
Thus for any morphism
$S=(h_0,h_1)\colon f\to g$ of $\Ar(\C)$   
the triangles \raisebox{.6em}{\rotatebox{-90}{$\triangleright$}} 
in diagram \eqref{Komplikace s nahranim clanku se vrsi.} can 
be replaced by actual maps $\kappa_f$ and $\kappa_g$ as in
\begin{equation}
\label{Komplikace s nahranim clanku se vrsi. A stale dal vrsi.}
\xymatrix@R=1.2em{
 \K \ar[r]^{\K S}\ar[d]_{\kappa_f} f & \K g\ar[d]^{\kappa_g}
\\
 a \ar[dd]_f  \ar[r]^{h_0} &
c  \ar[dd]^g
\\
\\
 b  \ar[r]^{h_1} &d
}
\end{equation}
such that the top square commutes. In fact, the top square
$(\kappa_f,\kappa_g)\colon \K S\to h_0$ is the component of
$(\Ar\kappa )_S=(\kappa_f,\kappa_g)$, cf.~\eqref{Na koncert jede cely sbor
  vlakem}, of the natural transformation
$\Ar\kappa=\Ar\K\cdot \Ar\nu\colon \Ar\K \Rightarrow \Ar\ee_\C$. By \eqref{Za hodinu se
  musim zacit balit na koncert.}, $\K\uu_a=0$, hence
$\kappa_{\uu_a}=!^a\colon 0\to a$. Further,
$\nu_{!_a}=(\uu_a,!_0)=\uu_{!_a}$, so
$\kappa_{!_a}=\K\uu_{!_a}=\uu_{\K!_a}=\uu_a$ by functoriality of $\Ar\K$.
Moreover, $f\kappa_f=0$ for any morphism of $\C$, which follows from the
following instance of~\eqref{Komplikace s nahranim clanku se vrsi. A
  stale dal vrsi.}, where $S=\delta_\C(f)$.
\begin{equation}
\xymatrix@R=1.2em@C=3em{
 \K \ar[r]\ar[d]_{\kappa_f} f & 0 \ar[d]
\\
 a \ar[dd]_f  \ar[r]^{f} &
b  \ar[dd]^{\uu_b}
\\
\\
 b  \ar[r]^{\uu_b} &b\, . \hspace {-.8em}
}
\end{equation}
We will show that $\kappa_f: \K f \to a$ is a kernel of $f\colon a \to b$.

Let $g\colon c\to a$ be a morphism such that $fg=0$. We must prove the
existence of a unique $h : c \to \K f$ in the diagram
\begin{equation}
\label{Targa Florio}
\xymatrix@R=.5em{
\K f \ar[rd]^{\kappa_f}
\\
&a \ar[r]^f &b \tecka
\\
c \ar[ur]^g \ar@{-->}@/^1em/[uu]^h
}
\end{equation}
Let us denote the morphism $(g,!^b)\colon !_c\to f$ by $T$. 
The commutativity of the diagram 
\begin{equation}
\xymatrix@R=1.2em@C=3em{
 c \ar[r]^{\K T}\ar[d]_{\kappa_{!_c}=\uu_c} & \K f \ar[d]^{\kappa_f}
\\
 c \ar[dd]_{!_c}  \ar[r]^{g} &
a  \ar[dd]^{f}
\\
\ar@{}[r]|T &
\\
 0  \ar[r]^{!^b}&b\, . \hspace {-.8em}
}
\end{equation}
implies that
$\K T\colon c\to \K f$ satisfies $\kappa_f\circ\K T=g$. The morphism $\K
T\colon c\to \K f$ will therefore be our candidate for the dashed arrow
in~(\ref{Targa Florio}).  
It remains to
prove that $h:= \K T$ is the unique morphism making~(\ref{Targa
  Florio}) commutative.

Let $h\colon c\to \K (f)$ be any morphism with $\kappa_f \circ h=g$.
The uniqueness $h=\K(T)$ follows from the value $\Ar \K (X)$ on the
cube
\[
X=\big(\uu_{!_c} \ , \ \Ar\kappa_{\delta_\C(f)}\big)\colon \Ar\beta(h)\to
T
\] cf.~\eqref{equation:boring_cube}, viewed as a morphism in
$\Ar^2(\C)$. By Proposition~\ref{Ke sboru pristoupim az v Bilovicich.}, it
holds $\phi=(\K \circ \Ar\K) \cdot \Ar\nu$, so the normalizing condition (iv) of
Definition~\ref{definition:lax_algebra} gives
$$\id_{\K}=\phi\cdot \delta_\C = (\K\circ \Ar\K) \cdot \Ar\nu \cdot \delta_\C=\K\cdot \Ar(\K\cdot \nu)\cdot \delta_\C=\K\cdot \Ar\kappa\cdot{\delta_\C}.$$ Note that
\[
(\Ar\kappa \cdot \delta_\C)
(f)=\Ar\kappa_{\delta_\C(f)}=(\kappa_f,!^b)\colon
!_{\K f}\to f
\] 
is the back square of $X$. Hence the back edge of the square
$\Ar\K (X)$ on the top of Figure~\ref{equation:boring_cube} is
$\K\big(\Ar\kappa_{ \delta_\C(f)}\big)=\id_{\K(f)}$, which proves
$h=\K (T)$.
\begin{figure}
\hphantom{a}\xymatrix{
& 
 \K f
  \ar[rr]^{\K\big(\Ar\kappa_{\delta_\C(f)}\big)=\id_{\K f}} && \K f 
\\
\ar[ru]^(0.3){h} c
\ar[rr]^{\id_c} 
&& c \ar[ru]_{\K T}
\\
& 
\ar[dd]^(.3){}|\hole \K f
  \ar[rr]^{\kappa_f} && 
\ar[dd]^f a
\\
\ar[dd] \ar[ru]^{h} c
\ar[rr]^(0.7){\id_c} 
&& c \ar[ru]^g \ar[dd]
\\
& \ar[rr]^{}|\hole 0 && b
\\
 \ar[rr] 0 \ar[ru] && 0 \ar[ru]
}
\caption{A cube with halo.
\label{equation:boring_cube}}
\end{figure}
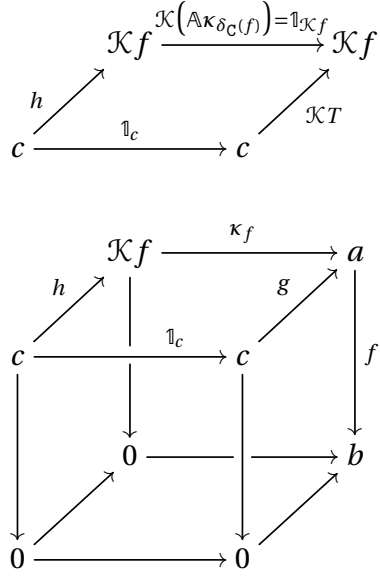

\begin{center}
* * * * *
\end{center}

Conversely, assume that $\C$ is equipped with kernels $\kappa_f :
\ker f \to a$. The
kernels are monomorphisms, which we emphasize by denoting them by
arrows with forked tails whenever it makes sense. We
define the action of the normalized lax $\overline{\Ar}$-algebra
structure on $f\in \Ar(\C)$ as 
\[
\K f:=\ker f.
\]  
Let $S=(h_0,h_1)\colon f\to g$ be the commutative
square representing a morphism of $\Ar(\C)$. 
Since $gh_0\kappa_f=h_1f\kappa_f=0$, there is a unique morphism
from $\ker f$ to $\ker g$ such that the upper square of the 
diagram 
\[
\xymatrix@R=.5em@C=.5em{\ker \ar@{-->}[rr] \ar@{>->}[dd]_{\kappa_f}  f 
&& \ker g \ar@{>->}[dd]^{\kappa_g} 
\\
\\
a \ar[rr]^{h_0}  \ar[dd]_f && c \ar[dd]^g
\\
&S
\\
b \ar[rr]^{h_1}&& d
}
\]
commutes.  We define $\K S: \ker f \to \ker g$ to be the dashed arrow in the above
diagram. The functoriality of this assignment 
follows from the universal property of kernels. Moreover, $\kappa_f$ assemble into a natural transformation $\kappa\colon \K \to \epsilon_\C.$  
The rest follows from Proposition~\ref{Ke sboru pristoupim az v
  Bilovicich.},
once we check the conditions $\K\circ\beta_\C=\uu_\C$, $\Ar \K\circ
\delta_\C=\beta_\C\circ \K$ and $\phi\cdot \delta_\C = \uu_\K$.

First, the condition $\K\circ\beta_\C=\uu_\C$ follows directly 
from normalizing assumption $\kappa_{!_a}=\uu_a$ for any~$a\in \C$. The condition $\Ar \K\circ
\delta_\C=\beta_\C\circ \K$ follows from the diagram 
\[
\xymatrix@R=.5em@C=1.5em{\ker f \ar@{-->}[rr]^(.55){\Ar\K(\delta_\C(f))=!_{\ker f}} \ar@{>->}[dd]_{\kappa_f}   
&& 0 \ar@{>->}[dd]^{\kappa_{\id_b}=!^b} 
\\
\\
a \ar[rr]^{f}  \ar[dd]_f && b \ar[dd]^{\id_b}
\\
&\delta_\C(f)
\\
b \ar[rr]^{\id_b}&& b
}
\]
where we used the second normalizing assumption $\kappa_{\id_b}=!^b$.

To show $\phi\cdot \delta_\C = \id_\K$ we first need to
describe the components of the natural transformation
$\phi=\K \cdot \Ar(\K \cdot \nu)$ as defined 
in \eqref{equation:rho_and_phi}. 
The component $(\K\cdot\nu)_f$ comes from the diagram 
\[
\xymatrix@R=.5em@C=.5em{\ker f \ar@{-->}[rr]^{\K\nu_f} \ar@{>->}[dd]_{\kappa_f} 
&& a \ar@{>->}[dd]^{\kappa_{!_a}=\id_a} 
\\
\\
a \ar[rr]^{\id_a}  \ar[dd]_f && a \ar[dd]^{!_a}
\\
&\nu_f
\\
b \ar[rr]^{!_b}&& 0
}
\]
hence $(\K\cdot \nu)_f=\kappa_f$. 
So $\phi_S=\big(\K \cdot \Ar(\K\cdot \nu)\big)_S=\K(\kappa_f,\kappa_g)$, 
which is depicted in the the diagram
\begin{equation}
\label{Jarka byla cely den doma jak bylo hrozne pocasi.}
\xymatrix@C=.5em@R=.5em{
\raisebox{-.4em}{} \ker \K S \ \ar@{>-->}[ddd]_{\phi_S}
\ar@{>->}[rr]^{\kappa_{\K S}}
&& \raisebox{-.4em}{} \ker f  \ar@{>->}[ddd]_{\kappa_f} \ar[rrrr]^{\K S}  &&&&
\raisebox{-.4em}{} \ker g  \ar@{>->}[ddd]^{\kappa_g}
\\
\\
&&&&&&
\\
\ker h_0 \ \ar@{>->}[rr]^{\kappa_{h_0}} && a \ar[dd]_f  \ar[rrrr]^{h_0} &&&&
c  \ar[dd]^{g}
\\
&&&&\raisebox{.2em}{$S$}
\\
&& b  \ar[rrrr]^{h_1} &&&& g
}
\end{equation}
Evaluating at $S=\delta_\C(f)$, the condition $\phi\cdot \delta_\C = \id_\K$ follows from the diagram
\[
\xymatrix@C=.5em@R=.5em{
\raisebox{-.4em}{} \ker f \ \ar@{>-->}[ddd]_{\phi_{\delta_\C (f)}}
\ar@{>->}[rr]^{\id_{\ker f}}
&& \raisebox{-.4em}{} \ker f  
\ar@{>->}[ddd]^{\kappa_f} \ar[rrrr]^{!_{\ker f}}  &&&& 
 \raisebox{-.4em}{} 0  \ar@{>->}[ddd]^{!^b}
\\
\\
&&&&&&
\\
\ker f \ \ar@{>->}[rr]^{\kappa_{f}} && a \ar[dd]_{f}  \ar[rrrr]^{f} &&&&
b  \ar[dd]^{\id_b}
\\
&&&&\raisebox{.2em}{$\delta_\C (f)$}
\\
&& b  \ar[rrrr]^{\id_b} &&&& b,
}
\]
in which the top left square commutes and the map $\kappa_f$ is a monomorphisms.
\end{proof}

 \begin{proposition}
\label{Je streda a uz melu z posledniho.}
 Any two normalized lax $\overline \Ar$-algebras on a pointed category
 $\C$ are connected by a pseudoisomorphism
 whose functor part is the identity functor on $\C$.
 \end{proposition}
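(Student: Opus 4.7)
The plan is to leverage the universal property of kernels together with the uniqueness clause of Proposition~\ref{Ke sboru pristoupim az v Bilovicich.}. Let $(\C,\K,\phi)$ and $(\C,\K',\phi')$ be two normalized lax $\overline{\Ar}$-algebras on the same pointed category. By the proof of Theorem~\ref{theorem:kernels}, both $\K f$ and $\K' f$ are kernels of an arbitrary morphism $f\colon a\to b$, with kernel maps $\kappa_f=(\K\cdot\nu)_f$ and $\kappa'_f=(\K'\cdot\nu)_f$ as in~\eqref{equation:AKvf}. Applying the universal property of $\K' f$ to $\kappa_f$ produces a unique morphism $\alpha_f\colon \K f\to \K' f$ with $\kappa'_f\circ\alpha_f=\kappa_f$; the symmetric construction together with the uniqueness clause makes each $\alpha_f$ invertible.

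Next I would check that the family $\{\alpha_f\}$ assembles into a natural isomorphism $\alpha\colon \K\Rightarrow \K'$ of functors $\Ar(\C)\to\C$. For a morphism $S=(h_0,h_1)\colon f\to g$ in $\Ar(\C)$, both composites $\alpha_g\circ \K S$ and $\K' S\circ \alpha_f$ satisfy the equation $\kappa'_g\circ(-)=h_0\circ \kappa_f$: the first by the defining equation of $\alpha_g$ combined with the description of $\K S$ recalled in~\eqref{Komplikace s nahranim clanku se vrsi. A stale dal vrsi.}, and the second by the analogous description for $\K' S$ together with $\kappa'_f\circ\alpha_f=\kappa_f$. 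Uniqueness in the universal property of $\K' g$ forces the two composites to coincide.

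It remains to show that $(\uu_\C,\alpha)$ constitutes a pseudoisomorphism between the two lax $\overline{\Ar}$-algebra structures. The unit axiom $\alpha\cdot\beta_\C=\uu_{\uu_\C}$ follows from the common normalization $\kappa_{!_a}=\uu_a=\kappa'_{!_a}$ and the uniqueness of $\alpha_{!_a}$. The main obstacle is the multiplication-coherence axiom relating $\phi$, $\phi'$ and $\alpha$; however, by Proposition~\ref{Ke sboru pristoupim az v Bilovicich.} both $\phi$ and $\phi'$ are uniquely determined by $\K$, respectively $\K'$, and the same natural transformation $\nu$ from~\eqref{Pul pate a uz je tma.}. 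Using the explicit description $\phi_S=\K(\kappa_f,\kappa_g)$ and $\phi'_S=\K'(\kappa'_f,\kappa'_g)$ from~\eqref{Jarka byla cely den doma jak bylo hrozne pocasi.}, postcomposition with the monomorphism $\kappa'_{h_0}$ reduces the required coherence identity to the equality of two factorizations through $\ker h_0$, which is immediate from the universal property. Thus no genuinely new calculation is needed beyond the uniqueness statements already at our disposal.
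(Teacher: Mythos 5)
Your proposal is correct and follows essentially the same route as the paper's proof: define the comparison $\alpha_f$ (the paper's $\Phi_f$) via the universal property of kernels applied to the two kernel maps, obtain invertibility and the unit condition from uniqueness and normalization, and verify the multiplication coherence by postcomposing with the monomorphism $\kappa'_{h_0}$ and invoking naturality of the kernel maps. The only cosmetic difference is that you isolate the naturality of $\alpha$ as a separate step, which the paper folds into the coherence computation.
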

 
\begin{proof}
By Theorem~\ref{theorem:kernels}, the two normalized lax algebra
structures in the proposition are the same as two systems $\K'$ and $\K''$
of kernels on $\C$.
The proposition asserts the existence
of a natural isomorphism $\Phi \colon \K' \Rightarrow \K''$ in the diagram
\[
\xymatrix@R=2.5em@C=2.5em{
\Ar(\C)  \ar[r]^{\K'} \ar@{=}[d]   & \C \ar@{=}[d] \ar@{<=>}[ld]|\Phi
\\
\Ar(\C)  \ar[r]^(.6){\K''}  & \C
}
\]
which satisfies the coherence
\begin{equation}
\label{Jarusku prijali mezi ochotniky.}
\xymatrix@R=2.5em@C=2.5em{
\Ar^2(\C)  \ar@{=}[d]  \ar[r]^{A\K'}  
& \ar@{<=>}[ld]|{\Ar\Phi}  \ar@{=}[d]  \ar[r]^{\K'} \Ar(\C)
& \ar@{=}[d] \C \ar@{<=>}[ld]|\Phi
\\
\Ar^2(\C) \ar@/_/[rd]_{\Ar\ee_\C}  \ar[r]^{A\K''}    
&   \ar@{=>}[d]|{\phi''}  \ar[r]^{\K''} \Ar(\C) & \C
\\
&\Ar(\C) \ar@/_/[ur]_{\K''}
}
\hskip 2em
\raisebox{-2.5em}{$=$} \hskip 1.8em
\xymatrix{
\Ar^2(\C)  \ar[r]^{\Ar \K'} \ar[d]_{\Ar \ee_\C}  & \Ar(\C) \ar[d]^{\K'} 
\ar@{=>}[ld]|{\phi'}
\\
\Ar(\C) \ar@{=}[d] \ar[r]^{\K'} & \C \ar@{=}[d] \ar@{<=>}[ld]|\Phi
\\
\Ar(\C) \ar[r]^(.6){\K''} & \C
}
\end{equation}
which involves the associated
transformations $\phi'$ and $\phi''$ in~(\ref{Jarka byla cely den
  doma jak bylo hrozne pocasi.}). Thanks to the normalization, the
standard coherence involving the monad units reduces to $\Phi_{!_a} =
\id_a$ for each $a\in \C$. 

The transformation $\Phi$ is constructed as follows. Given $f \colon a \to
b \in A\C$, then $\Phi_f\colon \K' f \to \K'' f$ 
is the unique morphism in the diagram
\[
\xymatrix@R=.5em{
\K'' f \ar@{>->}[rd]^{\kappa''_f}
\\
&a \ar[r]^f &b \ .
\\
\K' f \ar@{>->}[ur]_{\kappa'_f} \ar@{-->}@/^1em/[uu]^{\Phi_f}
}
\]
Its existence follows from the universal property of the kernels 
and it holds $\kappa''_f\circ\Phi_{f} =
\kappa'_f.$ 

The condition $\Phi_{!_a} = \id_a$ clearly holds.
To verify~(\ref{Jarusku prijali mezi ochotniky.}), it suffices to
check the equality 
\begin{equation}
\label{equation:morphism coherence}
\Phi_{h_0}\circ\phi'_S=\phi''_S\circ\K''\Phi_S\circ \Phi_{\K'S}.
\end{equation}
on the square $S =(h_0,h_1)\colon f\to g \in\Ar^2(\C)$.
We compute
\begin{subequations}
\begin{align}
\nonumber 
\kappa''_{h_0}\circ\Phi_{h_0}\circ\phi'_S &=
\kappa'_{h_0}\circ\phi'_S 
= \kappa'_f\circ \kappa'_{\K'S}
= \kappa'_f\circ \kappa''_{\K'S}\circ \Phi_{\K'S}
\\
\label{28d}&= \kappa''_f\circ \Phi_f\circ\kappa''_{\K'S}\circ \Phi_{\K'S}\\
\label{28e}&= \kappa''_f\circ \kappa''_{\K''S}\circ \K''\Phi_S\circ \Phi_{\K'S}= \kappa''_{h_0}\circ \phi''_S\circ \K''\Phi_S\circ \Phi_{\K'S}.
\end{align}
\end{subequations}
The equality between rows \eqref{28d} and \eqref{28e} uses the fact that $\kappa''$ is a natural transformation $\kappa''\colon \K''\Rightarrow \ee_\C$ and that $\ee_\C(\Phi_S)=\Phi_f$.
Since $\kappa''_{h_0}$ is a monomorphism, 
equality \eqref{equation:morphism coherence} holds.
\end{proof}

\begin{remark}
Proposition~\ref{Je streda a uz melu z posledniho.} admits a 
conceptual interpretation. By Remark~\ref{remark:adjunction},
any lax $\oA$-algebra structures $\K', \K''$ 
participate in the adjunctions
$\beta_\C \dashv \K'$ and $\beta_\C \dashv \K''$, and since 
adjoints are unique up
to isomorphism, so are the algebra structures. The connection between 
our results and property-like structures is clarified by the
notion of relative colax-idempotency: we say that a\hbox{ $2$-monad~$\T$} 
with unit $\eta$ on a
$2$-category ${\tt R}$ is colax-idempotent on a full
subcategory ${\tt P}\subseteq {\tt R}$ if, for any lax $\T$-algebra
$a\colon \T(A)\to A $ with $A\in {\tt P}$, there exists a $2$-cell
$\theta \colon \eta_A \circ a \Rightarrow \uu_{\T (A)}$ such that
$(\theta,\uu)$ is the counit-unit pair of an adjunction
$\eta_A\dashv a$. 

Our $2$-monad $\overline{\Ar}$ is colax-idempotent 
on the subcategory  ${\tt PtCat}  \subseteq \Cat_\Ar$ of
pointed categories. It follows that, for lax $\oA$-algebras
$\K'\colon \Ar (\C') \to C'$, $\K''\colon \Ar (\C'') \to \C''$ 
with $\C',\C'' \in {\tt PtCat}$, 
and for a functor $F\colon \C'\to \C''$ in ${\tt PtCat}$, 
there exists a~unique $2$-cell
$\overline{F}\colon F\circ \K' \Rightarrow \K''\circ \Ar (F)$ making
the pair $(F,\overline{F})$ a colax morphism of $\Ar$-algebras.
The unique extension of the 
functor $F$ is
given by the kernel diagram
\[
\xymatrix@R=.5em{
\K'' F(f) \ar@{>->}[rd]^{\kappa''_{F(f)}}
\\
&F(a) \ar[r]^{F(f)} &F(b) \ ,
\\
F(\K' f) \ar@{>->}[ur]_{F(\kappa'_{\!f})} \ar@{-->}@/^1em/[uu]^{\overline{F}_f}
}
\]
with  $f: a \to b \in \C'$.
\end{remark}

\section{The spindle embedding}
\label{Jarka upekla skvely jablecny zavin.}

The aim of this section is to study the relation between the
iterations of the functor $\Ar$ defined in~\eqref{Ztratil jsem klicek od
  kola - porad neco ztracim.}, and the bar construction. Although its
r\^ole is auxiliary, it still contains interesting combinatorial
constructions.  

Let $\underline {\tt n}$ be the 
ordered set $\{0 < 1 < \cdots < n\}$ considered as a category, and
let $\ttN_n (\C) := [\underline
\ttn,\C]$ be the $n$th piece of the categorical version of 
the standard simplicial 
nerve of $\C$. The latter is
the category of chains of composable arrows
\begin{equation}
\label{Mozna odjedu o den pozdeji.}
[f_1,\ldots,f_n] :=
\xymatrix@1{\bullet \ar[r]^{f_1} & \bullet \ar[r]^{f_2} 
& \cdots& \cdots  \bullet \ar[r]^{f_{n-1}}  
& \bullet \ar[r]^{f_n} & \bullet \ .
}
\end{equation}
The morphisms of $\ttN_n (\C)$ are
commutative ladders
\begin{equation}
\label{S Jaruskou jsem byl na motorkach v Nepomuku.}
\xymatrix{\bullet \ar[d]^(.4){\varphi_0} \ar[r]^{f'_1} 
& \bullet \ar[r]^{f'_2}  \ar[d]^(.4){\varphi_1} 
& \cdots& \cdots  \bullet \ar[r]^{f'_{n-1}}  
& \bullet  \ar[d]^(.4){\varphi_{n-1}} \ar[r]^{f'_n} & \bullet \ar[d]^(.4){\varphi_n} 
\\
\bullet \ar[r]^{f''_1} & \bullet \ar[r]^{f''_2} 
& \cdots& \cdots  \bullet \ar[r]^{f''_{n-1}}  & \bullet \ar[r]^{f''_n} & \bullet
}
\end{equation}
in $\C$. We denote by $\ttNw_n(\C) \subset \ttN_n(\C)$, $n \geq
0$, the wide (aka luff)
subcategory with the morphisms as in~(\ref{S Jaruskou jsem byl na motorkach v
  Nepomuku.}), where all $\varphi_i$'s except $\varphi_0$ are the
identities. The natural isomorphisms of categories 
\begin{equation}
\label{Jeste bych se zitra chtel projet.}
\ttNw_a (\ttNw_b(\C)) \cong
\ttNw_{a+b}(\C)
\end{equation} 
can be easily checked.
The collections $\{\, \ttN_n(\C) \mid  n\geq 0\,\}$  and
$\{\, \ttNw_n(\C) \mid n\geq 0\,\}$ with the standard simplicial
operators of the nerve form simplicial objects
$\ttN_\bullet(\C)$ and $\ttNw_\bullet(\C)$ in $\Cat$.

Recall that the (categorical) d\'ecalage is 
the functor $\Dec : \Cat \to \Cat$  that acts
on a category $\C$ as the coproduct
\begin{equation}
\label{Dnes jsme byli s Jarkou na vystave betlemu.}
\Dec(\C) := \bigsqcup_{a \in \C} \C/a .
\end{equation}
The (upper) d\'ecalage  $\dec(S)_\bullet$ of a simplicial object
$S_\bullet$ is the simplicial object with 
\[
\dec(S)_n := S_{n+1},\ n \geq
0,
\] 
with the simplicial operators $d'_i$, $s'_j$ 
given by the simplicial operators of
$d_i$, $s_j$ of $S_\bullet$~as 
\begin{align*}
&d'_i: \dec(S)_n = S_{n+1} \redukce{$\stackrel {d_i}\longrightarrow$} 
S_n = \dec(S)_{n-1}, \ n
\geq 1, \ 0 \leq i \leq n,  \ \hbox { and } 
\\
&s'_j: \dec(S)_n = S_{n+1} \redukce{$\stackrel {s_j}\longrightarrow$} 
S_{n+2}  =   \dec(S)_{n+1},  \ n
\geq 0, \ 0 \leq j \leq n;
\end{align*}
the top operators $d_{n+1} : S_{n+1} \to S_n$ and   
$s_{n+1} : S_{n+1} \to S_{n+2}$ of $S_\bullet$ are forgotten. The relation
between the two d\'ecalages is described in

\begin{proposition}
There is a functorial isomorphism of simplicial objects in $\Cat$
\begin{equation}
\label{Jarce se kupodivu ty motorky moc libily.}
\dec (\ttNW (\C))_\bullet \cong \ttN_\bullet (\Dec(\C)). 
\end{equation}
Here~\,$\ttNW_\bullet(\C)=  \{\, \ttNW_n(\C) \mid  n\geq 0\, \}$ is the
simplicial object formed by still another wide subcategories $\ttNW_n(\C)$
of \, $\ttN_n(\C)$ whose morphisms~\eqref{S Jaruskou jsem byl na
  motorkach v Nepomuku.} have $\phi_n = \id$. 
\end{proposition}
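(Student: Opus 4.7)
The strategy is to construct an explicit bijection at each simplicial level and then check, one operator at a time, that it respects the simplicial structure. No deep machinery is needed: the content of the statement is the combinatorial matching between chains of composable arrows in $\C$ with a frozen last vertex and chains in the slice categories $\C/a$.

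\textbf{Unpacking the two sides.} An object of $\dec(\ttNW(\C))_n = \ttNW_{n+1}(\C)$ is a chain $[f_1, \ldots, f_{n+1}]$ of $n+1$ composable arrows ending at some $x_{n+1}\in\C$. An object of $\ttN_n(\Dec(\C))$ is a chain of $n$ morphisms in the coproduct $\Dec(\C)=\bigsqcup_{a\in\C}\C/a$; since composability forces the whole chain to stay in a single summand $\C/a$, this data is equivalent to a choice of $a$ together with $n+1$ objects $y_0\to\cdots\to y_n$ of $\C/a$. Define $\Phi_n$ on objects by
\[
\Phi_n[f_1,\ldots,f_{n+1}] := \bigl[\,(x_0,g_0)\to(x_1,g_1)\to\cdots\to(x_n,g_n)\,\bigr]\ \text{in}\ \C/x_{n+1},
\]
where $g_i := f_{n+1}\circ\cdots\circ f_{i+1}$ and the internal arrows are the $f_{i+1}$ viewed as morphisms over $x_{n+1}$. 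A morphism of $\ttNW_{n+1}(\C)$ is a commutative ladder whose last column is $\id_{x_{n+1}}$; this condition pins the apex and makes the ladder into a morphism of chains in $\C/x_{n+1}$, which is exactly a morphism of $\ttN_n(\Dec(\C))$. The inverse of $\Phi_n$ reads the $f_i$ off the arrows of the chain and recovers $f_{n+1}$ as the structural map $y_n\to a$, yielding bijections at every level.

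\textbf{Simplicial compatibility.} It remains to verify that $\Phi_\bullet$ commutes with $d'_i$ and $s'_j$ for $0\le i,j\le n$, recalling that only the top operators $d_{n+1}$ and $s_{n+1}$ of $\ttNW_\bullet$ are discarded by $\dec$---equivalently, the apex $x_{n+1}$ is never acted upon, which matches the fact that a chain in $\Dec(\C)$ cannot cross between slice categories. The outer face $d'_0$ drops the initial vertex on both sides. The inner faces $d'_i$ with $1\le i<n$ compose adjacent morphisms; under $\Phi$ this is exactly the same composition performed in $\C/x_{n+1}$. The top décalage face $d'_n$ replaces $f_n,f_{n+1}$ by $f_{n+1}\circ f_n$, which under $\Phi$ corresponds to dropping the last slice-object $(y_n\to a)$ from the chain in $\C/a$, matching the nerve's $d_n$. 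Degeneracies correspond to inserting $\id_{x_i}$ at the appropriate slot on both sides. Functoriality in $\C$ is immediate from the explicit formulas, since postcomposition with a functor $F\colon\C\to\D$ preserves both composable chains and slice categories. The only real hazard I foresee is bookkeeping: keeping the décalage index shift $d'_i=d_i$ (and the discarded top face) straight, and remembering that the frozen apex $x_{n+1}$ on one side corresponds precisely to the constancy of $a$ on the other.
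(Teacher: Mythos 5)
Your proof is correct and takes essentially the same route as the paper, which simply declares the object-level identification ``obvious and well-known'' and the morphism-level and functoriality checks ``straightforward''; your writeup supplies exactly the details the paper omits. The two key points you isolate --- that composability confines a chain in $\Dec(\C)$ to a single slice summand $\C/a$, and that the frozen last column $\varphi_{n+1}=\id$ of a morphism in $\ttNW_{n+1}(\C)$ is precisely what makes the vertical maps into morphisms over a fixed apex --- are the right ones, and your operator-by-operator check (using only the faces and degeneracies that survive the d\'ecalage) is accurate.
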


\begin{proof}
The canonical isomorphism between the objects of the categories
in~(\ref{Jarce se kupodivu ty motorky moc libily.}) is obvious and
well-known, cf.\ e.g.\ the notes in the introduction
to~\cite{GarnerKockWeber}. 
Verifying the morphism part and functoriality is straightforward. 
\end{proof}

\begin{proposition}
\label{Prvni dny nam proprsi.}
The functors \,  $\ttNw_n : \Cat \to \Cat$ and\,  $\Dec^n : \Cat \to \Cat$ 
are isomorphic for each $n \geq 0$. 
\end{proposition}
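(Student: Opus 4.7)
The plan is to reduce the general statement to the base case $n=1$ and then iterate, using the already-established associativity isomorphism~\eqref{Jeste bych se zitra chtel projet.}.

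First I would treat the base case $\ttNw_1 \cong \Dec$. By definition, an object of $\ttNw_1(\C)$ is a functor $\underline{\tt 1} \to \C$, i.e.\ a morphism $f\colon a\to b$ of $\C$. A morphism $f\to f'$ in $\ttNw_1(\C)$ is a commutative square~\eqref{S Jaruskou jsem byl na motorkach v Nepomuku.} with $n=1$ whose right leg $\varphi_1$ is the identity; thus the codomains agree, $b=b'$, and a morphism is precisely a commutative triangle over $b$. This is exactly a morphism in $\C/b$, so one reads off a canonical bijection
\[
\ttNw_1(\C) \;=\; \bigsqcup_{b\in\C}\,\C/b \;=\; \Dec(\C).
\]
Functoriality of this identification in $\C$ is immediate, since a functor $F\colon \C\to\D$ acts on objects of $\ttNw_1(\C)$ by post-composition and on morphisms by the obvious slice-wise action, which is exactly how $\Dec(F)$ is defined on the coproduct of slices.

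Next I would do induction on $n$, starting from the trivial case $n=0$ where both functors equal $\id_{\Cat}$. Assume $\ttNw_{n-1}\cong \Dec^{n-1}$ as functors $\Cat\to\Cat$. The isomorphism~\eqref{Jeste bych se zitra chtel projet.} applied with $a=1$, $b=n-1$ gives a natural isomorphism
\[
\ttNw_n(\C) \;\cong\; \ttNw_1\bigl(\ttNw_{n-1}(\C)\bigr),
\]
and combining this with the base case and the inductive hypothesis yields
\[
\ttNw_n(\C) \;\cong\; \Dec\bigl(\ttNw_{n-1}(\C)\bigr) \;\cong\; \Dec\bigl(\Dec^{n-1}(\C)\bigr) \;=\; \Dec^n(\C),
\]
natural in $\C$. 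This finishes the induction.

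The only genuinely non-trivial point is the naturality of the associativity isomorphism~\eqref{Jeste bych se zitra chtel projet.} in the ambient category, because it is this naturality that lets the induction step compose with the base-case isomorphism $\ttNw_1\cong\Dec$ and produce a natural isomorphism rather than merely an objectwise one. Since~\eqref{Jeste bych se zitra chtel projet.} is proved via the canonical adjunction $\underline{\tt a}\times\underline{\tt b}\cong\underline{\tt{a+b}}$ and the exponential law, its naturality in $\C$ is automatic, so the main obstacle dissolves and the argument is essentially a bookkeeping exercise in unwinding slice categories and iterated arrow chains.
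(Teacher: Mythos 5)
Your proof is correct and follows essentially the same route as the paper: establish the case $n=1$ as the identification of $\ttNw_1(\C)$ with the coproduct of slices, then iterate using the associativity isomorphism~\eqref{Jeste bych se zitra chtel projet.} to reduce $\ttNw_n$ to $\ttNw_1\circ\ttNw_{n-1}$ and invoke the inductive hypothesis. The paper treats the base case as "obvious" where you spell it out, and your explicit remark about naturality of~\eqref{Jeste bych se zitra chtel projet.} is a welcome (if minor) addition.
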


\begin{proof}
We will construct the functorial isomorphisms 
\begin{equation}
\label{Pristi tyden zavody v Jicine.}
\ttNw_n(\C) \cong \Dec^n(\C), \ \C \in \Cat,\ n \geq 0,
\end{equation}
inductively.
There is nothing to prove for $n=0$. Equation~\eqref{Pristi tyden
  zavody v Jicine.} with $n=1$ reads
\begin{equation}
\label{Musim si nakoupit na dva dny dopredu.}
\ttNw_1(\C) \cong \Dec(\C), \ \C \in \Cat,
\end{equation}
which is an obvious isomorphism of categories. Now we apply this
equality to $\ttNw_1(\C)$ in place of $\C$. The result is
\[
\ttNw_1(\ttNw_1 (\C)) \cong \Dec(\ttNw_1(\C)).
\]
The left hand side equals
$\ttNw_2 (\C)$ 
by~(\ref{Jeste bych se zitra chtel projet.}), while the right hand
side is $\Dec^2(\C)$ by~\eqref{Musim si nakoupit na dva dny dopredu.}. 
This verifies~(\ref{Pristi tyden zavody v Jicine.}) for $n = 2$. Iterating this process 
gives~(\ref{Pristi tyden zavody v Jicine.}) for an arbitrary $n$. 
\end{proof}

\begin{example}
The isomorphism in~(\ref{Pristi tyden zavody v Jicine.}) for $n=1$
interprets  a morphism
$f: a \to b \in \ttNw_1(\C)$ as an
element of $\C/b \subset \Dec(\C)$. For $n=2$ it brings   
\redukce{$a \stackrel r\to b \stackrel s\to c \in \  \ttNw_2(\C)$} 
to the triangle
\begin{equation}
\label{Koupil jsem si 40ti procentni slehacku. Mnam!}
\xymatrix@R=1.3em@C=1.3em{a \ar[rr]^r \ar[rd]_{sr}  && b \ar[ld]^s
\\
&c
}
\end{equation}
representing a morphism $sr \to s$ in $\C/c$, that is, as an object
of $\Dec(\C)/s \subset \Dec^2(\C)$.
\end{example}

Consider the transformation $J : \Dec \to \Ar$ of functors in
{\tt Cat} such that $J_\C:\Dec(C) \to \Ar(C)$ is the identity on objects 
and sends a morphism $gf \to g \in \Dec(\C)$ represented by
\[
\xymatrix@R=1.9em@C=1.9em{a \ar[rr]^f \ar[rd]_{gf}  && b \ar[ld]^g
\\
&c
}
\]
to the square
\[
\xymatrix{a \ar[r]^f \ar[d]_{gf}  & b \ar[d]^g
\\
c \ar@{=}[r]&c
}
\]
representing a morphism $gf \to f$ \, in $\Ar(\C)$. 

\begin{definition}
\label{zitra v 8:00 rano}
The {\/\em spindle embedding \/} is the composite
\begin{equation}
\label{V patek bohyne.}
\redukce{$\S_n^w : \ttNw_n(\C) \stackrel \cong\longrightarrow \Dec^n(\C)
\stackrel {J^n_\C}\longrightarrow \Ar^n(\C)$}
\end{equation} 
of the isomorphism in~\eqref{Pristi tyden zavody v Jicine.} and
\[
J^n_\C: = \Ar^{n-1} J_\C \circ \Ar^{n-2} J_{\Dec(\C)} \circ \cdots \circ  
J_{\Dec^{n-1}(\C)}.
\]
\end{definition}

Unfolding Definition~\ref{zitra v 8:00 rano} 
leads to an explicit formula for the value of
the spindle on a chain 
\begin{equation}
\label{Jarka odejela na tyden na chalupu.}
\redukce{$\Phi : a_0 \stackrel{\varphi_1}\longrightarrow a_1
\stackrel{\varphi_2}\longrightarrow \cdots
\stackrel{\varphi_n}\longrightarrow a_n \in \ttNw_n(\C)$}
\end{equation}
which we give below.
Denote as before by  $\dvedve^{\times n}$ 
the poset/category 
\[
\dvedve^{\times n} = \{v = (\Rada \epsilon1n) \  |  \
\epsilon_i \in \{0,1\}, 1 \leq i \leq n \},
\]
with the partial order $v' \preceq v''$ if and only if
$\epsilon'_i \leq \epsilon_i''$ for all $1 \leq i \leq n$. 
The objects of $\Ar^n(\C)$ are functors $\dvedve^{\times n} \to \C$, that is,
commutative diagrams of the shape of the
$1$-skelet of an  $n$-dimen\-sional cube,
with the vertices decorated by the objects of $\C$ and the
edges decorated by the morphisms.
For a vertex $v \in \dvedve^{\times n}$ define 
\[
p(v) = 
\begin{cases}
\max\{i \ | \ \epsilon_i=1\},
&\hbox {if $\epsilon \not= (00\cdots0)$;}    
\\
0, &\hbox {if  $\epsilon = (00\cdots0)$.}    
\end{cases}
\] 

The spindle embedding applied on $\Phi$
in~(\ref{Jarka odejela na tyden na chalupu.}) 
decorates a vertex $v \in \dvedve^{\times n}$ by $a_{p(v)}$,
and an arrow $v' \prec v''$ between two adjacent vertices by the
unique composite  $a_{p(v')} \to  a_{p(v'')}$ of the morphisms
in~(\ref{Jarka odejela na tyden na chalupu.}) if $p(v') < p(v'')$, and
by the identity if  $p(v') = p(v'')$.

\begin{example}
The three initial steps of the spindle embedding  
$\S_n^w: \ttNw_n(\C) \hookrightarrow
\Ar^n(\C)$ are the following.

\noindent 
$\bullet$ The categories  $\ttNw_1(\C)$ and $\Ar^1(\C)$ have the
same objects,  and $\S^w_1$ is the identity on objects.

\noindent 
$\bullet$  $\S^w_2: \ttNw_2(\C) \hookrightarrow
\Ar^2(\C)$ sends \ \redukce{$\xymatrix@1{a_0 \ar[r]^f & a_1 \ar[r]^g & a_2
}$} \ to
\[
\xymatrix{(00) \ar[r]^f \ar[d]_{gf} & (10) \ar[d]^g
\\
(01) \ar[r]^\id & (11)
}
\]
where $(00)$ is decorated by $a_0$, $(10)$ by $a_1$, and $(01), (11)$
by $a_2$.

\noindent 
$\bullet$ $\S^w_3: \ttNw_3(\C) \hookrightarrow
\Ar^3(\C)$  sends \ \redukce{$\xymatrix@1{a_0 \ar[r]^f & a_1 \ar[r]^g &
 a_2 \ar[r]^h & a_3
}$} \ to
\[
\xymatrix@C=2em@R=2em{
&(010)  \ar[dd]|\hole^(.75){\id} \ar[rr]^{h}  &&(011) \ar[dd]^(.2){\id}
\\
(000) \ar[ur]^{gf}\ar[dd]_{f} \ar[rr]^(.7){hgf} &&(001)
\ar[dd]^(.2){\id} 
\ar[ur]^\id
\\
&(110) \ar[rr]|\hole^(.3){h}&&(111)
\\
(100)\ar[ur]^g \ar[rr]^(.4){hg}&&
(101)\ar[ur]^\id
}
\]
with 
$(000)$ decorated by $a_0$, $(100)$ decorated by $a_1$, 
$(010)$ and $(110)$  decorated by $a_2$, and
the remaining vertices decorated by $a_3$.
\end{example}

\begin{proposition}
The spindle embedding extends to a functor\  $\S_n: \ttN_n(\C) \to \Ar^n(\C)$.
\end{proposition}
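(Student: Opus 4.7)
The plan is to recognize the map $v \mapsto p(v)$ as a functor $p \colon \dvedve^{\times n} \to \un\ttn$ between posets viewed as categories, and to identify the spindle embedding on objects with precomposition $(-) \circ p$. Once this is in place, the extension of $\S_n^w$ from $\ttNw_n(\C)$ to the full nerve $\ttN_n(\C)$ is forced by abstract nonsense about functor categories.

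First I would check that $p$ is order-preserving: if $v' \preceq v''$ then $p(v') \leq p(v'')$. The case $v' = (00\cdots 0)$ is trivial, and otherwise the index $i := p(v')$ satisfies $\epsilon'_i = 1$, hence $\epsilon''_i = 1$, so $p(v'') \geq i$. Next, unfolding the explicit description of $\S_n^w$ on the chain~\eqref{Jarka odejela na tyden na chalupu.} shows that $\S_n^w(\Phi)$ is exactly the composite $\Phi \circ p$: the decoration $a_{p(v)}$ at vertex $v$ is $\Phi(p(v))$, and the decoration on an edge $v' \prec v''$ is the image under $\Phi$ of the unique arrow $p(v') \to p(v'')$ in $\un\ttn$. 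The desired extension can then be taken to be the precomposition functor
\[
\S_n := (-) \circ p \colon \ttN_n(\C) = [\un\ttn,\C] \longrightarrow [\dvedve^{\times n},\C] = \Ar^n(\C).
\]
A morphism in $\ttN_n(\C)$ is a natural transformation $\Phi' \Rightarrow \Phi''$ with components $\varphi_0,\dots,\varphi_n$ as in~\eqref{S Jaruskou jsem byl na motorkach v Nepomuku.}, and $\S_n$ sends it to the natural transformation $\Phi' \circ p \Rightarrow \Phi'' \circ p$ whose component at a vertex $v$ is $\varphi_{p(v)}$; the naturality squares are instances of the naturality of $\varphi$ at the unique arrow $p(v') \to p(v'')$ in $\un\ttn$, and functoriality of $\S_n$ is automatic.

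It remains to verify that $\S_n$ restricts to $\S_n^w$ on $\ttNw_n(\C)$. On objects this is exactly the content above. On a morphism in $\ttNw_n(\C)$, where $\varphi_i = \id$ for $i \geq 1$, the component of $\S_n$ at $v$ is $\varphi_0$ if $v = (00\cdots 0)$ and the identity otherwise. The main, though purely computational, obstacle will be showing that this agrees with the morphism part of $\S_n^w$ as constructed in Definition~\ref{zitra v 8:00 rano} via the composite $J_\C^n \circ (\text{iso})$; this should boil down to an induction on $n$ tracing how the natural transformation $J \colon \Dec \to \Ar$ acts on the iterated slice representation $\ttNw_n(\C) \cong \Dec^n(\C)$, with the inductive step reducing to the easy case $n=1$ where both constructions collapse to the identity on the common object class together with the obvious treatment of vertical arrows.
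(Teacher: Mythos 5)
Your proposal is correct and matches the paper's (one-line) proof in substance: the paper declares the extension an ``immediate consequence of the explicit description'' of the spindle embedding, and your observation that this description is exactly precomposition with the order-preserving map $p\colon \dvedve^{\times n}\to \underline{\tt n}$ is precisely what makes that claim rigorous, giving functoriality of $\S_n = p^*$ for free (and, since $p\dashv\iota$, it also recovers the paper's alternative route via the right Kan extension $\mathrm{Ran}_\iota = p^*$). The only piece you leave as a sketch --- that $p^*$ agrees with $J^n_\C$ on morphisms of $\ttNw_n(\C)$ --- is likewise left implicit in the paper and is routine.
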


\begin{proof}
An immediate consequence of the explicit description of the spindle
embedding given above. It would also follow from
Proposition~\ref{Vzhledem ke strasnemu pocasi asi jedu jen na otocku.} below.
\end{proof}

\begin{definition}
Let $\iota : \underline{\tt n} \to \dvedve^{\times n}$ be the 
embedding that sends the path 
$0 \to 1 \to \dvedve \to  \cdots \to n$ \ in $\underline{\tt n}$ to the path
\[
(000\cdots0) \longrightarrow (100\cdots0) \longrightarrow (110\cdots0)
\longrightarrow \cdots \longrightarrow (111\cdots1)
\]
in $\dvedve^{\times n}$. We will call the latter the {\/\em spine\/} of the cube
$\dvedve^{\times n}$. The restriction to the spine defines a~functor $R_n :
\Ar^n(\C) \to \ttN_n(\C)$. 
\end{definition}

It follows from the description of the spindle embedding
given above that
the composite \redukce{$\ttN_n(\C) \stackrel {\S_n}\longrightarrow \Ar^n(\C)
\stackrel {R_n}\longrightarrow \ttN_n(\C)$}  equals the identity.  

\begin{proposition}
\label{Vzhledem ke strasnemu pocasi asi jedu jen na otocku.}
The spindle embedding $\S_n(Y)$ is the right Kan extension of 
the functor
$Y : \underline \ttn \to \C$ along
$\iota$ in the diagram
\[
\xymatrix@C=3em{
\dvedve^{\times n}    \ar@/^1.5em/[rd]^{\S_n(Y)}
\\
\underline{\tt n} \ar@{_{(}->}[u]_\iota \ar[r]^{Y}   & \C.
}
\]
Equivalently, there is a natural isomorphisms
\begin{equation}
\label{Jarka odjizdi na tyden na chalupu.}
[X,\S_n  \!(Y)] \cong [R_n(X),Y]
\end{equation}
for each $X \in \Ar^n(\C)$ and $Y \in \ttN_n(\C)$.
\end{proposition}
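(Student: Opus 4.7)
The plan is to prove the proposition by computing the pointwise right Kan extension $\mathrm{Ran}_\iota Y$ at each vertex of $\dvedve^{\times n}$ and identifying it with the explicit formula for $\S_n(Y)$ given above. The natural isomorphism~\eqref{Jarka odjizdi na tyden na chalupu.} then follows from the defining adjunction of the right Kan extension, together with the observation that the restriction functor $\iota^*$ coincides with $R_n$.

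First I would analyze the comma category $v\downarrow\iota$ for a vertex $v=(\epsilon_1,\dots,\epsilon_n)\in\dvedve^{\times n}$. Since $\dvedve^{\times n}$ and $\underline{\tt n}$ are posets, this comma category is simply the full subposet of $\underline{\tt n}$ on those $i$ with $v\preceq\iota(i)=(\underbrace{1,\dots,1}_{i},\underbrace{0,\dots,0}_{n-i})$. A direct comparison shows $v\preceq\iota(i)$ if and only if $\epsilon_j=0$ for every $j>i$, equivalently $i\geq p(v)$. Hence $v\downarrow\iota\cong\{p(v),p(v)+1,\dots,n\}$, which admits an initial object $p(v)$.

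The pointwise formula for the right Kan extension
\[
(\mathrm{Ran}_\iota Y)(v)\;=\;\lim_{(i,\,v\to\iota(i))\,\in\, v\downarrow\iota} Y(i)
\]
then collapses, thanks to the initial object $p(v)$, to $Y(p(v))=a_{p(v)}$, which matches the value of $\S_n(Y)$ on the object $v$. For an edge $v'\prec v''$ in $\dvedve^{\times n}$ we have $p(v')\leq p(v'')$, and the inclusion of comma categories $(v''\downarrow\iota)\subseteq(v'\downarrow\iota)$ forces, by functoriality of the pointwise limit, the induced morphism $a_{p(v')}\to a_{p(v'')}$ to be the composite $\varphi_{p(v'')}\circ\cdots\circ\varphi_{p(v')+1}$ when $p(v')<p(v'')$, and the identity when $p(v')=p(v'')$. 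This agrees exactly with the decoration rule for $\S_n(Y)$ on edges stated right after~\eqref{Jarka odejela na tyden na chalupu.}, so $\S_n(Y)=\mathrm{Ran}_\iota Y$ as functors $\dvedve^{\times n}\to\C$.

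Since precomposition with $\iota$ equals $R_n$ by the very definition of the spine, the universal property of the right Kan extension supplies the natural bijection
\[
[X,\S_n(Y)]\;=\;[X,\mathrm{Ran}_\iota Y]\;\cong\;[\iota^*X,Y]\;=\;[R_n(X),Y],
\]
which is the content of~\eqref{Jarka odjizdi na tyden na chalupu.}. The only step that requires real attention is the morphism half of the identification $\S_n(Y)=\mathrm{Ran}_\iota Y$, namely verifying that the combinatorial decoration on edges agrees with the cone-forced maps between the pointwise limits; but this reduces to the elementary monotonicity $v'\preceq v''\Rightarrow p(v')\leq p(v'')$, so no genuine obstacle arises.
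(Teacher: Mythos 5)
Your proof is correct. The paper in fact \emph{omits} the argument for this proposition (it remarks only that existence of the right Kan extension follows from general theorems and that it remains to check it is given by the explicit formula), so your computation supplies exactly the missing verification: the identification of the comma category $v\downarrow\iota$ with the subposet $\{p(v),\dots,n\}$ of $\underline{\tt n}$, the collapse of the pointwise limit onto the initial object $p(v)$, the matching of the induced maps on edges with the decoration rule for $\S_n$, and the derivation of~\eqref{Jarka odjizdi na tyden na chalupu.} from the Kan adjunction together with $\iota^*=R_n$ are all accurate and complete.
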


Notice that Proposition~\ref{Vzhledem ke strasnemu pocasi asi jedu jen
  na otocku.} implies the functoriality of $\S_n: \ttN_n(\C) \to
\Ar^n(\C)$. 
Indeed,~(\ref{Jarka odjizdi na tyden na chalupu.}) along with $R_n\S_n = \id$ gives
\[
[Y',Y''] =  [R_n\S_n\!(Y'), Y''] \cong [\S_n\!(Y'),\S_n\!(Y'')]. 
\]
The existence of a right Kan extension as 
in Proposition~\ref{Vzhledem ke strasnemu pocasi asi jedu jen na
  otocku.} 
follows from general theorems, so it only remains to prove that it
is given by our explicit formula. Since we will  not need this
fact here, we omit the proof.

\section{Nerves of lax $\Ar$-algebras}
\label{Na tu novou vladu opravdu nemam nerv ani jako kategorie.}

Let $\oA =(\Ar,\mu,\eta)$,
$\mu := \Ar\epsilon : \Ar^2 \to \Ar$, $\eta:=\beta : \id \to \Ar$,  
be the induced monad on
the category $\Acoalg$ of $\Ar$-coalgebras, 
cf.~Section~\ref{Dominik prileti v pondeli.}.  The identity
functor $\id : \Acoalg \to \Acoalg$ is a right $\oA$-functor in the
sense of~\cite[Definition~II.6.3]{EKMM}, cf.~also~\cite{stonek}, 
with the action $\rho : \id \, \Ar = \Ar \to \id$
given by the comonad counit $\epsilon : \Ar \to \id$.  
We can therefore form, for an $\oA$-algebra $\C$ with the action $a:
\oA(\C) \to \C$, the monadic bar construction
$B_\bullet(\id,\oA,\C)$ with $B_n(\id,\oA,\C) :=
\id \Ar^n(\C) = \Ar^n(\C)$~\hbox{\cite[Definition~XII.1.1]{EKMM}}.
The boundary operators are given by
\begin{equation}
\label{Bude to zitra na poletani?}
d^\Ar_i := \begin{cases}
\Ar^i\epsilon \Ar^{n-i-1} & \hbox{for} \  0 \leq i < n,
\\
\Ar^{n-1}a & \hbox{for}\ i=n,
\end{cases}
\end{equation}
and the degeneracies by $s^\Ar_i :=  \Ar^i \beta \Ar^{n-i}$, $0 \leq i \leq
n$. Let us introduce the following version of Jardine's supercoherent structure.

\begin{definition}
\label{definition:almost strict supercoherent}
An almost strict supercoherent structure $X_\bullet$ consists of
categories $X_n$, $n \geq 0$, functors $d_i : X_n\to X_{n-1}$, $0 \leq
i \leq n$, $n \geq 1$, and $s_j : X_n \to X_{n+1}$, $0 \leq j \leq n$,
such that $X_0$ equals the terminal category $\jednajedna$, and the
standard simplicial identities, that is
\begin{subequations}
\label{equation:simplicial identities}
\begin{align}
\label{hromada uhli}
d_id_j  &= d_{j-1}d_i \ \ \text{if } i < j,
\\
\label{simpl ids with s} s_is_j  &= s_{j+1}s_{i} \ \ \text{if } i \leq j, \ \hbox { and}
\\
\label{simpl ids combined} d_i s_j &= 
  \begin{cases*}
s_{j-1}  d_i   &  if $i < j$,  \\
\uu & if $i = j, i=j+1,$ \\ 
      s_j d_{i-1} &  if $i > j+1$. 
\end{cases*}
\end{align}
\end{subequations}
hold, except for~\eqref{hromada uhli} with  $i = n-1$, $j=n$, 
which is replaced  by a natural transformation
\begin{equation}
\label{Vcera jsem opet pichl galusku - ty kapitalisticke nic nevydrzi}
\alpha_n :d_{n-1} d_{n} \Longrightarrow d_{n-1} d_{n-1}
\end{equation}
between the functors $X_n \to X_{n-2}$, $n \geq 2$. We moreover require
that the obvious analogs of the diagrams on pages 115--117
of~\cite{Jardine} that involve 
transformations~(\ref{Vcera jsem opet pichl galusku - ty
  kapitalisticke nic nevydrzi})  commute. 
\end{definition}

\begin{theorem}
\label{Musim se dovalit na nakup.}
Let $(\C,a,\phi)$,  
$a: \Ar(\C) \to \C$, $\phi : a \, \Ar a \Longrightarrow a \, \Ar
\epsilon_\C$, 
be a normalized lax $\overline \Ar$-algebra. 
Then the operators $d_i^\Ar$ and $s_i^\Ar$ defined above 
make $\Ar^\bullet(\C)$ an almost strict supercoherent structure.
\end{theorem}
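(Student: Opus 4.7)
The plan is to identify the structural 2-cell $\alpha_n$ with a whiskering of the coherence $\phi$, and then to check that all remaining simplicial identities hold strictly. A direct computation with the face operators at level $n$ gives
\[
d_{n-1}\,d_n = \Ar^{n-2}(a)\circ \Ar^{n-1}(a) = \Ar^{n-2}(a\circ \Ar a),
\qquad
d_{n-1}\,d_{n-1} = \Ar^{n-2}(a)\circ \Ar^{n-1}(\epsilon_\C) = \Ar^{n-2}(a\circ \Ar\epsilon_\C),
\]
where we used 2-functoriality of $\Ar$ to bring the outer $\Ar^{n-2}$ through the composition. Since $\phi\colon a\circ \Ar a \Rightarrow a\circ \Ar\epsilon_\C$ is precisely the coherence of the lax algebra structure, the natural choice is
\[
\alpha_n := \Ar^{n-2}(\phi)\colon d_{n-1}\, d_n \Longrightarrow d_{n-1}\, d_{n-1}, \qquad n\geq 2.
\]

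Next I would verify the strict simplicial identities case by case. The relations $d_i d_j = d_{j-1} d_i$ with $0\leq i<j<n$ hold by strict associativity of the monad multiplication $\mu = \Ar\epsilon$ of the 2-monad $\overline{\Ar}$, together with 2-functoriality of $\Ar$, as for the bar construction of any strict 2-monad. The case $j = n$ and $i < n-1$ reduces to 2-naturality of $\epsilon$ applied to the coalgebra morphism $a\colon \Ar(\C)\to \C$, which gives $\epsilon_\C \circ \Ar a = a \circ \epsilon_{\Ar(\C)}$; whiskering by $\Ar^i$ and $\Ar^{n-i-2}$ yields the required equality. The degeneracy relations $s_i s_j = s_{j+1} s_i$ are handled analogously using 2-naturality of $\beta$, and the mixed identities $d_i s_j$ split in the standard way: the cases $i<j$ and $i>j+1$ follow from naturality of $\epsilon$ against $\beta$, while $i=j$ and $i=j+1$ reduce to the coalgebra counit identity $\epsilon_\C \circ \beta_\C = \uu_\C$ and the corresponding comonad axiom $\Ar \epsilon_\C \circ \delta_\C = \uu_{\Ar(\C)}$ on the free coalgebra.

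The final step is to verify the coherence diagrams of Jardine~\cite{Jardine}, pp.~115--117, which now involve $\alpha_n$. These split into two families. The associativity-type diagrams, expressing compatibility of $\alpha_{n-1}$ and $\alpha_n$ when a further face is applied, are obtained by whiskering the pentagon axiom (ii) of Definition~\ref{definition:lax_algebra} by $\Ar^{n-3}$. The unit-type diagrams, expressing that $\alpha_n$ collapses when composed with a suitable degeneracy, correspond under the same kind of whiskering to the two normalization axioms $\phi\cdot \Ar\beta_\C = \uu_a$ and $\phi\cdot \delta_\C = \uu_a$ of conditions (iii) and (iv). The principal obstacle I expect is notational rather than conceptual: each Jardine diagram at level $n$ must be matched with the $\Ar^{n-k}$-whiskering of an instance of a pentagon or normalization axiom for $\phi$ at a suitably small level $k$. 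Once this matching has been carried out explicitly at the lowest levels, the higher ones follow uniformly by 2-functoriality of $\Ar$.
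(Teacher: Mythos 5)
Your proposal matches the paper's proof: the paper simply declares the verification direct and sets $\alpha_n := \Ar^{n-2}\phi$, which is exactly the 2-cell you identify, via the same source-and-target computation $d_{n-1}d_n = \Ar^{n-2}(a\circ\Ar a)$ and $d_{n-1}d_{n-1} = \Ar^{n-2}(a\circ\Ar\epsilon_\C)$. Your case-by-case outline of the strict simplicial identities and of the Jardine coherences (as whiskerings of the lax-algebra associativity and normalization axioms) is precisely the ``direct verification'' the paper leaves to the reader.
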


\begin{proof}
Direct verification; the transformations in~(\ref{Vcera jsem opet
  pichl galusku - ty kapitalisticke nic nevydrzi}) are given as
$\alpha_n : = \Ar^{n-2} \phi$, $n \geq 2$.
\end{proof}

\begin{proposition}
The standard simplicial operators of the nerve
\[
d_i : \ttN_{n} (\C)
\to \ttN_{n-1} (\C) \ \hbox { and } \  s_i : \ttN_n(\C) \to
\ttN_{n+1}(\C)
\] 
satisfy for $1 \leq i \leq n$ 
\begin{subequations}
\begin{align}
\label{Odhodlam se zitra k jizde do Kolina?}
d_{i} &= R_{n-1} d_{i-1}^\Ar \S_{n}, 
\ \hbox { and } \
\\
\label{Vcera jsem dojel do Vraneho nad Vltavou.}
s_{i} &= R_{n+1} s_{i-1}^\Ar \S_{n}.
\end{align}
\end{subequations}
\end{proposition}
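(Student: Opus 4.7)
My plan is to verify both identities by unfolding the explicit combinatorial descriptions of $\S_n$, the spine restriction $R_n$, and the bar-construction operators $d_{i-1}^\Ar$ and $s_{i-1}^\Ar$, then tracking their effect on the decoration of the cube attached to a chain $\Phi = [f_1, \ldots, f_n]$. Recall that $\S_n(\Phi)$ decorates a vertex $(\epsilon_1, \ldots, \epsilon_n) \in \dvedve^{\times n}$ by $a_{p(\epsilon)}$ and each edge $v' \prec v''$ by the composite $f_{p(v'')} \circ \cdots \circ f_{p(v')+1}$ (an identity when $p(v') = p(v'')$). The functor $R_{n\pm 1}$ reads off the spine $(0,\ldots,0) \to (1,0,\ldots,0) \to \cdots \to (1,\ldots,1)$, so the entire argument reduces to inspecting the decorations along this spine after modification.

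For formula~\eqref{Odhodlam se zitra k jizde do Kolina?}: the counit $\epsilon = e^* \colon \Ar \to \id$ is evaluation at $0 \in \dvedve$, so $d_{i-1}^\Ar = \Ar^{i-1} \epsilon \Ar^{n-i}$ restricts a functor on $\dvedve^{\times n}$ to its codimension-one face $\{\epsilon_i = 0\}$. I would then identify each vertex of the spine of the resulting $(n-1)$-cube with its image in the ambient $n$-cube via insertion of a $0$ at position $i$. A short case analysis shows that the $k$-th spine vertex carries decoration $a_k$ for $k < i$ and $a_{k+1}$ for $k \geq i$, producing the chain $a_0 \to \cdots \to a_{i-1} \to a_{i+1} \to \cdots \to a_n$. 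The spindle's edge rule forces the middle arrow (from $a_{i-1}$ to $a_{i+1}$) to be $f_{i+1} \circ f_i$, while all other edges retain their original labels $f_k$; this is precisely $d_i \Phi$.

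For formula~\eqref{Vcera jsem dojel do Vraneho nad Vltavou.}: I would unfold $s_{i-1}^\Ar = \Ar^{i-1} \beta \Ar^{n-i+1}$ using $\beta_{\Ar^{n-i+1}(\C)} = \delta_{\Ar^{n-i}(\C)}$ together with the explicit formula for $\delta_\C(g\colon x \to y)$ from Section~\ref{Bures vladne vsem.}, whose square carries $x$ at $(0,0)$ and $y$ at the remaining three vertices. Propagating this pattern through the outer $\Ar^{i-1}$ shows that $s_{i-1}^\Ar$ inserts a new coordinate at position $i$ of the cube and satisfies
\[
s_{i-1}^\Ar F(\eta_1, \ldots, \eta_{n+1}) = F(\eta_1, \ldots, \eta_{i-1}, \eta_i \vee \eta_{i+1}, \eta_{i+2}, \ldots, \eta_{n+1}),
\]
with $\vee$ the maximum on $\{0, 1\}$. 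Evaluating on $\S_n(\Phi)$ and walking along the spine of the $(n+1)$-cube yields decorations $a_k$ for $k \leq i$ and $a_{k-1}$ for $k \geq i+1$, that is, the list $a_0, \ldots, a_i, a_i, a_{i+1}, \ldots, a_n$. The repetition forces an $\id_{a_i}$ edge between the two copies, and hence $R_{n+1} s_{i-1}^\Ar \S_n(\Phi) = [f_1, \ldots, f_i, \id_{a_i}, f_{i+1}, \ldots, f_n] = s_i \Phi$. The morphism part of both equalities follows from functoriality of each constituent and the spindle's morphism formula (cf.\ Proposition~\ref{Vzhledem ke strasnemu pocasi asi jedu jen na otocku.}); the only genuine labor is bookkeeping of coordinate shifts across the face restriction and the dimension insertion.
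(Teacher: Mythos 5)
Your proof is correct and follows essentially the same route as the paper's: both arguments unfold the explicit combinatorial description of $\S_n$ and $R_n$ and track the effect of $d^\Ar_{i-1}$ (restriction to the face where the $i$-th coordinate vanishes) and of $s^\Ar_{i-1}$ (the coordinate-merging precomposition coming from $\delta$) on the decorations along the spine of the cube. The only differences are cosmetic, namely your $1$-based indexing of the cube coordinates versus the paper's $0$-based one, and your closed formula $F(\eta_1,\dots,\eta_{i-1},\eta_i\vee\eta_{i+1},\eta_{i+2},\dots,\eta_{n+1})$ in place of the paper's verbal description of the same vertex map.
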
 

\begin{proof}
It is simple to check directly from the explicit description of
the spindle embedding that the operator $d_{n-1}^\Ar = \Ar^n \epsilon$ removes
from the decorated cube representing an object of $\Ar^{n}(\C)$ all
vertices $v=(\epsilon_0 \epsilon_1 \cdots \epsilon_{n-1})$
with  $\epsilon_{n-1} = 1$, as well as the adjacent edges. 
This operation obviously removes the last segment of  the decorated spine
\begin{equation}
\label{Do nedele by melo byt rozumne.}
\Phi : (00\cdots000) \stackrel{\varphi_1}\longrightarrow (10\cdots000)
\stackrel{\varphi_2}\longrightarrow \cdots 
\stackrel{\varphi_{n-1}}\longrightarrow (11\cdots100)
\stackrel{\varphi_{n}}\longrightarrow (11\cdots110),
\end{equation}
which is precisely what $d_{n}$ does to the nerve
$\ttN_{n}(\C)$. This proves~(\ref{Odhodlam se zitra k jizde do
  Kolina?}) for $i=n$.

Likewise $d^\Ar_{i-1} = \Ar^{i-1}\epsilon \Ar^{n-i}$ 
removes all vertices  $v=(\epsilon_0 \epsilon_1
\cdots \epsilon_{n-1})$ with  $\epsilon_{i-1} =
1$. The effect on the
decorated spine~(\ref{Do nedele by melo byt rozumne.})
is composing
$\varphi_{i+1}$ with $\varphi_{i}$, which is precisely what $d_i$
does to
$\ttN_{n}(\C)$. This 
proves~(\ref{Odhodlam se zitra k jizde do Kolina?})
 for the remaining $i$'s, $1 \leq i < n$.

Let us move to~(\ref{Vcera jsem dojel do Vraneho nad Vltavou.}).
The crucial observation is that $\beta \Ar: \Ar(\C) \to
\Ar^2(\C)$ sends $f: a \to b $ of~$\Ar(\C)$~to
\[
\xymatrix@R=1.4em{a \ar[r]^f \ar[d]_f  &b \ar[d]^\id 
\\
b \ar[r]^\id  &b
} \raisebox{-1.4em}{ \ $\in \Ar^2(\C)$.}
\] 
This gives, for $1 \leq i \leq  n$, a description of  
$s^\Ar_{i-1} =  \Ar^{i-1} \beta \Ar^{n-i+1} =  \Ar^{i-1} (\beta \Ar) \Ar^{n-i}$
that uses a map  
of the vertices of the cube $\dvedve^{\times (n+1)}$ to the vertices of the
cube $\dvedve^{\times n}$ defined as follows.
The vertex $w = (\eta_0,\ldots,\eta_{n})$ of $\dvedve^{\times (n+1)}$  
is mapped to the vertex $w_{i-1} = (\Rada
\epsilon0{n-1})$ of $\dvedve^{\times n}$ with 
\[
\epsilon_r =
\begin{cases}
\eta_r&\hbox {if $0 \leq r < i-1$,}
\\
\max\{\eta_{i-1},\eta_{i}\}&\hbox {if $r = i$, and}
\\
\eta_{r+1}&\hbox {if $i-1 < r \leq n-1$.}
 \end{cases}
\] 
Interpreting as before the objects of $\Ar^{n}(\C)$ and  
$\Ar^{n+1}(\C)$ as decorated cubes, then   \hbox{$x \in \Ar^{n}(\C)$} 
determines $s^\Ar_{i-1}(x) \in \Ar^{n+1}(\C)$  such that the vertex $w$ of
$\dvedve^{\times n}$ inherits the decoration of the corresponding vertex
$w_{i-1}$ of $\dvedve^{\times n}$, and the map between the vertices $w'$ and
$w''$ of $\dvedve^{\times (n+1)}$ is the map between the vertices $w_{i-1}'$ and
$w''_{i-1}$ in $\dvedve^{\times n}$ if such a map exists, otherwise there is no
map. The rule $x \mapsto s^\Ar_{i-1}(x)$ describes the
action the map $s^\Ar_{i-1} : \Ar^n(\C)  \to  \Ar^{n+1}(\C)$.
It is easy to check that the effect on the decorated spine~\eqref{Do
  nedele by melo byt rozumne.} is inserting the identity between
$\varphi_i$ and $\varphi_{i+1}$, which is precisely what the
degeneracy $s_i$ does to $\ttN_n(\C)$. 
\end{proof}

We extend the definition of the  boundary operators in~(\ref{Bude
  to zitra na poletani?}) by introducing an auxiliary operator $\dminus : \Ar^n(\C) \to
\Ar^{n-1}(\C)$ for $n \geq 1$. 
To this end we interpret the objects of $\Ar^{n}(\C)$
as morphisms $S : X \to Y$ in $\Ar^{n-1}(\C)$ and define $\dminus(S)$
to be the target of $S$,
\begin{subequations}
\begin{equation}
\label{V nedeli se ochladi.}
d^\Ar_{-1}(S) = Y \in \Ar^{n-1}(\C).
\end{equation} 
Likewise, we define $\sminus: \Ar^n(\C) \to \Ar^{n+1}(\C)$ by 
\begin{equation}
\label{Bude dnes bourka?}
\sminus(X) := X \stackrel \id\longrightarrow X \in \Ar^{n+1}(\C)
\end{equation}
\end{subequations}
for $X \in \Ar^n(\C)$, $n \geq 0$.

\begin{proposition}
The operators in~(\ref{V nedeli se ochladi.}) and~(\ref{Bude dnes
  bourka?}) satisfy the equalities
\begin{subequations}
\begin{align}
\label{Bude asi 35 stupnu, juj!}
d_{n-1}^\Ar d^\Ar_{-1} &= d^\Ar_{-1} d_n^\Ar, \ n \geq 2,
\\
\label{Mozna jsem mel jet uz}
d^\Ar_{n+1} s^\Ar_{-1} &=  s^\Ar_{-1} d^\Ar_n, \ n \geq 1,
\\
\label{vcera.}
s^\Ar_{n+1} s^\Ar_{-1} &=  s^\Ar_{-1}s^\Ar_n, \ n \geq 0.
\end{align}
\end{subequations}
The operators in~(\ref{V nedeli se ochladi.}) and~(\ref{Bude dnes
  bourka?}) are, moreover, related with the standard simplicial operators
$d_0  : \ttN_n(\C) \to \ttN_{n-1}(\C)$ and $s_0:
\ttN_n(\C) \to \ttN_{n+1}(\C)$ of the bar construction \, $\ttN_\bullet(\C)$ by
\begin{equation}
\label{mne to ceka}
d_0 = R_{n-1} \dminus \S_{n}, \  n \geq 1, \ \hbox { and } 
\ s_0 = R_{n+1} \sminus \S_{n}, \ n \geq 0.
\end{equation}
\end{proposition}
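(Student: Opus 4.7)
The plan is to verify \eqref{Bude asi 35 stupnu, juj!}, \eqref{Mozna jsem mel jet uz}, and \eqref{vcera.} by exploiting two elementary naturality facts: for every functor $F\colon \catD_1 \to \catD_2$ and every object $X \in \catD_1$, the target projection $\mathrm{trg}\colon \Ar(\catD) \to \catD$ satisfies $\mathrm{trg} \circ \Ar(F) = F \circ \mathrm{trg}$, and $\Ar(F)$ sends the identity morphism $\id_X$, viewed as an object of $\Ar(\catD_1)$, to $\id_{F(X)}$. These are exactly the behaviours of $\dminus$ and $\sminus$ when $\catD = \Ar^{n-1}(\C)$. Specialising $F = \Ar^{n-2}a$ and using the decomposition $d^\Ar_n = \Ar^{n-1}a = \Ar(\Ar^{n-2}a)$ yields \eqref{Bude asi 35 stupnu, juj!}. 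Writing $d^\Ar_{n+1} = \Ar^n a = \Ar(\Ar^{n-1}a)$ and applying the identity-lift naturality to $X \in \Ar^n(\C)$ gives \eqref{Mozna jsem mel jet uz}; repeating the argument with $F = \Ar^n\beta$ in place of $\Ar^{n-1}a$ delivers \eqref{vcera.}.

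For \eqref{mne to ceka} I would invoke the explicit vertex-labelling description of the spindle embedding from Section~\ref{Jarka upekla skvely jablecny zavin.}. Given a chain $\Phi \colon a_0 \xrightarrow{\varphi_1} \cdots \xrightarrow{\varphi_n} a_n \in \ttN_n(\C)$, the decoration of $\S_n(\Phi)$ at a vertex $v = (\epsilon_1,\ldots,\epsilon_n) \in \dvedve^{\times n}$ is $a_{p(v)}$, and the outermost $\Ar$-structure of $\Ar^n(\C) \cong \Ar(\Ar^{n-1}(\C))$ corresponds to the first coordinate $\epsilon_1$, as one reads off the description of $\delta_\C$ in Section~\ref{Bures vladne vsem.}. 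Hence $\dminus \S_n(\Phi)$ is the $(n-1)$-face $\{\epsilon_1 = 1\}$; after the reindexing $\epsilon_j \mapsto \epsilon_{j-1}$ this face agrees vertex-by-vertex and edge-by-edge with $\S_{n-1}$ applied to the truncated chain $a_1 \xrightarrow{\varphi_2} \cdots \xrightarrow{\varphi_n} a_n$, because the $a_0$-decorated corner has been discarded and every remaining $p$-value drops by one. Restricting to the spine via $R_{n-1}$ then recovers $a_1 \to \cdots \to a_n = d_0(\Phi)$.

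For the degeneracy half of \eqref{mne to ceka} I would use the currying isomorphism $[\dvedve, \Ar^n(\C)] \cong \Ar^{n+1}(\C)$ to write $\sminus \S_n(\Phi) = \id_{\S_n(\Phi)}$ as the functor $\dvedve^{\times (n+1)} \to \C$ that is constant in the first coordinate; thus vertex $(\epsilon_1, \ldots, \epsilon_{n+1})$ carries the decoration $a_{p(\epsilon_2,\ldots,\epsilon_{n+1})}$. Travelling along the spine $(0\cdots 0)\to (10\cdots 0)\to (110\cdots 0)\to \cdots \to (11\cdots 1)$, the first edge connects two $a_0$-decorated vertices (so is $\id_{a_0}$), while the subsequent $n$ edges reproduce $\varphi_1,\ldots,\varphi_n$ by the spindle recipe. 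Applying $R_{n+1}$ yields $a_0 \xrightarrow{\id} a_0 \xrightarrow{\varphi_1} a_1 \to \cdots \xrightarrow{\varphi_n} a_n$, which is precisely $s_0(\Phi)$.

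The main bookkeeping difficulty will be keeping the convention \emph{first $\dvedve$-factor corresponds to the outermost $\Ar$} applied consistently throughout the identification of $\dminus$ and $\sminus$ with face and degeneracy operations on decorated cubes, and checking that edge labels (not just vertex labels) match on both sides of the cube comparisons. Once this is pinned down, the content of the proposition reduces to the naturality principle of the first paragraph together with the routine vertex- and edge-chasing of the second and third.
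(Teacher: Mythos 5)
Your proposal is correct and follows essentially the same route as the paper: the three compatibility identities are obtained exactly as in the paper's proof, from the functoriality of $d^\Ar_{n-1}, d^\Ar_n, d^\Ar_{n+1}, s^\Ar_n, s^\Ar_{n+1}$ combined with the observation that $\dminus$ and $\sminus$ are the target and identity-inclusion operations, which commute with $\Ar(F)$ for any functor $F$. For~\eqref{mne to ceka} the paper merely states that it follows from the definitions of $\dminus$ and $\sminus$; your vertex- and spine-chasing via the explicit description of the spindle embedding supplies precisely those details and is consistent with the paper's left-to-right convention for morphisms in $\Ar^n(\C)$.
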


\begin{proof}
Equation~(\ref{Bude asi 35 stupnu, juj!})
follows from the functoriality of $d_n^\Ar$ and 
$d^\Ar_{n-1}$. Indeed,  
let  $S: X \to Y$ be a morphism in $\Ar^{n-1}(\C)$
representing an object of $\Ar^n(\C)$.
Then $d^\Ar_n(S)$ is the induced map $d^\Ar_{n-1}(S) : d^\Ar_{n-1}(X) \to
d^\Ar_{n-1}(Y)$, so  $d_{-1}^\Ar d^\Ar_n(S)= d^\Ar_{n-1}(Y)$ by~(\ref{V nedeli se
  ochladi.}). On the other hand, $d^\Ar_{-1}(S) = Y$, so $d^\Ar_{n-1} 
\dminus(S) = d_{n-1}^\Ar(Y)$, which 
verifies~(\ref{Bude asi 35 stupnu, juj!}).
The equalities~(\ref{Mozna jsem mel jet uz})
and~(\ref{vcera.}) follow in the same manner from the functoriality of
$d^\Ar_{n}, d^\Ar_{n+1}, s_n^\Ar$ and $s^\Ar_{n+1}$.
We leave the details to the reader. Equation~(\ref{mne to ceka}) easily
follows from the definition of the operators $\dminus$ and $\sminus$.
\end{proof}

\begin{corollary}
\label{corollary:Martinuv supervysledek}
Under the assumptions of Theorem~\ref{Musim se dovalit na nakup.},
the nerve\, $\ttN_\bullet(\C)$ is the upper d\'ecalage of an almost
strict supercoherent
structure \, ${\widetilde \ttN}_\bullet(\C)$. 
\end{corollary}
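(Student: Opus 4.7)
The plan is to construct $\widetilde{\ttN}_\bullet(\C)$ by shifting the nerve up one degree and supplying a new top face and degeneracy operator at each level, drawn from the almost strict supercoherent structure of $\Ar^\bullet(\C)$ guaranteed by Theorem~\ref{Musim se dovalit na nakup.}. Concretely, I would set $\widetilde{\ttN}_0(\C) := \jednajedna$ and $\widetilde{\ttN}_n(\C) := \ttN_{n-1}(\C)$ for $n\geq 1$. For $n\geq 2$ and $0\leq i\leq n-1$, take $\widetilde{d}_i = d_i$ and $\widetilde{s}_i = s_i$ to be the standard simplicial operators of $\ttN_{n-1}(\C)$; for the new top index $i=n$, set
\[
\widetilde{d}_n := R_{n-2}\, \Ar^{n-2}a\, \S_{n-1}:\ttN_{n-1}(\C)\to \ttN_{n-2}(\C),\qquad
\widetilde{s}_n := R_n\, \Ar^{n-1}\beta\, \S_{n-1}:\ttN_{n-1}(\C)\to \ttN_n(\C).
\]
At the base $n=1$, both $\widetilde{d}_0$ and $\widetilde{d}_1$ are the unique functor $\C\to\jednajedna$, and $\widetilde{s}_0:\jednajedna\to \C$ picks out the zero object.

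With this construction, the d\'ecalage identification $\dec(\widetilde{\ttN}(\C))_\bullet\cong \ttN_\bullet(\C)$ is immediate: we have $\dec(\widetilde{\ttN}(\C))_n = \widetilde{\ttN}_{n+1} = \ttN_n(\C)$, and the surviving operators $\widetilde{d}_0,\ldots,\widetilde{d}_n$ and $\widetilde{s}_0,\ldots,\widetilde{s}_n$ on $\widetilde{\ttN}_{n+1}$ are by definition the standard simplicial data of $\ttN_n(\C)$, while it is precisely the new top operators $\widetilde{d}_{n+1}$ and $\widetilde{s}_{n+1}$ that are forgotten.

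The remaining task is to verify the axioms of Definition~\ref{definition:almost strict supercoherent}. The Proposition immediately preceding the corollary provides a uniform expression $\widetilde{d}_i = R_{n-2}\, d^\Ar_{i-1}\, \S_{n-1}$ for all $0\leq i\leq n$ (with the auxiliary $d^\Ar_{-1}$ at $i=0$ and the top $d^\Ar_{n-1}=\Ar^{n-2}a$ at $i=n$), and analogously for degeneracies. Identities involving only the standard non-top operators hold because $\ttN_\bullet(\C)$ is itself a simplicial object. Identities involving at least one new top operator are reduced, via this uniform expression, to the corresponding identities on $\Ar^\bullet(\C)$ supplied by Theorem~\ref{Musim se dovalit na nakup.}; the intermediate factor $\S_{n-2}R_{n-2}$ produced by composing two such operators is absorbed using the combinatorial observation, readable directly from the explicit spindle formula, that the $\epsilon$-type operators $d^\Ar_{-1},d^\Ar_0,\ldots,d^\Ar_{n-2}$ preserve the spindle image (so $\S R$ acts as the identity on that image). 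The exceptional 2-cell at $(i,j)=(n-1,n)$ is defined by conjugating the $\Ar^\bullet$-level 2-cell $\Ar^{n-3}\phi$ through the spindle,
\[
\alpha_n := R_{n-3}\cdot (\Ar^{n-3}\phi)\cdot \S_{n-1},
\]
and its higher coherences descend from those of the lax algebra coherence $\phi$ in the same fashion.

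The main obstacle is handling the simplicial identities in which a composition places the algebra-action operator $d^\Ar_{n-1}=\Ar^{n-2}a$ in a position where its output is fed into a further operator, because $a$ does not in general preserve the spindle image (its values are kernels, not members of the original chain). One resolves this either by unpacking $\widetilde{d}_n$ explicitly, so that applied to a chain $[f_1,\ldots,f_{n-1}]$ it returns the chain of kernels $\ker(f_{n-1}\cdots f_1)\to \cdots \to \ker(f_{n-1})$, and then verifying the identities directly from the universal property of kernels; or by invoking naturality of $a$ together with the coherences of Theorem~\ref{Musim se dovalit na nakup.} at the relevant level of $\Ar^\bullet(\C)$. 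Once this delicate subcase is settled, the rest is routine combinatorial bookkeeping.
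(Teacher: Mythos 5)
Your construction coincides with the paper's proof: the same shift $\widetilde{\ttN}_n(\C)=\ttN_{n-1}(\C)$ with $\widetilde{\ttN}_0(\C)=\jednajedna$, the same new top operators $R_{n-2}\,d^\Ar_{n-1}\,\S_{n-1}$ and $R_n\,s^\Ar_{n-1}\,\S_{n-1}$, the same coherence cell obtained by conjugating $\Ar^{n-3}\phi$ through the spindle, and the same reduction of the identities involving $d_0,s_0$ to the auxiliary operators $d^\Ar_{-1},s^\Ar_{-1}$. The one ``obstacle'' you flag is not actually there: the top operator $\Ar^{n-2}a$ does carry spindles to spindles (namely to the spindle of the chain of kernels, since $a$ sends the identity edges and commuting faces of a spindle to identity edges and commuting faces), which is exactly why the paper can describe all the new operators as restrictions along the spindle embedding.
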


\begin{proof}
The supercoherent structure 
${\widetilde \ttN}_\bullet(\C)$ is defined as
$\ttN_{\bullet-1}(\C)$, extended by setting
${\ttN}_{-1}(\C) : = {\jednajedna}$, the terminal
category. The additional simplicial operators are defined by
\begin{subequations}
\begin{align}
\label{Dam si zmrzlinu.}
d_n &:= R_{n-2} d^\Ar_{n-1} \S_{n-1} : \ttN_{n-1}(\C) \longrightarrow
\ttN_{n-2}(\C)  \hbox{ for $n \geq 2$, and }
\\
\label{A pak jeste kornout!}
s_n &:= R_n s_{n-1}^\Ar \S_{n-1}
:  \ttN_{n-1}(\C) \longrightarrow
\ttN_{n}(\C) \hbox { for $n \geq 1$}.
\end{align}
\end{subequations}
The operators $d_i: {\ttN}_0(\C) \to
{\ttN}_{-1}(\C)$, $i = 1,2$, are the unique morphisms to the terminal
category, and $s_0 : {\ttN}_{-1}(\C) \to
{\ttN}_{0}(\C)$ sends the unique object of ${\jednajedna}$ to
the chosen terminal object~$0$ of
$\C$.

The spindle embedding identifies $\ttN_\bullet(\C)$ with its image
in $\Ar^\bullet(\C)$. It follows from~(\ref{Odhodlam se zitra k jizde
  do Kolina?})--(\ref{Vcera jsem dojel do Vraneho nad Vltavou.}), 
and the defining equations~(\ref{Dam si zmrzlinu.})--(\ref{A pak
  jeste kornout!}) 
that, under this identification, the operators
\begin{equation}
\label{Jake bude pocasi?}
\Rada d1n : \ttN_{n-1}(\C) \longrightarrow  \ttN_{n-2}(\C), n
\geq 2,
\ \hbox{and} \
\Rada s1n : \ttN_{n-1}(\C) \longrightarrow  \ttN_{n}(\C), \ n \geq 1,
\end{equation} 
are the restrictions of the operators
\[
\rada {d_0^\Ar}{d_{n-1}^\Ar} : \Ar^{n-1}(\C) \longrightarrow  
\Ar^{n-2}(\C), \ n \geq 2,
\ \hbox{and} \
\rada {s_0^\Ar}{s_{n-1}^\Ar} : \Ar^{n-1}(\C) \longrightarrow
\Ar^{n}(\C), 
n \geq 1.
\] 
This shows that the extended nerve ${\widetilde
  \ttN}_\bullet(\C) = \ttN_{\bullet-1}(\C)$
is an almost strict supercoherent structure, with the 
transformations in~(\ref{Vcera jsem opet pichl galusku - ty kapitalisticke
  nic nevydrzi}) given by 
\[
\alpha_n := R_{n-3} \alpha_{n-1}^\Ar \S_{n-1} : d^\Ar_{n-2} d^\Ar_{n-1} 
\longrightarrow  d^\Ar_{n-2} d^\Ar_{n-2}, \     n \geq 3,
\] 
where $\alpha^\Ar_{n-1} :d_{n-2}^\Ar d_{n-1}^\Ar \longrightarrow d_{n-2}^\Ar d_{n-2}^\Ar$
are the constraints of the almost strict 
supercoherent structure $\Ar^\bullet(\C)$.

We need also to  verify that $d_0$ and
$s_0$ satisfy the required relations with respect to the operators
in~(\ref{Jake bude pocasi?}). Since $\Rada d1{n-1}$ and $\Rada
s1{n-1}$ are the standard simplicial operators of the bar
construction, we only need to explore the relations that involve
$d_n : \ttN_{n-1}(\C) \to  \ttN_{n-2}(\C)$, $n \geq
2$, and  $s_n : \ttN_{n-1}(\C) \to  \ttN_{n}(\C)$, $n \geq 1$.
However,  they follow
from~(\ref{Bude asi 35 stupnu, juj!})--(\ref{vcera.}) and the fact
that $d_0$ and $s_0$ are the restrictions of $\dminus$ and $\sminus$
to $\ttN(\C)_\bullet$ by~(\ref{mne to ceka}). 

The last thing to be checked are the relations/identities involving \,
${\widetilde \ttN}_0(\C) =  \ttN_{-1}(\C)$ as the target. They
are however satisfied automatically, since ${\widetilde
  \ttN}_0(\C)$ is the terminal category by definition. 
\end{proof}

The creation of the extended simplicial structure on the
nerve $\ttN_\bullet(\C)$ is captured by the diagram
\[
\xymatrix{\Ar^{n-1}(\C) : \hspace{-1em}& \ar@{|->}[d]   \dminus &\ar@{|->}[d]  d^\Ar_0 
&\ar@{|->}[d]   d^\Ar_1 & \cdots & d^\Ar_{n-2} \ar@{|->}[d] &\ar@{|->}[d] 
\  d^\Ar_{n-1} .
\\
{\widetilde \ttN}_n(\C) 
\ar@{^{(}->}[u]_{\S_n}:  \hspace{-1.8em} 
& d_0 & d_1 &  d_2 & \cdots & d_{n-1} & 
  d_{n}
}
\]
In that diagram, $\widetilde \ttN_n(\C)$ is identified with a subcategory of
$\Ar^{n-1}(\C)$ via the spine embedding, and the arrows $\longmapsto$ denote
restrictions. We emphasize that $\dminus$ and $\sminus$ 
does not extend the simplicial structure of $\Ar^\bullet(\C)$. 
We used these auxiliary operators 
since they satisfy~(\ref{Bude asi 35 stupnu, juj!})--(\ref{vcera.})
and restrict to $d_0$ and $s_0$ by~(\ref{mne to ceka}).  

\section{Simplicial characterization of kernels}
\label{Jarka si koupila novy mobil.}

The iteration of the functor $\Ar$ defined in~\eqref{Ztratil jsem
  klicek od kola - porad neco ztracim.} produces a calculus of arrows, squares
and cubes, while the functorial nerve gives a calculus of
arrows, triangles, and simplices. This section adapts
Theorem~\ref{theorem:kernels} to the simplicial setting.  So, let $\C$ be a
pointed category and $\ttN_\bullet(\C)$ its functorial simplicial nerve
recalled in Section~\ref{Jarka upekla skvely jablecny zavin.}. 
The zero object $0$ extends the nerve by
\begin{enumerate}
\item[--] 
the terminal category $\NN_{-1}(\C):=\{0\}$,
\item[--] 
the functor $s_0\colon\NN_{-1}(\C) \to \NN_{0}(\C)$ which picks 
the zero object,
\item[--] 
the functors $d_0,d_1\colon \NN_0(\C) \to \NN_{-1}(\C)$ defined as 
the terminal functors to
$\NN_{-1}(\C)=\{0\}$, and
\item[--] 
the additional functors 
$ s_{n+1}\colon \NN_{n}(\C) \to \NN_{n+1}(\C)$, $n\geq 0$, 
given by the post-composition with the terminal morphism to $0$, i.e.~sending~\eqref{Mozna odjedu o den pozdeji.} to
\[[f_1,\ldots ,f_n ,\,!\,]=
\xymatrix@1{\bullet \ar[r]^{f_1} & \bullet \ar[r]^{f_2} 
& \cdots& \cdots  \bullet \ar[r]^{f_{n-1}}  
& \bullet \ar[r]^{f_n} & \bullet \ar[r]^{!}   \ & \ 0 
} \in \ttN_{n+1}(\C).
\] 
\end{enumerate}
The result is the structure
\[
\xymatrix@C=4.86em@R=8em{\ttN_{-1}(\C) 
\ar@{->}[r]|-{s_0} \ar@/^1em/@{<-}[r]|-{d_1}   
 \ar@/_1em/@{<-}[r]|-{d_0}   
&\ttN_{0}(\C) 
\ar@{<-}[r]|-{d_1} \ar@/^1em/[r]|-{s_1}  \ar@/_1em/[r]|-{s_0} 
\ar@/_2em/@{<-}[r]|-{d_0}    
& \ttN_{1}(\C)  
\ar[r]|-{s_1} \ar@/^1em/@{<-}[r]|-{d_2}   
 \ar@/_1em/@{<-}[r]|-{d_1}  \ar@/^2em/[r]|-{s_2}  
\ar@/_2em/[r]|-{s_0}  
 \ar@/_3em/@{<-}[r]|-{d_0}   
&\ttN_{2}(\C)  \ar@{}[r]|{\hbox {\Large $\ldots$}} &
}
\]
It can be easily checked that these new operators are 
compatible with the rest of simplicial
operators of the nerve, i.e.~that they satisfy the standard simplicial
identities recalled in~\eqref{equation:simplicial identities}
whenever they make sense.
We define a family of natural transformations
\[
\nu\colon \uu_{\NN_{n+1}(\C)}\Longrightarrow s_{n+1} d_{n+1}, \ n \geq 0,
\]
between the functors $\NN_{n+1}(\C)\to \NN_{n+1}(\C)$ as
follows. For an object $T=[f_1,\ldots,f_{n+1}]\in\NN_{n+1}(\C)$ the
corresponding component $\nu_T$ is the morphism
\[
\nu_T=(\uu,\ldots,\uu,\ !\ )\colon[f_1,\ldots,f_{n+1}] \to
[f_1,\ldots,f_n,\ !\ ]\] 
in $\NN_{n+1}(\C)$, that is the morphism
\begin{equation}
\label{Na Zimni skole spousta snehu a ja tam nebudu.}
\raisebox{-1.7em}{$\nu_T=$} \hskip 1.5em
\xymatrix{
\bullet \ar[r]^{f_1} \ar[d]^{\id}  & \ar[d]^{\id} \bullet \ar[r]^{f_2}  
&\ar[d]^{\id}  \bullet
\ar@{}[r]|{\hbox{\large $\ldots$}}
&\bullet\ar[d]^{\id}  \ar[r]^{f_n} &\bullet\ar[d]^{\id}
\ar[r]^{f_{n+1}}
&\bullet\ar[d]^{!}  
\\
\bullet\ar[r]^{f_1} & \bullet\ar[r]^{f_2} & \bullet
\ar@{}[r]|{\hbox{\large $\ldots$}} &\bullet\ar[r]^{f_n} &\bullet\ar[r]^{!} &0 \tecka
}
\end{equation}
\begin{proposition}
\label{proposition:kernels_as_functor_II}
Let \/ $\C$ be a pointed category and let there be 
two additional functors
\begin{subequations}
\begin{equation}
\label{Greta mne spolehlive dovezla.}
d_3\colon \NN_2(\C)\longrightarrow\NN_1(\C) \ \text{ and } \ d_2\colon
\NN_1(\C)\longrightarrow\NN_0(\C),
\end{equation} 
compatible with the simplicial operators considered above,
i.e.~satisfying the equations 
in~\eqref{equation:simplicial identities}. If~further
\begin{equation}
\label{Mozna pojedu az v utery.}
d_2\big(d_3(\nu_{s_1(f)})\big)=\uu_{d_2(f)},
\end{equation}
\end{subequations}
then
\[
\kappa_f:= d_2(\nu_f)\colon d_2(f)\longrightarrow a,
\] 
where $f: a \to b$ is a morphism of \/ $\C$,
equips $\C$ with kernels.
\end{proposition}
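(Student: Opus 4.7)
The plan is to directly verify conditions (i)--(iii) of Definition~\ref{definition:algebraic_kernels} for the choice $\kappa_f := d_2(\nu_f)$, using the simplicial identities that $d_2, d_3$ are assumed to satisfy together with the fact that morphisms of the form $!_x \to \alpha$ in $\NN_1 = \Ar(\C)$ correspond bijectively (via the top edge) to morphisms $x \to \dom(\alpha)$ in $\C$ that vanish after composition with $\alpha$.

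First I would dispatch normalization (ii). The identity $d_2 s_1 = \uu$ gives $d_2(!_a) = a$, so $\kappa_f = d_2(\nu_f)$ really is a morphism $d_2(f) \to a$. Since $\uu_a = s_0(a)$ and $d_2 s_0 = s_0 d_1$ (factoring through $\NN_{-1}$), we have $d_2(\uu_a) = 0$, and thus $\kappa_{\uu_a}\colon 0 \to a$ is forced to be $!^a$ by uniqueness. For $\kappa_{!_a}$, direct inspection of the formula in~(\ref{Na Zimni skole spousta snehu a ja tam nebudu.}) shows $\nu_{!_a} = \uu_{!_a}$, since its last horizontal is $!_0 = \id_0$ and the others are already identities; functoriality of $d_2$ then yields $\kappa_{!_a} = \uu_a$.

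Next I would tackle (i) by computing $d_3(\nu_{s_1 f})$. The simplicial identities $d_3 s_1 = s_1 d_2$ and $d_3 s_2 = \uu$ identify this as a morphism $!_{d_2 f} \to f$ in $\NN_1$, that is, a commutative square in $\C$ with left edge $!_{d_2 f}$ and right edge $f$. Its bottom edge is $d_0 d_3(\nu_{s_1 f}) = d_2 d_0(\nu_{s_1 f}) = d_2(\nu_{\uu_b}) = \kappa_{\uu_b} = !^b$ (by the previous step). Its top edge is $d_1 d_3(\nu_{s_1 f}) = d_2 d_1(\nu_{s_1 f})$; erasing the middle column of the ladder defining $\nu_{s_1 f}$ shows $d_1(\nu_{s_1 f}) = \nu_f$, so the top edge equals $d_2(\nu_f) = \kappa_f$. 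Commutativity of this square then reads $f \kappa_f = !^b \circ !_{d_2 f} = 0$.

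For the universal property (iii), given $g\colon c \to a$ with $fg = 0$, the pair $(g, !^b)$ assembles into a morphism $T\colon !_c \to f$ in $\NN_1$, and I would set $h := d_2(T)$. Existence: $\kappa_f h = d_2(\nu_f \circ T)$ by functoriality; reading off the edges of $\nu_f \circ T$ gives top $\id_a \circ g = g$ and bottom $!_b \circ !^b = \id_0$, so $\nu_f \circ T = s_1(g)$ and hence $\kappa_f h = d_2(s_1(g)) = g$. Uniqueness: for any $h'$ with $\kappa_f h' = g$, the composite $d_3(\nu_{s_1 f}) \circ s_1(h')\colon !_c \to f$ has top edge $\kappa_f h' = g$ and bottom edge $!^b$, and therefore equals $T$. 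Applying $d_2$ and invoking hypothesis~(\ref{Mozna pojedu az v utery.}) together with $d_2 s_1 = \uu$, the left-hand side collapses to $\uu_{d_2 f} \circ h' = h'$, so $h' = d_2(T) = h$. The main obstacle is the combinatorial bookkeeping: the bijection between morphisms $!_x \to \alpha$ in $\Ar(\C)$ and null-composing morphisms must be read off carefully from each square produced by $d_3$, and the simplicial identities listed above must be applied to identify edges. The hypothesis~(\ref{Mozna pojedu az v utery.}) enters at exactly one step---the uniqueness half of (iii)---which explains why it is precisely the normalization needed to force $h$ to be unique.
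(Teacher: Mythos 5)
Your proof is correct, and your normalization and null-composition steps coincide with the paper's: your square $d_3(\nu_{s_1(f)})$, with edges $\kappa_f$ and $!^b$ read off via $d_1d_3=d_2d_1$ and $d_0d_3=d_2d_0$, is exactly the paper's $\kappa_{[f,\uu_b]}$, since $[f,\uu_b]=s_1(f)$. Where you genuinely diverge is in the universal property. The paper first proves an auxiliary lemma (Lemma~\ref{lemma:kappa_0_retr}) producing a retraction $r$ of $\kappa_0$, defines the induced morphism as $d_3[g,f]\circ r$ using the value of $d_3$ on the $2$-chain $[g,f]$, and establishes uniqueness by applying $d_3$ to a prism $(\uu_c,\kappa_f,!)\colon [h,\,!\,]\to[g,f]$ in $\NN_2(\C)$. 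You instead set $h:=d_2(T)$ for the square $T=(g,!^b)\colon !_c\to f$ and verify $\kappa_f h=g$ by the single identity $\nu_f\circ T=s_1(g)$ in $\NN_1(\C)$; for uniqueness you observe $d_3(\nu_{s_1(f)})\circ s_1(h')=T$ in $\NN_1(\C)$ and apply $d_2$. This buys real economy: no retraction lemma, no evaluation of $d_3$ on $2$-chains other than $\nu_{s_1(f)}$, and all bookkeeping stays one simplicial level lower. Both arguments invoke hypothesis~\eqref{Mozna pojedu az v utery.} at exactly the same place, namely the uniqueness half of the universal property, and the simplicial identities you use ($d_2s_1=\uu$, $d_2s_0=s_0d_1$, $d_3s_1=s_1d_2$, $d_3s_2=\uu$, and the two face relations above) are all among those assumed in the statement.
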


The structure in the proposition therefore involves the operators as in
\[
\xymatrix@C=4.86em@R=8em{\ttN_{-1}(\C) 
\ar@{->}[r]|-{s_0} \ar@/^1em/@{<-}[r]|-{d_1}   
 \ar@/_1em/@{<-}[r]|-{d_0}   
&\ttN_{0}(\C) 
\ar@{<-}[r]|-{d_1} \ar@/^1em/[r]|-{s_1}  \ar@/_1em/[r]|-{s_0} 
\ar@/^2em/@{<-}[r]|-{d_2}    
\ar@/_2em/@{<-}[r]|-{d_0}    
& \ttN_{1}(\C)  
\ar[r]|-{s_1} \ar@/^1em/@{<-}[r]|-{d_2}   
 \ar@/_1em/@{<-}[r]|-{d_1}  \ar@/^2em/[r]|-{s_2}  
\ar@/_2em/[r]|-{s_0}  \ar@/^3em/@{<-}[r]|-{d_3}
 \ar@/_3em/@{<-}[r]|-{d_0}   
&\ttN_{2}(\C)  \ar@{}[r]|{\hbox {\Large $\ldots$}} &
}
\]
It will be convenient for the proof of
Proposition~\ref{proposition:kernels_as_functor_II} to notice that the value
$\kappa_{[g,f]}:=d_3(\nu_{[g,f]})$ 
at a composable pair $[g,f]\in \NN_2(\C)$ is
given by the commutative square
\begin{equation}
\label{Pozitri jedu zase do Bonnu.}
\raisebox{-2em}{$\kappa_{[g,f]}=$} \hskip 1em
\xymatrix@C=3em{
d_2(fg) \ar[r]^{d_3[g,f]}
\ar[d]_{\kappa_{fg}}   & d_2(f) \ar[d]^{\kappa_f}
\\
a \ar[r]^g   & b .
}
\end{equation}
The vertical edges $\kappa_{fg}$ and $\kappa_{f}$ are determined by
the simplicial
identities, so $\kappa_{[g,f]}$ is a morphism 
\[
\kappa_{[g,f]}=(\kappa_{fg},\kappa_{f})\colon d_3[g,f]\to g.
\]
We will also need the for 
the proof of Proposition~\ref{proposition:kernels_as_functor_II} the following

\begin{lemma} 
\label{lemma:kappa_0_retr}
For any zero morphism $0\colon a\to b$, the morphism $\kappa_0\colon
d_2(0)\to a$ has a retraction, i.e.~there is a morphism $r\colon a\to
d_2(0) $ such that  $\kappa_0r=\uu_a$.
\end{lemma}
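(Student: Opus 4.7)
The strategy is to lift the defining identity $!^b\circ !_a = 0$ of the zero morphism to a morphism in $\NN_1(\C)$ and then exploit the naturality of $\kappa=d_2\nu$. Concretely, since $!^b\circ !_a = 0 = 0\circ \uu_a$, the square
$$\Sigma := (\uu_a,!^b)\colon !_a \longrightarrow 0$$
is a well-defined morphism in $\NN_1(\C)$. Applying the functor $d_2\colon \NN_1(\C)\to \NN_0(\C)=\C$ yields a morphism $d_2(\Sigma)\colon d_2(!_a)\to d_2(0)$, and the simplicial identity $d_2 s_1 = \uu_{\NN_0(\C)}$ (one of the relations required in Proposition~\ref{proposition:kernels_as_functor_II}) gives $d_2(!_a)=d_2\bigl(s_1(a)\bigr)=a$. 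I therefore take as candidate retraction the arrow $r := d_2(\Sigma)\colon a\longrightarrow d_2(0)$.

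To verify that $\kappa_0\circ r = \uu_a$, I would observe that $\kappa = d_2\nu$ is a natural transformation of functors $\NN_1(\C) \to \C$, namely $\kappa\colon d_2 \Rightarrow d_2 s_1 d_1 = d_1$, where the last equality again uses $d_2 s_1 = \uu$. Evaluating the naturality square of $\kappa$ at the morphism $\Sigma$ yields the identity
$$\kappa_0\circ d_2(\Sigma) \;=\; d_1(\Sigma)\circ \kappa_{!_a}.$$
Since $d_1$ applied to a commutative square in $\NN_1(\C)$ extracts its left-hand vertical, we have $d_1(\Sigma)=\uu_a$, so this reduces to $\kappa_0\circ r = \kappa_{!_a}$.

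It then remains to verify $\kappa_{!_a}=\uu_a$, which I would deduce from the explicit formula~(\ref{Na Zimni skole spousta snehu a ja tam nebudu.}) for $\nu$: since the codomain of $!_a$ is already the zero object, its \textquotedblleft terminal column\textquotedblright\ $!$ specialises to $\uu_0$, giving $\nu_{!_a} = (\uu_a,\uu_0) = \uu_{!_a}$. Hence $\kappa_{!_a}=d_2(\uu_{!_a}) = \uu_{d_2(!_a)}=\uu_a$, and combining this with the previous paragraph yields $\kappa_0\circ r = \uu_a$. The whole argument is essentially bookkeeping with simplicial identities, and I do not anticipate any substantial obstacle; the only subtlety worth flagging is that the relation $d_2 s_1 = \uu$ is invoked twice---once to identify the source of $r$ with $a$, and once to reduce $\kappa_{!_a}$ to $\uu_a$.
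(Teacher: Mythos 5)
Your proposal is correct and essentially reproduces the paper's argument: the candidate retraction $r=d_2\bigl((\uu_a,!^b)\colon !_a\to 0\bigr)$ is exactly the paper's $d_2(P)$, and your verification via the naturality of $\kappa=d_2\nu$ at that square is just a repackaging of the paper's direct computation $d_2(\nu_0\circ P)=d_2(\uu_{!_a})=\uu_a$, both ultimately resting on the functoriality of $d_2$ and the normalization $\kappa_{!_a}=\uu_a$. No gaps.
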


\begin{proof}
The retraction $r$ is given as $r: =d_2(P)$ for the morphism
$P=(\uu_a,!^b)\colon !_a\to 0$, represented by the left square of the
diagram below. Applying 
the functoriality of $d_2\colon \NN_1(\C)\to \NN_0(\C)$ the
following composite of squares~$P$ (left) and $\nu_0$ (right) in
$\NN_1(\C)$
\[
\xymatrix@R=1em@C=1.2em{
a \ar[dd]_{!_a} 
\ar[rr]_{\id_a} \ar@/^1.5em/[rrrr]^{\id_a}   && \ar[dd]_0
\ar[rr]_{\id_a} a &&a \ar[dd]^{!_a}
\\
\ar@{}[rr]|P  && \ar@{}[rr]|{\nu_0}&&
\\
0 \ar[rr]^{!^b}\ar@/_1.5em/[rrrr]_{\id_0} && b\ar[rr]^{!_b} && 0
}
\]
we compute that
\[
\kappa_0\circ r=d_2(\nu_0)\circ d_2(S)=d_2(\nu_0\circ
S)=d_2(\uu_{!_a})=\uu_{d_2(!_a)}=\uu_a,
\] 
so $r$ is indeed a retraction of $\kappa_0$.
\end{proof}

\begin{proof}[Proof of Proposition~\ref{proposition:kernels_as_functor_II}.] 
We will show that the morphisms $\kappa_f\colon d_2(f)\to a$ give kernels.  First observe, that the simplicial identities
give 
\[
d_2(\uu_a)=d_2(s_0(a))=s_0(d_1(a))=s_0(0)=0,
\] 
and hence
$\kappa_{\uu_a}=\ !^a\colon 0\to a$. Similarly, $d_2(!_a)=d_2(s_1(a))=a$,
and, since the functor $d_2$ preserves
identities,
it~follows that  
\[
\kappa_{!_a}=d_2(\uu_{!_a})=\uu_{d_2(!_a)}=\uu_a.
\]
This gives the two normalizing conditions in (ii) of
Definition~\ref{definition:algebraic_kernels}.
For any morphism $f\colon a \to b$, the equality
$f\kappa_f=0$ required by item (i) of
Definition~\ref{definition:algebraic_kernels}  follows from the commutativity of the square~$\kappa_{[f,\uu_b]}$ in
\[
\xymatrix@R=.7em{
d_2(f) \ar[r]^(.55){\kappa_f} \ar[dd]_{!} & a  \ar@/^/[rd]^f \ar[dd]^f
\\
\ar@{}[r]|{\kappa_{[f,\uu_b]}}&& b \tecka
\\
0 \ar[r]^{!}&b \ar@/_/[ur]_{\id_b}
}
\]

Let us attend to item (i) of
Definition~\ref{definition:algebraic_kernels}. Assume therefore 
that $g\colon c\to a$ is a morphism with $fg=0$. The induced morphism
$\Tilde{g}\colon c\to d_2(f)$ to the kernel is given by the
commutative square $\kappa_{[g,f]}$ in~\eqref{Pozitri jedu zase do
  Bonnu.} as the morphism $d_3[g,f]$ pre-composed
with the retraction $r$ of Lemma~\ref{lemma:kappa_0_retr}, that is
\[
\Tilde{g}:=d_3[g,f]\circ r.
\]
In other words, $\Tilde g$ is the diagonal of the square in the diagram
\[
\xymatrix@R=1em@C=3em{
d_2(0) \ar@/^/[r]^{\kappa_0} \ar[dd]_{d_3[g,f]}   
& c \ar@{-->}[ldd]^{\Tilde g}    \ar@/^/[l]^r  \ar@/^/[rd]^0 \ar[dd]^g   &
\\
&& b.
\\
d_2(f)   \ar[r]^{\kappa_f} & a \ar@/_/[ur]^f
}
\]
The equality $\kappa_f {\Tilde g} = g$ is easy to verify, since
\[
\kappa_f \circ\Tilde{g} = \kappa_f\circ d_3[g,f] \circ r=g \circ\kappa_0\circ r=g.
\]
To show the uniqueness of $\Tilde{g}$, note first that for an object
$f\colon a\to b\in\NN_1(\C)$, $s_1(f)$ is the composable pair
$s_1(f)=[f,\uu_b]=a\xrightarrow{f} b\xrightarrow{\uu}b$ and, by~(\ref{Mozna pojedu az v utery.}),
\begin{equation}
\label{Jarka opet upekla zavin, vezmu si ho na cestu.}
d_2(\kappa_{[f,\id_b]})=d_2(\kappa_{s_1(f)})=d_2(d_3(\nu_{s_1(f)}))=\uu_{d_2(f)},
\end{equation}
which is recorded by the following diagram.
\[
\xymatrix@C=1em{
d_2(f) \ar[rr]^\id \ar[d]_{d_2(\kappa_{s_1(f)}) = \id}  
&& d_2(f)  \ar[d]_{\kappa_f}  \ar[rr]^{!} 
&\ar@{}[d]|{\kappa_{s_1(f)}}&0 \ar[d]^{!}
\\
d_2(f)  \ar[rr]^{\kappa_f}   && \ar[rr]^f  a  \ar@/_.9em/[rd]^f   
&& b  \ar@/^.8em/[ld]_\id 
\\
&&&b
}
\]
Let $h\colon c\to d_2(f)$ be another morphism with $\kappa_fh=g$. The
uniqueness $h=\Tilde{g}$ follows from the commutativity of the
leftmost square of the diagram
\[
\xymatrix{
& d_2(f) \ar[rr]^{\id}  \ar[dd]^(.3){d_2(\kappa_{[f,\id]}) = \id}|\hole   && d_2(f) \ar[dd]^(.7){\kappa_f}|(.45)\hole \ar[r]^{!} 
&0 \ar[dd]^{!}
\\
c \ar@{-->}[rd]^{\Tilde g} \ar[dd]_r \ar[ru]^h \ar[rr]^(.4){\id_c} && c \ar[dd]^(.7){\id_c}
\ar@/_1.4em/[rru]^(.6){!}    \ar[ru]^h
\\
& d_2(f)\ar[rr]^(.7){\kappa_f}|\hole && a \ar[r]^f & b
\\
d_2(0)\ar[rr]^{\kappa_0} \ar[ur]^(.55){d_3[g,f]} 
&& c\ar@/_1.4em/[rru]^{0} \ar[ur]_g
}
\] 
combined with~(\ref{Jarka opet upekla zavin, vezmu si ho na cestu.}).
This square is the value 
$d_3(\uu_c,\kappa_f,!)$ of $d_3$ on the morphism
\[
(\uu_c,\kappa_f,!)\colon [h,\ !\ ]\longrightarrow [g,f] 
\] 
in~$\NN_2(\C)$, represented by the prism on the right of that diagram.
We conclude that
\[
d_2(\kappa_{[f,\id]}) \circ h=d_3[g,f]\circ r=\Tilde{g}.\qedhere
\]
\end{proof}

\begin{theorem}
\label{Jen aby si stihla vyridit pas.}
A pointed category $\C$ has kernels if and only
if there is an isomorphism 
\[
\ttN_\bullet(\C) \cong\dec(X)_\bullet \, ,
\]
where $X$ is an almost strict supercoherent structure. 
\end{theorem}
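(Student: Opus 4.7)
The proof treats the two directions separately.

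\emph{Forward direction.} If $\C$ has kernels, Theorem~\ref{theorem:kernels} endows $\C$ with a normalized lax $\overline{\Ar}$-algebra structure, and then Corollary~\ref{corollary:Martinuv supervysledek} supplies an almost strict supercoherent structure $\widetilde{\ttN}_\bullet(\C)$ whose upper d\'ecalage is canonically $\ttN_\bullet(\C)$. Setting $X := \widetilde{\ttN}_\bullet(\C)$ produces the required isomorphism.

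\emph{Converse direction.} Suppose $\ttN_\bullet(\C) \cong \dec(X)_\bullet$ for some almost strict supercoherent structure $X$. The plan is to extract from $X$ the extra simplicial operators needed to invoke Proposition~\ref{proposition:kernels_as_functor_II}. The d\'ecalage isomorphism identifies $\ttN_n(\C)$ with $X_{n+1}$, and the top operators $d_{n+1}^X$, $s_{n+1}^X$ of $X$ --- forgotten in the d\'ecalage --- transfer back to extra operators on the extended nerve. In particular, $d_2^X\colon X_2 \to X_1$ and $d_3^X\colon X_3 \to X_2$ provide the required extra face maps
\[
d_2 \colon \ttN_1(\C) \longrightarrow \ttN_0(\C),
\qquad
d_3 \colon \ttN_2(\C) \longrightarrow \ttN_1(\C),
\]
the top degeneracies yield the extra $s_{n+1}\colon \ttN_n(\C) \to \ttN_{n+1}(\C)$, and $s_0^X \colon X_0 = \jednajedna \to X_1 \cong \ttN_0(\C)$ picks out an object of $\C$ that we identify with the distinguished zero object.

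Next I would verify the hypotheses of Proposition~\ref{proposition:kernels_as_functor_II}. Compatibility of the extracted $d_2, d_3$ with the remaining simplicial operators reduces, for each identity in~\eqref{equation:simplicial identities} that does not belong to the $\alpha_n$ family, to the corresponding strict simplicial identity in $X$. Key instances are $d_3 s_1 = s_1 d_2$ and $d_3 s_2 = \uu$, from which one computes $d_3[f,\uu_b] = [\,!_{d_2(f)}\,]$ and $d_3[f,!_b] = f$ for each $f \colon a \to b$; these match the geometry of the square $\kappa_{[g,f]}$ in~\eqref{Pozitri jedu zase do Bonnu.}.

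The delicate hypothesis is the normalization~\eqref{Mozna pojedu az v utery.}, $d_2(d_3(\nu_{s_1(f)})) = \uu_{d_2(f)}$. A direct simplicial calculation yields $d_2 d_2(\nu_{s_1(f)}) = d_2(\uu_{[f]}) = \uu_{d_2(f)}$, so the desired equation is equivalent to $d_2 d_3(\nu_{s_1(f)}) = d_2 d_2(\nu_{s_1(f)})$. In the almost strict setting this equality is mediated only by the $2$-cell $\alpha_3 \colon d_2 d_3 \Rightarrow d_2 d_2$; the main obstacle of the proof is to show that the coherence axioms bundled into Definition~\ref{definition:almost strict supercoherent} force the component of $\alpha_3$ at the degenerate morphism $\nu_{s_1(f)}$ to be the identity. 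Once this reduction is established, Proposition~\ref{proposition:kernels_as_functor_II} applies and equips $\C$ with the kernels $\kappa_f = d_2(\nu_f) \colon d_2(f) \to a$, completing the converse.
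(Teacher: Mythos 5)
Your forward direction coincides with the paper's: Theorem~\ref{theorem:kernels} followed by Corollary~\ref{corollary:Martinuv supervysledek}. For the converse you have the right plan --- transfer the top operators of $X$ through the d\'ecalage isomorphism and feed them to Proposition~\ref{proposition:kernels_as_functor_II} --- and you have correctly localized the delicate point in the normalization~\eqref{Mozna pojedu az v utery.}. But as written the argument has two genuine gaps. The first is that you never establish what the transferred degeneracies actually are. Proposition~\ref{proposition:kernels_as_functor_II} is stated for the nerve extended by the \emph{specific} operators $s_{n+1}$ given by post-composition with the terminal morphism, and its proof (the form of $\nu_T$, the square $\kappa_{[f,\uu_b]}$, the identities $d_2(!_a)=a$, etc.) uses that explicit description. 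A priori the operators $s^X_{n+1}$ coming from $X$ are just \emph{some} functors satisfying the simplicial identities; you assert that $s_0^X$ ``picks out an object \ldots that we identify with the distinguished zero object'' and silently use $s_2(f)=[f,!_b]$, but neither the terminality of $s_0^X(*)$ nor the formula for $s_{n+1}$ is free. The paper devotes Lemma~\ref{Vzal jsem si ze sklepa koste, lopatku ale nemam.} to exactly this, proving by induction from the simplicial identities that $U:=s_0(*)$ is terminal and that $s_{n+1}$ appends $!\colon A_n\to U$; only then can $U$ be chosen to be $0$ and the Proposition invoked.

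The second gap is that you stop at the key step rather than carrying it out: you reduce \eqref{Mozna pojedu az v utery.} to a statement about $\alpha_3$ and then write ``once this reduction is established.'' (Also note that $\alpha_3\colon d_2d_3\Rightarrow d_2d_2$ has components at \emph{objects}, not at the morphism $\nu_{s_1(f)}$; what you need is its components at the two objects $s_1(f)$ and $s_2(f)$.) The missing argument is short but it is the heart of the proof: by the Jardine coherence diagrams required in Definition~\ref{definition:almost strict supercoherent} (conditions (1.4.4) and (1.4.5) of \cite{Jardine}), the components $\alpha_{s_1(f)}$ and $\alpha_{s_2(f)}$ are identities; the naturality square of $\alpha_3$ at the morphism $\nu_{s_1(f)}\colon s_1(f)\to s_2(f)$ then reads $d_2d_2(\nu_{s_1(f)})\circ\alpha_{s_1(f)}=\alpha_{s_2(f)}\circ d_2d_3(\nu_{s_1(f)})$, and since $d_2d_2(\nu_{s_1(f)})=\uu_{d_2(f)}$ (which you did compute), this forces $d_2d_3(\nu_{s_1(f)})=\uu_{d_2(f)}$. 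Without naming which coherence axioms are used and how naturality closes the square, the ``main obstacle'' you identify remains open, and with it the converse direction.
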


Before proving the theorem we formulate a necessary  auxiliary material.
The situation when $\ttN_\bullet(\C) \cong\dec_\bullet(X)$ amounts to
the scheme
\begin{equation}\label{NC=decX scheme}
\xymatrix@C=4.86em@R=7em{X_0 \ar@{->}[r]|-{s_0} \ar@/^1em/@{<-}[r]|-{d_1}   
 \ar@/_1em/@{<-}[r]|-{d_0}   
&X_1 \ar@{<->}[d]^\cong 
\ar@{<-}[r]|-{d_1} \ar@/^1em/[r]|-{s_1}  \ar@/_1em/[r]|-{s_0} 
\ar@/^2em/@{<-}[r]|-{d_2}    \ar@/_2em/@{<-}[r]|-{d_0}    
& X_2 \ar@{<->}[d]^\cong 
\ar[r]|-{s_1} \ar@/^1em/@{<-}[r]|-{d_2}   
 \ar@/_1em/@{<-}[r]|-{d_1}  \ar@/^2em/[r]|-{s_2}  
\ar@/_2em/[r]|-{s_0}  \ar@/^3em/@{<-}[r]|-{d_3}
 \ar@/_3em/@{<-}[r]|-{d_0}   
&X_3  \ar@{<->}[d]^\cong 
\ar@{<-}[r]|-{d_2} \ar@/^1em/[r]|-{s_2}  \ar@/_1em/[r]|-{s_1} 
\ar@/^2em/@{<-}[r]|-{d_3}    \ar@/_2em/@{<-}[r]|-{d_1}
\ar@/^3em/[r]|-{s_3}  \ar@/_3em/[r]|-{s_0} 
\ar@/^4em/@{<-}[r]|-{d_4} \ar@/_4em/@{<-}[r]|-{d_0}
&X_4 \ar@{<->}[d]^\cong 
\ar@{}[r]|{\hbox {\Large $\ldots$}} 
&
\\
&
\ttN_0(\C) \ar@{->}[r]|-{s_0} \ar@/^1em/@{<-}[r]|-{d_1}   
 \ar@/_1em/@{<-}[r]|-{d_0}   
&\ttN_1(\C)
\ar@{<-}[r]|-{d_1} \ar@/^1em/[r]|-{s_1}  \ar@/_1em/[r]|-{s_0} 
\ar@/^2em/@{<-}[r]|-{d_2}    \ar@/_2em/@{<-}[r]|-{d_0}    
&\ttN_2(\C) 
\ar[r]|-{s_1} \ar@/^1em/@{<-}[r]|-{d_2}   
 \ar@/_1em/@{<-}[r]|-{d_1}  \ar@/^2em/[r]|-{s_2}  
\ar@/_2em/[r]|-{s_0}  \ar@/^3em/@{<-}[r]|-{d_3}
 \ar@/_3em/@{<-}[r]|-{d_0}   
&\ttN_3(\C) \ar@{}[r]|{\hbox {\Large $\ldots$}} 
&
}
\end{equation}
The vertical isomorphisms equip the nerve of $\C$ with additional operators
$$d_{n+1} \colon \ttN_n(\C) \to \ttN_{n-1}(\C),\ n \geq 1,$$
$$s_{n+1}\colon
\ttN_n(\C) \to \ttN_{n+1}(\C), n \geq 0,$$ which are fully compatible
with the standard simplicial operators of the nerve. We will need
the following   

\begin{lemma}
\label{Vzal jsem si ze sklepa koste, lopatku ale nemam.}
Let $X_\bullet$ be an almost strict supercoherent structure. 
Suppose moreover that \/ $\C$ is a category such that
\/ $\ttN_\bullet(\C) \cong \dec_\bullet(X)$. Then, for each $n \geq 0$,
\begin{equation}
\label{Opet v Bonnu.}
s_{n+1}(A_0 \stackrel{f_1} \longrightarrow A_1  \longrightarrow \cdots
\longrightarrow
A_{n-1}
\stackrel{f_n} \longrightarrow A_n)
=
A_0 \stackrel{f_1} \longrightarrow A_1  \longrightarrow \cdots
\longrightarrow
A_{n-1} \stackrel{f_n} \longrightarrow A_n \stackrel!\longrightarrow U,
\end{equation}        
where $U$ is a fixed terminal object of \/ $\C$.
\end{lemma}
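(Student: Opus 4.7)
The plan is to exploit the simplicial identities inherited from $X$ by $\ttN_\bullet(\C)$ via the isomorphism $\ttN_\bullet(\C) \cong \dec_\bullet(X)$. Under this identification $\ttN_n(\C) \cong X_{n+1}$, the standard operators $d_i,s_j$ on the nerve (with $0\le i,j\le n$) correspond to $d_i^X,s_j^X$ on $X_{n+1}$, and the forgotten top operators $d_{n+1}^X,s_{n+1}^X$ of $X$ yield additional operators $d_{n+1},s_{n+1}$ on $\ttN_\bullet(\C)$ satisfying all the simplicial identities of $X$ (strict except for the supercoherence $\alpha_n$). Since $X_0=\jednajedna$ by definition of an almost strict supercoherent structure, the degeneracy $s_0^X\colon X_0\to X_1$ picks out a fixed object $U$ of $X_1\cong \ttN_0(\C)=\C$, while $d_0^X,d_1^X\colon X_1\to X_0$ coincide with the unique terminal functor.

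I first handle the case $n=0$ and simultaneously prove that $U$ is terminal in $\C$. The identity $d_0 s_1=s_0 d_0$ at level $X_1$ translates to saying that the composite of the extra degeneracy $s_1\colon\ttN_0(\C)\to\ttN_1(\C)$ with the standard face $d_0\colon\ttN_1(\C)\to\ttN_0(\C)$ is the constant functor with value $U$. Hence $s_1(A)$ is a morphism $f_A\colon A\to U$ for every $A\in\C$, and for $g\colon A\to B$ the naturality square $s_1(g)$ has $\id_U$ on its right edge, giving $f_B\circ g=f_A$. Next, the identities $d_2 s_2=\id$ and $d_0 s_2=s_1 d_0$ force $s_2(f)=[f,f_B]$ for every $f\colon A\to B$. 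Finally, the simplicial identity $s_1 s_1=s_2 s_1\colon\ttN_0(\C)\to\ttN_2(\C)$ (coming from $s_1^X s_1^X=s_2^X s_1^X$ on $X_1$) applied to $A$ gives $[f_A,\id_U]=[f_A,f_U]$, whence $f_U=\id_U$. For arbitrary $h\colon C\to U$, naturality then yields $h=f_U\circ h=f_C$, so $f_C$ is the unique morphism $C\to U$ and $U$ is indeed a terminal object of $\C$.

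For $n\ge 1$ I argue by induction. Given $T=[f_1,\ldots,f_n]$ on the chain $A_0\to\cdots\to A_n$, write $s_{n+1}(T)=[g_1,\ldots,g_{n+1}]$. The identity $d_{n+1}s_{n+1}=\id$ forces $g_i=f_i$ for $1\le i\le n$, so the first $n$ arrows of $s_{n+1}(T)$ coincide with those of $T$. The identity $d_n s_{n+1}=s_n d_n$ (from $d_i s_j=s_{j-1}d_i$ with $i=n$, $j=n+1$, where the right-hand $s_n$ is the extra degeneracy on $\ttN_{n-1}(\C)$), combined with the inductive hypothesis applied to $d_n(T)=[f_1,\ldots,f_{n-1}]$, forces $g_{n+1}\circ f_n$ to equal the unique morphism $A_{n-1}\to U$. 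In particular the target of $g_{n+1}$ is $U$, and by terminality of $U$ we must have $g_{n+1}$ equal to the unique morphism $A_n\to U$, so $s_{n+1}(T)=[f_1,\ldots,f_n,{!}]$ as required.

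The principal obstacle is extracting terminality of $U$ in the base case: naturality of $s_1$ alone yields only the idempotency $f_U\circ f_A=f_A$, which does not by itself force $f_U=\id_U$. The missing ingredient is the compatibility between the extra degeneracies at levels $0$ and $1$, encoded by $s_1 s_1=s_2 s_1$; once $s_2$ has been described explicitly, this identity pins down $f_U=\id_U$ at the single object $U$, and terminality then propagates to all of $\C$ by naturality alone.
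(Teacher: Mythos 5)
Your proof is correct and follows essentially the same route as the paper's: identify $s_1(A)=A\xrightarrow{f_A}U$ via $d_0s_1=s_0d_0$ and $d_1s_1=\id$, pin down $s_2$, show $f_U=\id_U$ and hence that $U$ is terminal, then induct. The only (harmless) variations are that you extract $f_U=\id_U$ from $s_1s_1=s_2s_1$ rather than from $s_1s_0=s_0s_0$ applied to $s_0(*)$, you get the cocone relation $f_B\circ g=f_A$ from functoriality of $s_1$ rather than from $d_1s_2=s_1d_1$, and in the induction you use $d_ns_{n+1}=s_nd_n$ where the paper uses $d_0s_{n+1}=s_nd_0$.
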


\begin{proof}
Let us denote $U := s_0(*)$, where $*$ is the unique object of the terminal
category $X_0$. For $A \in \C = \ttN_0(\C)$, $s_1(A) \in \ttN_1(\C)$
is of the form $R \xrightarrow {s_A} T \in \ttN_1(\C)$. Using
$d_0s_1=s_0d_0$ we obtain
\[
T=d_0(s_1(A)) = s_0(d_0(A)) = s_0(*) = U.
\]
Since $d_1s_1 = \id$, we likewise obtain 
\[
A = d_1(s_1(A)) = d_1(R \xrightarrow {s_A} T) = R,
\]
so, for each $A \in \C$, 
\begin{equation}
\label{Po dlouhe dobe poradna zima a ja jsem v Bonnu jako pitomec.}
s_1(A) = A  \stackrel {s_A} \longrightarrow U
\end{equation}
for some map $s_A$ determined by $A$.
We moreover obtain
\[
s_1(U) = s_1(s_0(*)) = s_0(s_0(*)) = s_0(U) = U \stackrel {\id_U}
\longrightarrow U,
\]
so $s_U = \id_U$ -- the normalization.
Let us move on. Since $d_2s_2 = \id$, we see that $s_2(A \xrightarrow {f}
B)$ is of the form $A \stackrel {f}\to B  \to Y$ for an
unspecified morphism $B \to Y$.
The equality $d_0s_2=s_1d_0$ combined with~(\ref{Po dlouhe dobe
  poradna zima a ja jsem v Bonnu jako pitomec.}) however tell us that
this morphism equals
\[
d_0\big(s_2(A \stackrel {f}\longrightarrow
B)\big) = s_1\big(d_0(A \Xrightarrow {f}
B)\big) = s_1(B) = B  \stackrel {s_B} \longrightarrow  U,
\]
so $s_2(A \xrightarrow f B) = A \xrightarrow f B \xrightarrow {s_B} U$.
The identity $ d_1s_2 =  s_1d_1$ gives
\[
s_B \circ f = d_1\big(s_2(A \Xrightarrow f B)\big) = s_1\big(d_1 (A \Xrightarrow f B)\big)
= s_A. 
\]
In the particular case when $f : A \to U$ the above equations give
$s_U \circ f = s_A$ which, since $s_U = \id_U$, says that $f =
s_A$. This means that every morphism $f\colon A \to U$ equals $s_A$, so $U$
is terminal and $s_A$ is the unique morphism $!_A$ to the terminal
object $U$. 

The above computation established~(\ref{Opet v Bonnu.}) for $n =
0,1$. Let us continue by induction, assuming that~(\ref{Opet v
  Bonnu.}) was proven for $(n-1)$, with $n \geq 2$. 
The right hand side of~(\ref{Opet v Bonnu.}) is certainly of the form
\[
B_0 \stackrel{g_1} \longrightarrow B_1  \longrightarrow \cdots
\longrightarrow
B_{n-1} \stackrel{g_n} \longrightarrow B_n \longrightarrow T.
\]
Since $d_{n+1}s_{n+1} = \id$,   
\begin{align*}
B_0 \stackrel{g_1} \longrightarrow B_1  \longrightarrow \cdots
\longrightarrow
B_{n-1} \stackrel{g_n} \longrightarrow B_n&  = 
d_{n+1}s_{n+1}(A_0 \stackrel{f_1} \longrightarrow A_1  \longrightarrow \cdots
\longrightarrow
A_{n-1}
\stackrel{f_n} \longrightarrow A_n) 
\\
&=
A_0 \stackrel{f_1} \longrightarrow A_1  \longrightarrow \cdots
\longrightarrow
A_{n-1}
\stackrel{f_n} \longrightarrow A_n.
\end{align*}
Combining the induction assumption with 
$d_0s_{n+1} = s_nd_0$ we obtain that
\begin{align*}
 B_1  \Xrightarrow {g_2} B_2  \longrightarrow \cdots
\longrightarrow
B_{n-1} \stackrel{g_n} \longrightarrow B_n \longrightarrow T 
&=
s_{n}(A_1  \Xrightarrow {f_2} A_2 \cdots
\longrightarrow
A_{n-1}
\stackrel{f_n} \longrightarrow A_n) 
\\
&=
A_1  \Xrightarrow {f_2} A_2 \cdots
\longrightarrow
A_{n-1}
\stackrel{f_n} \longrightarrow A_n \Xrightarrow ! U.
\end{align*}
The above two displays together give~(\ref{Opet v Bonnu.}) for $n$,
proving the induction step.
\end{proof}

\begin{proof}[Proof of Theorem~\ref{Jen aby si stihla vyridit pas.}]
Let $\ttN_\bullet(\C) \cong\dec_\bullet(X)$ and choose as the
operators $d_2$ and $d_3$ in~(\ref{Greta mne spolehlive dovezla.}) the
operators induced by the isomorphism \eqref{NC=decX scheme}. We will show that the condition
in~\eqref{Mozna pojedu az v utery.} is satisfied for this
choice. Since $\C$ is pointed, the terminal object $U$ in~\eqref{Opet
  v Bonnu.} is also initial, so we may chose it to be the zero object
$0 \in \C$. The morphism $\nu_{s_1(f)} : s_1(f) \to s_2(f)$ 
is given by the diagram
\[
\xymatrix{a  \ar[r]^f \ar[d]^{\id_a} & b \ar[r]^{\id_b} \ar[d]^{\id_b} 
& b  \ar[d]^{!}
\\
a \ar[r]^f & b \ar[r]^{!}& 0.
}
\]
The bottom row indeed equals $s_2(f)$ by
Lemma~\ref{Vzal jsem si ze sklepa koste, lopatku ale nemam.}.
The coherence $\psi: d_2d_3 \Rightarrow d_2d_2$  of
$X_\bullet$ par\-ti\-cipates in the diagram
\[
\xymatrix@C=3em{
d_2(d_3(s_1(f)))  \ar[r]^{\psi_{s_1(f)}}\ar[d]_{d_2(d_3(\nu_{s_1(f)}))} 
& d_2(d_2(s_1(f))) \ar[d]^{d_2(d_2(\nu_{s_1(f)}))} \ar@{=}[r]  & d_2(f) \ar[d]^{\id}
\\
d_2(d_3(s_2(f)))  \ar[r]^{\psi_{s_2(f)}}  & d_2(d_2(s_2(f))) \ar@{=}[r]  & d_2(f) \tecka
}
\]
We recall that $\psi_{s_1(f)} = \id$ \/
by~\cite[(1.4.5)]{Jardine} and  $\psi_{s_2(f)} = \id$ \/
by~\cite[(1.4.4)]{Jardine}. The commutativity of the right square
follows from the clear fact that $d_2(\nu_{s_1(f)}) = \id_f$. 
We conclude that $$d_2(d_3(\nu_{s_1(f)})) = \id_{d_2(f)},$$
which is~\eqref{Mozna pojedu az v utery.}. The existence of
kernels thus follows from Proposition~\ref{proposition:kernels_as_functor_II}.

The opposite implication is simple. If $\C$ has kernels, 
it caries a normalized lax $\overline
\Ar$-algebra structure by Theorem~\ref{theorem:kernels}, so it is the
d\`ecalage of an almost strict supercoherent structure by
Corollary~\ref{corollary:Martinuv supervysledek}. This finishes the proof.
\end{proof}

In the rest of this section we show that the isomorphism
$\ttN_\bullet(\C) \cong \dec_\bullet (X)$ imposes strong
conditions on the coherence constraint $\psi$ of $X_\bullet$.

\begin{proposition}
If \/  $\ttN_\bullet(\C) \cong \dec_\bullet (X)$, then the coherence constraints $\psi:
d_{n-1} d_{n} \Rightarrow d_{n-1} d_{n-1}$ are uniquely determined by
the top boundary operators $d_n: X_n \to X_{n-1}$, $n \geq  1$.
\end{proposition}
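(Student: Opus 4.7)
The plan is to reduce the uniqueness of $\psi$ to the uniqueness of the lax-algebra coherence $\phi$ established in Proposition~\ref{Ke sboru pristoupim az v Bilovicich.}. The isomorphism $\ttN_\bullet(\C) \cong \dec_\bullet(X)$ together with the top face operators $d_n$ of $X$ equip the nerve of $\C$ with the extra simplicial data depicted in scheme~\eqref{NC=decX scheme}. By the argument in the proof of Theorem~\ref{Jen aby si stihla vyridit pas.}, $\C$ then carries algebraic kernels with $\ker f = d_2(f)$ and $\kappa_f = d_2(\nu_f)$, and by Theorem~\ref{theorem:kernels} a normalized lax $\oA$-algebra structure $(\C, K, \phi)$ in which the action $K$ coincides with the top operator $d_2$. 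Proposition~\ref{Ke sboru pristoupim az v Bilovicich.} then guarantees that $\phi$ is uniquely pinned down by~$K$, and hence by the top operator $d_2$ alone.

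Having fixed $\phi$, Corollary~\ref{corollary:Martinuv supervysledek} produces from $(\C, K, \phi)$ an almost strict supercoherent structure ${\widetilde \ttN}_\bullet(\C)$ with coherences
\[
\alpha_n = R_{n-3}\bigl(\Ar^{n-3}\phi\bigr)\S_{n-1},
\]
which are therefore determined by $\phi$, and consequently by the top $d_n$'s. To finish, I would identify $\psi_n^X$ with $\alpha_n$: both are natural transformations between the same parallel pair of functors (built from the common top and non-top operators of $X$), and both must satisfy Jardine's coherence diagrams on pp.~115--117. The base case $n=2$ is trivial because both source and target of $\psi_2$ land in the terminal category $X_0 = \jednajedna$. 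The inductive step uses the compatibility of $\psi_n$ with the non-top face operators, combined with the normalizations $\psi_{s_j}=\uu$ on degenerate simplices already exploited in the proof of Theorem~\ref{Jen aby si stihla vyridit pas.}, to propagate the match from level $n-1$ to level $n$.

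The main technical obstacle is this final identification $\psi^X = \alpha$. Conceptually it is the simplicial counterpart of the colax-idempotency of $\oA$ on pointed categories (cf.\ the remark after Proposition~\ref{Je streda a uz melu z posledniho.}): any lax structure on $\C$ extending a given action $K$ is uniquely determined, and this rigidity must be propagated through the spindle embedding $\S_n$ to every level~$X_n$. Carrying this out requires a careful audit of Jardine's diagrams to confirm that a $2$-cell $d_{n-1}d_n \Rightarrow d_{n-1}d_{n-1}$ compatible with all the simplicial-type constraints of an almost strict supercoherent structure is rigid once it agrees with $\phi$ at the bottom level $n=3$, where the comparison to the lax-algebra coherence can be made directly from the explicit description of $\alpha_3 = R_0\,\phi\,\S_2$.
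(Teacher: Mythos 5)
Your proposal has a genuine gap, and it is the one you yourself flag: the identification $\psi^X=\alpha$. The chain ``isomorphism $\Rightarrow$ kernels $\Rightarrow$ lax $\oA$-algebra with $K=d_2$ $\Rightarrow$ $\phi$ unique $\Rightarrow$ $\alpha_n$ determined'' only produces \emph{one particular} coherence on the \emph{particular} supercoherent structure $\widetilde\ttN_\bullet(\C)$ built in Corollary~\ref{corollary:Martinuv supervysledek}. The proposition, however, asserts that the coherence $\psi$ of the \emph{given, arbitrary} $X$ is forced by its face operators; showing that this given $\psi$ must coincide with your constructed $\alpha$ is precisely the uniqueness statement, so deferring it to ``a careful audit of Jardine's diagrams'' leaves the proof circular at its crucial point. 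The inductive step you sketch does not obviously close: Jardine's coherence diagrams relate $\psi$ at neighbouring levels only through whiskerings by face and degeneracy functors, and there is no reason such whiskered equations determine $\psi_n$ from $\psi_{n-1}$ without some injectivity that you have not established.

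The paper's argument is much more direct and bypasses the lax-algebra machinery entirely. Assume $X_n=\ttN_{n-1}(\C)$ and take the canonical morphism $\nu_T\colon T\to s_{n-1}d_{n-1}(T)$ from~\eqref{Na Zimni skole spousta snehu a ja tam nebudu.} (identity on all legs but the last, which is collapsed to the terminal map; Lemma~\ref{Vzal jsem si ze sklepa koste, lopatku ale nemam.} identifies the target as a degenerate simplex). Writing out the naturality square of $\psi$ at $\nu_T$, the bottom edge $\psi_{s_{n-1}d_{n-1}(T)}$ is the identity by Jardine's normalization, and the right edge $d_{n-1}d_{n-1}(\nu_T)$ is the identity because $d_{n-1}(\nu_T)=\id$. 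The square therefore collapses to the explicit formula
\[
\psi_T \;=\; d_{n-1}\bigl(d_n(\nu_T)\bigr),
\]
which visibly depends only on the boundary operators. This is the step your proposal is missing; with it in hand, no comparison with $\alpha$ or induction on $n$ is needed.
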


\begin{proof}
 Replacing
$X_\bullet$ by an isomorphic structure if necessary, 
we will assume that actually 
\begin{equation*}
\label{Lyzuje se dokonce i ve Stromovce a ja trcim v Bonnu!!!}
\ttN_\bullet(\C) = \dec_\bullet (X),
\end{equation*}
i.e.\ that $X_n = \ttN_{n-1}(\C)$, $n \geq 1$.
The central r\^ole in the proof will be played by the transformation
$\nu$ introduced in~\eqref{Na Zimni skole spousta snehu a ja tam
  nebudu.}. We claim that 
\begin{equation}
\label{Mam svitici domecek.}
\psi = d_{n-1}(d_n (\nu)), \ n \geq 2.
\end{equation}
Notice first that,
for $T \in \ttN_{n+1}(\C)$, the bottom row of~\eqref{Na Zimni skole
  spousta snehu a ja tam 
  nebudu.} equals $s_{n+1}(d_{n+1}(T))$ by Lemma~\ref{Vzal jsem si
  ze sklepa koste, lopatku ale nemam.}. As a worm-up, we
analyze in detail~(\ref{Mam svitici domecek.}) in the first nontrivial case
$n=3$. For $T = [f,g] \in \ttN_2(\C)$, $\nu_T$~is the
morphism
\[
\raisebox{-1.7em}{$\nu_T=$} 
\hskip 1em
\xymatrix{
\bullet \ar[r]^{f} \ar[d]^{\id}  & \ar[d]^{\id} \bullet \ar[r]^{g}  
&\ar[d]^{!}  \bullet
\\
\bullet\ar[r]^{f} & \bullet\ar[r]^{!} & 0
}
\]
and the naturality of the transformation $\psi$ implies the
commutativity of
\[
\xymatrix@C=4em{
d_2d_3[f,g] \ar[r]^(.45){\psi_{[f,g]}}  \ar[d]_{d_2d_3(\nu_{[f,g]})}
&    d_2d_2[f,g] = d_2(f)  \ar[d]^{d_2d_2(\nu_{[f,g]})}
\\
d_2d_3s_2d_2[f,g] = d_2(f) 
\ar[r]^{\psi_{s_2 d_2[f,g]}}
&d_2d_2s_2 d_2[f,g] = d_2(f).
}
\]
In this diagram, $\psi_{s_2 d_2[f,g]} = \psi_{s_2(f)}$ is the identity
by~\cite[(1.4.4)]{Jardine}; the equality $d_2d_2(\nu_{[f,g]}) =
\id_{d_2(f)}$ has already been established. The diagram therefore
simplifies to
\[
\xymatrix@C=4em{
d_2(d_3[f,g]) \ar[r]^(.55){\psi_{[f,g]}}  \ar[d]_{d_2d_3(\nu_{[f,g]})}
&  d_2(f)  \ar[d]^{\id}
\\
d_2(f) 
\ar[r]^{\id}
&d_2(f)
}
\]
giving~(\ref{Mam svitici domecek.}) for $n=3$. For a general $T \in
\ttN_{n-1}(\C) = X_n$, $n \geq 3$, we likewise obtain the commutative diagram
\[
\xymatrix{
d_{n-1}d_n(T) \ar[r]^{\psi_T} \ar[d]_{d_{n-1}d_n(\nu_T)} 
& d_{n-1}d_{n-1}(T) \ar[d]^\id
\\                                            
d_{n-1}d_{n-1}(T) \ar[r]^{\id}&  d_{n-1}d_{n-1}(T)\ .
}
\]
We used that 
$\psi_{s_{n-1} d_{n-1}(T)}$ is the identity again, and since $d_{n-1}(\nu_{T})
= \id_{d_{n-1}(T)}$, $d_{n-1}d_{n-1}(\nu_{T})$ is the identity too.
\end{proof}

\begin{proposition}
Assume that $X_\bullet$ is a collection of categories as in
Definition~\ref{definition:almost strict supercoherent}, with the only
difference that we
assume no a priory relations between the functors
\[
d_{n-1}d_n\colon X_n \to X_{n-2} \ \hbox { and }\ d_{n-1}d_{n-1} :X_n \to
X_{n-2}, \ n \geq 2.
\]
Assume moreover that $\dec(X)_\bullet \cong \ttN_\bullet(\C)$ for some
category $\C$. 
Then  $X_\bullet$ is in fact an almost strict
supercoherent structure, i.e.\ a transformations in~\eqref{Vcera
  jsem opet pichl galusku - ty kapitalisticke nic nevydrzi} exist, 
if and only if~\eqref{Mozna pojedu az v
  utery.} holds. If this is the case, then the said
transformations are unique. 
\end{proposition}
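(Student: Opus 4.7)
The plan is to combine the previous proposition (which gives both uniqueness and the explicit formula $\alpha_n = d_{n-1}(d_n(\nu))$) with Proposition~\ref{proposition:kernels_as_functor_II} and Corollary~\ref{corollary:Martinuv supervysledek}. The uniqueness claim is immediate: if any coherences satisfying Definition~\ref{definition:almost strict supercoherent} exist, the previous proposition forces them to equal $d_{n-1}(d_n(\nu))$. So only the equivalence itself demands genuine work.

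The forward direction is short. Assume $X_\bullet$ is already an almost strict supercoherent structure. Then by the previous proposition, applied at $n=3$ and $T = s_1(f)\in X_3$ for a morphism $f$ of $\C$ (viewed as an object of $X_2\cong\ttN_1(\C)$), the coherence evaluates as $\alpha_{3,s_1(f)} = d_2(d_3(\nu_{s_1(f)}))$. Jardine's degeneracy coherence \cite[(1.4.5)]{Jardine}---the very identity already exploited in the previous proof---forces $\alpha_3$ restricted to an $s_1$-degenerate object to be the identity. Hence $d_2(d_3(\nu_{s_1(f)})) = \uu_{d_2(f)}$, which is exactly~\eqref{Mozna pojedu az v utery.}.

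For the backward direction I would first pull kernels out of $X$. The iso $\dec(X)_\bullet\cong\ttN_\bullet(\C)$ transports the top face and top degeneracy of $X$ to operators $d_{n+1}\colon\ttN_n(\C)\to\ttN_{n-1}(\C)$ and $s_{n+1}\colon\ttN_n(\C)\to\ttN_{n+1}(\C)$ compatible with the standard simplicial operators of the nerve. Proposition~\ref{proposition:kernels_as_functor_II} applied to the induced $d_2,d_3$ together with the hypothesis~\eqref{Mozna pojedu az v utery.} equips $\C$ with kernels $\kappa_f = d_2(\nu_f)$. Theorem~\ref{theorem:kernels} then promotes these kernels to a normalized lax $\overline\Ar$-algebra structure on $\C$, and Corollary~\ref{corollary:Martinuv supervysledek} produces an almost strict supercoherent structure $\widetilde\ttN_\bullet(\C)$ with $\dec(\widetilde\ttN_\bullet(\C)) = \ttN_\bullet(\C)$. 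In particular $\dec(X)_\bullet\cong\dec(\widetilde\ttN_\bullet(\C))$.

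It remains to lift this iso of d\'ecalages to an iso of the full pre-supercoherent structures $X_\bullet\cong\widetilde\ttN_\bullet(\C)$ and to transport the coherences. Both extra top faces encode the same kernel structure on $\C$: $d_n^X$ by Proposition~\ref{proposition:kernels_as_functor_II} and its higher iterates, and $d_n^{\widetilde\ttN}$ by the spindle formula of Corollary~\ref{corollary:Martinuv supervysledek}. The axiom of choice lets me align the chosen kernels so that the two top faces coincide on the nose; the top degeneracies then match via Lemma~\ref{Vzal jsem si ze sklepa koste, lopatku ale nemam.}, and the transported $\alpha_n$ automatically satisfy Jardine's diagrams because $\widetilde\ttN_\bullet(\C)$ does. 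The main obstacle---and the only step beyond routine---is precisely this lift at each dimension: one must verify that the given top face of $X$ and the spindle-built top face of $\widetilde\ttN(\C)$ agree strictly, not merely up to canonical isomorphism. A self-contained alternative would be to take $\alpha_{n,T} := d_{n-1}(d_n(\nu_T))$ as the definition and to verify Jardine's diagrams on $X_\bullet$ directly, deriving every higher coherence by naturality-and-simplicial-identity chases using only the structure already assumed on $X_\bullet$ and~\eqref{Mozna pojedu az v utery.}.
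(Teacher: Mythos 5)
Your uniqueness claim and your forward direction are fine: both follow from the preceding proposition's formula $\psi = d_{n-1}(d_n(\nu))$ together with Jardine's degeneracy coherences, exactly as in the proof of Theorem~\ref{Jen aby si stihla vyridit pas.}. The problem is the backward direction, and it is the gap you yourself flag. An isomorphism $\dec(X)_\bullet \cong \ttN_\bullet(\C)$ constrains only the operators that survive d\'ecalage; the top faces $d_{n+1}$ and top degeneracies $s_{n+1}$ of $X_\bullet$ are extra data about which the d\'ecalage isomorphism says nothing. So after you run Proposition~\ref{proposition:kernels_as_functor_II}, Theorem~\ref{theorem:kernels} and Corollary~\ref{corollary:Martinuv supervysledek} to manufacture $\widetilde\ttN_\bullet(\C)$, the statement ``the axiom of choice lets me align the chosen kernels so that the two top faces coincide on the nose'' is not a proof: choosing $\ker f := d_2(f)$ handles $d_2$ on objects, and the universal property (using that $\kappa_f$ is monic) handles $d_2$ and $d_3$ on morphisms, but for the top operators in every higher degree you would need an induction showing they are forced by the lower-degree data and the simplicial identities (in the spirit of Lemma~\ref{Vzal jsem si ze sklepa koste, lopatku ale nemam.}). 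Without that induction the conclusion transports coherences onto a structure that has not been shown to be $X_\bullet$, whereas the proposition asserts that $X_\bullet$ itself, with its given operators, is almost strict supercoherent.

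The paper avoids this entirely by taking what you relegate to a closing sentence as the actual proof: identify $X_n$ with $\ttN_{n-1}(\C)$, \emph{define} $\psi := d_{n-1}(d_n(\nu))$ directly on $X_\bullet$ (this type-checks and is natural with no hypotheses beyond the simplicial identities you are given), and then verify Jardine's diagrams using~\eqref{Mozna pojedu az v utery.}; the paper, too, leaves that verification to the reader. So your ``self-contained alternative'' is the intended argument, but as written you neither carry it out nor close the identification gap in your primary route; one of the two has to be done for the proof to stand.
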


\begin{proof}
Formula~\eqref{Mam svitici domecek.} defines natural
transformations $\psi\colon d_{n-1} d_{n} \Rightarrow d_{n-1}
d_{n-1}$, $n \geq 2$, and condition~\eqref{Mozna pojedu az v utery.}
guarantees Jardine's coherence. We leave the details to
the reader.  
\end{proof}

\section{Towards weak operadic categories}
\label{Jarka pojede do Kolumbie, ale Nove Eldorado je legenda.}

Operadic categories, introduced in~\cite{duo}, 
were invented as a natural framework for 
various very general operad-like structures. An operadic category
$\O$ is, by definition, 
equipped with the {\/\em cardinality functor\/}
$|\hbox{-}| : \O \to \Fin$ to the skeletal category of finite sets. 
The operadic category structure
assigns to each 
morphism $f: X \to Y$ in $\O$ an array of $n$ objects of $\O$, 
called the {\em fibers} of $f$, where  $n
\geq 0$ is the cardinality of $Y$. 
We will express the fact that such an $f$ has fibers $F_1,\ldots,F_n$
by writing
\begin{subequations}
\begin{equation}
\label{Uz psano v Praze.}
F_1,\ldots,F_n \ \Fib \ f : X \longrightarrow Y. 
\end{equation} 

We are not going to repeat all the axioms of general operadic categories,
they are easily available not only in the original paper~\cite{duo}, but
also in several follow-up works~\cite{blob,env,lack,GarnerKockWeber,trnka}. 
We also  restrict the following discussion to {\/\em unary\/} operadic
categories, that is, operadic 
categories in which each object has cardinality one. The situation of
general operadic categories is conceptually similar, but involves also
the uninteresting bookkeeping  of multiple fibers. In the unary case, 
(\ref{Uz psano v Praze.}) reduces to much simpler 
\begin{equation}
\label{Bude strasna zima.}
F \  \Fib \ f:  X \longrightarrow Y.
\end{equation}
\end{subequations}
Let us mention that even the simple-minded unary version of operadic
categories covers interesting structures such as decomposition spaces,
$2$-Segal spaces, \&c.

Item~(iii) of the axioms in~\cite[page~1634]{duo} requires that
the fiber assignment $f \mapsto F$ in~(\ref{Bude strasna zima.}) 
be functorial, meaning that it assembles to 
the {\/\em fiber functor\/}
\[
\F : \Dec(\O) \longrightarrow \O,
\]
from the d\' ecalage~\eqref{Dnes jsme byli s Jarkou na vystave betlemu.}
of the category $\O$. Item (iv) of the operadic axioms
then requires the identity
$X = Y$ of the fibers in the diagram 
\begin{equation}
\label{Pozitri pojedeme na chalupu.}
\xymatrix@C=1.1em@R=1em{X \ar@{}[r]|{\hbox{\large $\triangleright$}}  
\ar@{=}[d] 
&\ar[rr]^{\F(S)} \ar@{}[d]|\spicka F  &&G \ar@{}[d]|\spicka  
\\
Y \ar@{}[r]|{\hbox{\large $\triangleright$}} &A  \ar[rr]^f  
\ar@/_/[ddr]_{gf}  &\ar@{}[dd]|{\raisebox{.3em}{\hbox{$S$}}}&B \ar@/^/[ddl]^g
\\
\\
&&C,&
}
\end{equation}
in which $\F(S)$ is the value of the fiber functor on the map $S : gf
\to f$ in $\D(\O)$.
This requirement is perfectly reasonable and justified 
in the applications
for which operadic categories were invented. However, there are situations 
in which one wishes to replace
the equality $X=Y$ by the existence of a {\em natural} isomorphism 
$\phi_S: X \to Y$ so that~(\ref{Pozitri pojedeme na chalupu.}) 
is replaced by 
\begin{equation}
\label{Pozitri pojedeme na chalupu.2}
\xymatrix@C=1.1em@R=1em{X \ar@{}[r]|{\hbox{\large $\triangleright$}}  
\ar[d]_{\phi_S} 
&\ar[rr]^{\F(S)} \ar@{}[d]|\spicka F  &&G \ar@{}[d]|\spicka  
\\
Y \ar@{}[r]|{\hbox{\large $\triangleright$}} &A  \ar[rr]^f  
\ar@/_/[ddr]_{gf}  &\ar@{}[dd]|{\raisebox{.3em}{\hbox{$S$}}}&B \ar@/^/[ddl]^g
\\
\\
&&C.&
}
\end{equation}
This situation arises, for example, when $\O$ is an abelian category with 
fibers given by kernels, cf.~\cite[Example~1.22]{duo}. 
Then $\phi_S$ uniquely determined by the
universal property of the kernels, in the same way as in
the second part of the proof of Theorem~\ref{theorem:kernels}. 
Also, an isomorphism is more pleasing to a categorist’s eye than
equality. The problem, of
course, is to say precisely what ``naturality of $\phi_S$'' means.

As noticed in~\cite{GarnerKockWeber}, the d\'ecalage functor $\Dec$ 
is a comonad in $\Cat$, with the
commultiplication $\delta: \Dec(\C) \to \Dec^2(\C)$ given by
\[
\raisebox{-2em}{$\delta(f:A\to B) := $} \hskip 1.5em
\xymatrix@C=1.1em@R=1em{
A  \ar[rr]^f  
\ar@/_/[ddr]_{f}  &&B \ar\ar@/^/[ddl]^{\id_B}
\\
\\
&B&
}
\]
and the counit $\eta: \Dec(\C) \to \C$   
given by the domain functor.
The comonad $\Dec$ induces in the
standard manner, a monad  $\overline{\Dec}$ on the category
$\Cat_\Dec$ of $\Dec$-coalgebras. Theorem~10 of~\cite{GarnerKockWeber} 
then states that 
$\overline{\Dec}$-algebras are precisely unary
operadic categories. A tempting idea is thus to mimic the scheme applied
on the arrow comonad $\Ar$ in the previous sections and {\em define} lax
operadic categories as lax $\overline{\Dec}$-algebras. The diagram
in~(\ref{Pozitri pojedeme na chalupu.2}) will then by expressed by the lax
associativity, cf.~the diagram in~\eqref{V Mercine bude pekna zima.}, 
of the functor $\F : \Dec(\C) \to \C$. 

The  caveat is that it makes sense to speak about lax monadic algebras
only for $2$-monads, otherwise the induced natural transformation 
$\T\phi$ in (ii) of Definition~\ref{definition:lax_algebra} 
is not defined. However, $\Dec$ is, in contrast with the arrow comonad
$\Ar$, only an ordinary comonad, and therefore the induced monad
$\overline{\Dec}$ is also an ordinary monad. Indeed, neither the formula
in~\eqref{Na koncert jede cely sbor vlakem} nor any similar formula
makes sense for $\Dec$. So an ``invisible hand'' leads us to the use of
$\Ar$ instead of $\Dec$. 

Another option could, in  principle, be based on the approach that
characterizes unary operadic categories as those whose simplicial 
nerve $\ttN_\bullet(\C)$ is a
d\'ecalage of a simplicial set $X_\bullet$, cf.~\cite[Section~1]{blob}.
The fiber functor
then materializes in the form of the additional simplicial operators
\begin{equation}
\label{Zabalil jsem ruzne.}
d_{n+1}: \ttN(\C)_n \to \ttN(\C)_{n-1},\ n \geq 1.
\end{equation}
A lax version could then be obtained by replacing the
`strict' simplicial set $X_\bullet$ by a supercoherent structure as
done in Section~\ref{Jarka si koupila novy mobil.} of the present article. 
However, supercoherence requires categorification, so the
operators~(\ref{Zabalil jsem ruzne.}) must be functors on 
the components of the categorified nerve. For $n=1$, this means that
$d_2$, which is the actual fiber functor, acts functorially on the squares
\[
\xymatrix{a \ar[r]^{h_0} \ar[d]_{f}  & c \ar[d]^{g}
\\
b  \ar[r]^{h_1} & d
}\
\]
representing a morphism $(h_0,h_1) \colon f \to g$ in the category
$\ttN_1(\C) = \Ar(\C)$. This brings us back to the arrow category.

\noindent 
{\bf Conclusion.} 
A na\"\i ve approach to lax operadic categories does
not work, since it necessarily implies an 
extended functoriality of the fiber functor. 
The closest notion that could be called a lax unary operadic
category is a lax algebra over the arrow category monad
$\overline{\Ar}$, 
equivalent to a system of kernels. General non-unary lax 
operadic categories should
appear as a generalization of this concept.


\bibliographystyle{plain}

\end{document}